\documentclass[fleqn]{zzz}
\usepackage{mathrsfs}
\usepackage{color}
\usepackage{xcolor}
\usepackage{hyperref}
\usepackage{fancyvrb} 
\usepackage{xparse}
\usepackage{float}

\def\real{{\mathbb R}}
\def\intg{{\mathbb Z}}

\topmargin -1.6cm
\textwidth 18.3cm
\textheight 24cm
\oddsidemargin -0.95cm
\evensidemargin -0.95cm

\hypersetup{
   colorlinks = true,
   urlcolor = blue,
   linkcolor = red
}

\numberwithin{equation}{section} 

\newcommand{\OmeH}[2]{
\Omega_{#1}^{d}
\left(
l_{#1},#2
;\!\begin{array}{c} {\theta_{#1}} \\[3pt] {\theta_{#1}'}
\end{array}
\right)}
\newcommand{\Omeh}[2]{
\Omega_{#1}^{d}
\left(
l,#2
;\!\begin{array}{c} {\theta} \\[0pt] {\theta'}
\end{array}
\right)}

\newcommand{\upsH}[3]{
\Upsilon_{#1}^{#2}
\left(
\begin{array}{c} {n_{#1}} \\ {#3}
\end{array};
{\vartheta_{#1}} 
\right)}
\newcommand{\upSH}[3]{
\Upsilon_{#1}^{#2}
\left(
\begin{array}{c} {n} \\ {#3}
\end{array};
{\vartheta} 
\right)}
\newcommand{\upsHp}[3]{
\Upsilon_{#1}^{#2}
\left(
\begin{array}{c} {n_{#1}} \\ {#3}
\end{array};
{\vartheta_{#1}'} 
\right)}

\newcommand{\PsiH}[3]{
\Psi_{#1}^{#2}
\left(
\begin{array}{c} {n_{#1}} \\ {#3}
\end{array};
\begin{array}{c} {\vartheta_{#1}} \\[5pt] {\vartheta_{#1}'}
\end{array}
\right)}
\newcommand{\PsIH}[3]{
\Psi_{#1}^{#2}
\left(
\begin{array}{c} {n} \\ {#3}
\end{array};
\begin{array}{c} {\vartheta} \\ {\vartheta'}
\end{array}
\right)}


\newcommand{\rrRRp}{\left(\frac{rr'}{RR'}\right)^p}
\newcommand{\pth}{\left(\frac{r_>^2-r_<^2}{2rr'}\right)^{p+\frac32}}
\newcommand{\pdh}{\left(\frac{r_>^2-r_<^2}{2rr'}\right)^{p+\frac{d-1}{2}}}

\newcommand{\hyp}[5]{{}_{#1}F_{#2}\!\left(\genfrac{}{}{0pt}{}{#3}{#4};#5\right)}

\newcommand{\bfx}{{\bf x}}
\newcommand{\bfxp}{{{\bf x}^\prime}}
\newcommand{\wbfx}{\widehat{\bf x}}
\newcommand{\wbfxp}{{\widehat{\bf x}^\prime}}
\newcommand{\N}{{\mathbb N}}

\newcommand{\Q}{{\mathbb Q}}
\newcommand{\R}{{\mathbb R}}
\newcommand{\Si}{{\mathbf S}}

\newcommand{\li}{{\mathfrak l}}

\newcommand{\Z}{{\mathbb Z}}
\newcommand{\C}{{\mathbb C}}
\newcommand{\expe}{{\mathrm e}}
\newcommand{\const}{{\mathrm{const}}}

\newcommand{\fdt}{{d/2}}
\newcommand{\fdf}{{d/4}}
\newcommand{\gdt}{{\frac{d}{2}}}

\newcommand{\mcM}{{\mathcal M}}
\newcommand{\mcN}{{\mathcal N}}
\newcommand{\mcL}{{\mathcal L}}

\newcommand{\mcg}{{\mathcal G}}

\newcommand{\hii}{{\mathfrak h}}
\newcommand{\jii}{{\mathfrak j}}




\newtheorem{thm}[lemma]{Theorem}
\newtheorem{cor}[lemma]{Corollary}
\newtheorem{lem}[lemma]{Lemma}
\newtheorem{rem}[lemma]{Remark}

\numberwithin{equation}{section}
\numberwithin{corollary}{section}
\numberwithin{remark}{section}
\numberwithin{theorem}{section}
\numberwithin{lemma}{section}

\begin{document}

\renewcommand{\PaperNumber}{***}

\FirstPageHeading

\ShortArticleName{Binomial and logarithmic polyharmonic expansions and addition theorems}

\ArticleName{Gegenbauer expansions and addition theorems for a binomial and logarithmic fundamental solution of the 
even-dimensional Euclidean polyharmonic equation}

\Author{Howard S.~Cohl$^\ast$, Jessie E.~Hirtenstein$^\dag$,
Jim Lawrence\,$^{\ddag\S}$
and Lisa Ritter\,$^\S$}

\AuthorNameForHeading{H.~S.~Cohl, J.~E.~Hirtenstein, J.~Lawrence, L.~Ritter}


\Address{$^\dag$~Applied and Computational Mathematics Division,
National Institute of Standards and Technology,
Mission Viejo, CA 92694, USA
\URLaddressD{\href{http://www.nist.gov/itl/math/msg/howard-s-cohl.cfm}{http://www.nist.gov/itl/math/msg/howard-s-cohl.cfm}}
} 
\EmailD{howard.cohl@nist.gov} 

\Address{$^\dag$~Department of Physics, University of California, Davis, CA 95616, USA}
\EmailD{jhirtenstein@ucdavis.edu} 

\Address{$^\ddag$~Department of Mathematical Sciences, 
George Mason University, Fairfax, VA 22030, USA}
\EmailD{lawrence@gmu.edu}

\Address{$^\S$~Applied and Computational Mathematics Division,
National Institute of Standards and Technology,
Gaithersburg, MD 20878, USA
} 
\EmailD{lisa.ritter@nist.gov} 

\ArticleDates{Received ???, in final form ????; Published online ????}

\Abstract{On even-dimensional Euclidean space for integer powers of the Laplace operator greater than
or equal to half the dimension, a fundamental solution of the polyharmonic
equation has binomial and logarithmic behavior. Gegenbauer polynomial expansions of these fundamental
solutions are obtained through a limit  applied to Gegenbauer expansions of a power-law 
fundamental solution of the polyharmonic equation. This limit is accomplished through parameter differentiation. By combining these results with previously 
derived azimuthal Fourier series expansions for these binomial and logarithmic fundamental 
solutions, we are able to obtain addition theorems for the azimuthal Fourier coefficients. 
These logarithmic and binomial addition theorems are expressed in Vilenkin polyspherical geodesic polar coordinate systems and as well in generalized Hopf coordinates on 
spheres in arbitrary even dimensions.
}

\Keywords{
logarithmic fundamental solutions; even dimensional Cartesian space;
Gegenbauer polynomial expansions}

\Classification{35A08, 31B30, 31C12, 33C05, 42A16}

\section{Introduction}
\label{Introduction}

Analysis of the polyharmonic operators (natural powers of the Laplace operator)
are ubiquitous in many areas of pure
and applied mathematics and as well in
physics and engineering problems.
Here we concern ourselves with a fundamental solution of the 
polyharmonic equation (Laplace, biharmonic, etc.), which is connected
to solutions of the 
inhomogeneous polyharmonic equation.  
Solutions to inhomogeneous polyharmonic equations are useful in many 
physical applications including those areas related to Poisson's equation
such as Newtonian gravity, electrostatics, magnetostatics, quantum
direct and exchange interactions
\cite[\S 1]{CohlDominici}, etc.
Furthermore, applications of higher-powers of the Laplace operator
include such varied areas as minimal surfaces \cite{MonterdeUgail}, continuum 
mechanics \cite{LaiRubinKrempl},
mesh deformation \cite{Helenbrook}, elasticity \cite{LurieVasiliev},
Stokes flow \cite{Kirby}, geometric design \cite{Ugail},
cubature formulae \cite{Sobolev}, mean value theorems 
(cf.~Pizzetti's formula) \cite{Nicolescu}, and Hartree-Fock calculations 
of nuclei \cite{Vautherin}.

Closed-form expressions for the Fourier expansions of a logarithmic fundamental 
solution for the polyharmonic equation are extremely useful when solving
inhomogeneous polyharmonic problems on even-dimensional Euclidean space, especially
when a degree of rotational symmetry is involved.  
A fundamental solution of the polyharmonic equation on $d$-dimensional 
Euclidean space $\R^d$ has two arguments and therefore maps from a $2d$-dimensional 
space to the reals.  
Solutions to the inhomogeneous polyharmonic equation can be 
obtained by convolution of a fundamental solution with an integrable source distribution.
Eigenfunction decompositions of a fundamental solution reduces the dimension of the 
resulting convolution integral to obtain Dirichlet boundary values in order to solve 
the resulting elliptic system, replacing it instead by a sum or an integral over some 
parameter space.  By taking advantage of rotational or nearly rotational symmetry in 
the azimuthal Fourier decomposition of the source distribution, one reduces the 
dimensionality of the resulting convolution integral and obtains a rapidly convergent 
Fourier cosine expansion.  In the case of an axisymmetric (constant angular dependence) 
source distribution, the entire contribution to the boundary values are determined by a 
single term in the azimuthal Fourier series.  These kinds of expansions have been 
previously shown to be extremely effective in solving inhomogeneous problems (see for 
instance the discussion \cite{CT} and those papers which cite it).  

It is well-known (see for 
\cite[p.~45]{Schw}, \cite[p.~202]{GelfandShilov})
that a fundamental solution of the polyharmonic equation
on $d$-dimensional Euclidean space $\R^d$ is given by combinations of 
power-law and logarithmic functions of the global distance between two points.
In \cite{CohlDominici},
the Fourier coefficients of a power-law fundamental solution
of the polyharmonic equation were obtained. Gegenbauer and Jacobi polynomial 
expansions of a power-law fundamental solution 
of the polyharmonic equation (which generalize the Fourier expansions presented in \cite{CohlDominici}) were obtained
in \cite{Cohl12pow}.
The coefficients of these expansions
are seen to be given in terms of associated Legendre functions.
Fourier expansions of a logarithmic and binomial
fundamental solution of the polyharmonic
equation were obtained in
\cite{CohllogFourier}.

The work presented in this manuscript
is concerned with computing the Gegenbauer  
coefficients of binomial and logarithmic fundamental solutions of the 
polyharmonic equation.  One obtains a logarithmic fundamental 
solution for the polyharmonic equation only on even-dimensional
Euclidean space and only when the power of the Laplace operator 
is greater than or equal to the dimension divided by two. 
The most familiar example of a logarithmic fundamental solution of
the polyharmonic equation occurs in two-dimensions, for a 
single-power of the Laplace operator, i.e., Laplace's equation.
In this manuscript we present an approach for obtaining the Gegenbauer expansion
of binomial and logarithmic fundamental solutions of the polyharmonic equation by 
parameter differentiation of a power-law fundamental solution of the polyharmonic
equation.

This manuscript is organized as follows.  
In Section 
\ref{Specialfunctionsandorthogonalpolynomials} we
introduce the fundamental mathematical sets, sequences, functions and orthogonal polynomials,
which are necessary to understand the mathematical details of this manuscript.
In Section \ref{Prelude} we describe the properties
of binomial and logarithmic fundamental solutions of the polyharmonic
equation in even-dimensional Euclidean space 
$\R^d$---and in particular in rotationally-invariant coordinate systems and in Vilenkin polyspherical coordinates.
In Section \ref{GegTnexpansions}, we derive
using a limit-derivative approach, the
Fourier cosine and Gegenbauer polynomial expansions
for the kernels which occur in binomial
and logarithmic fundamental solutions of the 
polyharmonic equation in Euclidean space. These
kernels are $(z-x)^p$ and $(z-x)^p\log(z-x)$,
where $z>1$, $x\in(-1,1)$ and $p\in\N_0$.
In Section \ref{FundExpansions} we use the results
presented in the previous sections to obtain the azimuthal 
Fourier and Gegenbauer expansions for binomial
and logarithmic fundamental solution of the 
polyharmonic equation
in rotationally-invariant coordinate systems
in even-dimensional Euclidean space.
In Section \ref{AdditionthmsinVilenkinspolysphericalcoordinates}
we derive addition theorems for the azimuthal
Fourier coefficients for the binomial and
logarithmic fundamental solutions
of the polyharmonic equation in
of even-dimensional Euclidean space.

\section{{Preliminaries}}
\label{Specialfunctionsandorthogonalpolynomials}

{
Here we introduce some nomenclature which
we will rely upon in the text below.
Throughout this paper we adopt the following set notations:
$\mathbb N_0:=\{0\}\cup\mathbb N=\{0, 1, 2, 3, \ldots\}$, and
we use the sets $\Z$, $\R$, $\mathbb C$ to represent the integers, real and complex
numbers respectively.
As is the common convention for associated Legendre 
functions \cite[(8.1.1)]{Abra}, for any expression of the 
form $(z^2-1)^\alpha$, read this as 
$(z^2-1)^\alpha:=(z+1)^\alpha(z-1)^\alpha$,
for any fixed $\alpha\in\mathbb C$ and $z\in\mathbb C\setminus(-\infty,1]$.
Over the set of complex numbers, we assume that empty sum vanishes and the empty product is unity.
Given two numbers: $r,r'\in\R$, define
\begin{equation}
r_{\lessgtr} := \substack{\text{min} \\ \text{max}} \{ r, r' \}.
\end{equation}
}

The harmonic number $H_n\in\Q$, $n\in\Z$, given by 
\begin{equation}
H_n:=\left\{\begin{array}{ll} 0,\quad &\mbox{if}\  n\le 0,\\[0.2cm]
{\displaystyle \sum_{k=1}^n\frac1k}, \quad&\mbox{if}\ n \ge 1.
\end{array}\right.
\label{harmnum}
\end{equation}
The harmonic number is related to the polygamma function \cite[(5.2.2)]{NIST:DLMF} with $n\in\N_0$ since
\cite[p.~14]{MOS}
\[
\psi(n+1)=-\gamma+H_n,
\]
where $\gamma\approx 0.57721\ldots$ is the Euler-Mascheroni constant \cite[(5.2.3)]{NIST:DLMF}.\medskip

The Pochhammer symbol is defined for $a\in\mathbb C$, $n\in{\mathbb N}$,
such that
\begin{equation}
\label{2:1}
(a)_0:=1,\quad (a)_n:=(a)(a+1)\cdots(a+n-1).
\end{equation}
One also has
\begin{equation}
(-p)_n=\left\{
\begin{array}{cl}
{\displaystyle \frac{(-1)^pp!}{(p-n)!}} & \mbox{if}\ 0\le n\le p,\\[0.4cm]
0 & \mbox{if}\ n\ge p+1,
\end{array}
\right.
\label{negintpoch}
\end{equation}
Note for $z\in\C$, $k\in\N_0$, the binomial coefficient can be given in terms of the Pochhammer
symbol as follows \cite[(1.2.6)]{NIST:DLMF}
\begin{equation}
\label{binomz}
\binom{z}{k}=\frac{(-1)^k (-z)_k}{k!}.
\end{equation}

The Gauss hypergeometric function 
${}_2F_1:\C\times\C\times(\C\setminus-\N_0)\times
\C\setminus[1,\infty)$ 
\cite[Chapter 15]{NIST:DLMF} is defined as
\[
\hyp21{a,b}{c}{z}
=\sum_{n=0}^\infty \frac{(a)_n(b)_n}{(c)_n}\frac{z^n}{n!},
\]
for $|z|<1$ and through analytical continuation for the rest of its domain.
If one takes $b=c$ in the Gauss hypergeometric function, 
one produces the binomial theorem \cite[(15.4.6)]{NIST:DLMF}
\begin{equation}
\hyp10{a}{-}{z}=(1-z)^{-a}.
\label{binomthm}
\end{equation}
Associated Legendre functions of the first and second kind
$P_\nu^\mu,Q_\nu^\mu:\C\setminus(-\infty,1]\to\C$
\cite[(14.3.6-7) and Section 14.21]{NIST:DLMF} are defined as
\begin{equation}
P_\nu^\mu(z):=\frac{1}{\Gamma(1-\mu)}\left(\frac{z+1}{z-1}\right)^{\frac{\mu}{2}}
\hyp21
{-\nu,\nu+1}{1-\mu}
{\frac{1-z}{2}},
\label{associatedLegendrefunctionP}
\end{equation}
for $|1-z|<2$,
\begin{equation}
Q_\nu^\mu(z):=\frac{\sqrt{\pi}\expe^{i\pi\mu}\Gamma(\nu+\mu+1)(z^2-1)^{\frac{\mu}{2}}}
{2^{\nu+1}\Gamma(\nu+\frac32)z^{\nu+\mu+1}}
\hyp21
{
\frac{\nu+\mu+1}{2},
\frac{\nu+\mu+2}{2}}
{\nu+\frac32}
{\frac{1}{z^2}},
\label{defnQLegendre}
\end{equation}
for $|z|>1$, and elsewhere in $z$ by analytic continuation
of the 
Gauss hypergeometric function.
A property of the associated Legendre functions that we will take advantage of 
are the Whipple formulae \cite[(14.9.16-17)]{NIST:DLMF} (these are equivalent to each other)
\begin{eqnarray}
&&
Q_\nu^\mu(z)=
\sqrt{\frac{\pi}{2}}\expe^{i\mu\pi}\Gamma(\nu+\mu+1)(z^2-1)^{-\frac14}
P_{-\mu-\frac12}^{-\nu-\frac12}\left(\frac{z}{\sqrt{z^2-1}}\right),\label{Whip1}\\[0.0cm]
&& P_\nu^\mu(z)=
i\sqrt{\frac{2}{\pi}}
\frac{\expe^{i\nu\pi}(z^2-1)^{-\frac14}}
{\Gamma(-\nu-\mu)}
Q_{-\mu-\frac12}^{-\nu-\frac12}\left(\frac{z}{\sqrt{z^2-1}}\right),
\label{Whip2}
\label{whipple}
\end{eqnarray}
for $\Re z>0$.
These allow one to convert between the 
associated Legendre functions of the first and second kind. We also take advantage of 
\cite[(14.9.14)]{NIST:DLMF}
\begin{equation}
Q_\nu^{-\mu}(z)=\expe^{-2i\pi\mu}\frac{\Gamma(\nu-\mu+1)}{\Gamma(\nu+\mu+1)}Q_\nu^\mu(z).
\label{LegendreQmorder}
\end{equation}
Some useful special cases are (cf.~\cite[(14.5.17)]{NIST:DLMF})
\begin{equation}
Q_{\frac12}^\frac12(z)=\frac{i\sqrt{\pi/2}}{(z^2-1)^{\frac14}(z+\sqrt{z^2-1})},
\label{Qhh}
\end{equation}
and
\begin{equation}
Q_{n+\frac12}^\frac32(z)=\frac{
-i\sqrt{\pi/2}(z+(n+1)\sqrt{z^2-1})}
{(z^2-1)^{\frac34}(z+\sqrt{z^2-1})^{n+1}},
\label{Qnhnp1}
\end{equation}
which follows from \cite[(14.10.6)]{NIST:DLMF} and \cite[(14.5.17)]{NIST:DLMF}.
One also has
\begin{equation}
Q_{\mu-\frac12}^{\mu+\frac12}(z)=\frac{\expe^{i\pi(\mu+\frac12)}2^{\mu-\frac12}\Gamma(\mu+\frac12)}
{(z^2-1)^{\frac{\mu}{2}+\frac14}},
\label{Qmhmph}
\end{equation}
where in this formula $\mu\in\C\setminus\{-\frac12,-1,-\frac32,\ldots\}$,
which follows using \eqref{defnQLegendre} and the binomial theorem \eqref{binomthm}.
Let $p,m\in\N_0$.

In regard to parameter differentiation of associated Legendre functions of the first kind, then 
Szmytkowski derived \cite[cf.~(5.12)]{Szmy1} 
\begin{eqnarray}
&&\hspace{-1.6cm}\left[\frac{\partial}{\partial\nu}P_\nu^m(z)\right]_{\nu=p}\nonumber\\[0.2cm]
&&\hspace{-0.5cm}=\left\{
\begin{array}{ll}
\hspace{-0.0cm}
{\displaystyle \left(
\log\frac{z+1}{2}+2H_{2p}-H_p-H_{p-m}\right)P_p^m(z)}&\\[0.4cm]
\hspace{0.5cm}{}+{\displaystyle (-1)^{p+m}\sum_{k=0}^{p-m-1}(-1)^k\frac{(2k+2m+1)
\left[1+\frac{k!(p+m)!}{(k+2m)!(p-m)!}\right]
}{(p-m-k)(p+m+k+1)}P_{k+m}^m(z) }&\\[0.4cm]
\hspace{0.5cm}{}+{\displaystyle (-1)^{p}\frac{(p+m)!}{(p-m)!}\sum_{k=0}^{m-1}\frac{(-1)^k(2k+1)}{(p-k)(p+k+1)}
P_k^{-m}(z)},& \mbox{if}\ 0\le m\le p,\\[0.6cm]
\hspace{0.0cm}(-1)^{p+m+1}(p+m)!(m-p-1)!P_p^{-m}(z), & \mbox{if}\ m\ge p+1.
\end{array}
\right.
\label{limitderivdegreepnum}
\end{eqnarray}
One interesting consequence of Szmytkowski's formula is the following corollary which does not seem to have appeared elsewhere in the literature.
\begin{cor}
Let $p,m\in\N_0$, $0\le m\le p$, $z\in\C\setminus(-\infty,1]$.  Then
\begin{eqnarray}
&&\left[\frac{\partial}{\partial\nu}Q_{m-\frac12}^{\nu+\frac12}(z)\right]_{\nu=p}
=\left[2H_{2p}-H_p-\gamma+i\pi+\log\frac{z+\sqrt{z^2-1}}{2\sqrt{z^2-1}}\right]
Q_{m-\frac12}^{p+\frac12}(z)\nonumber\\[0.2cm]
&&\hspace{1cm}+(p-m)!\sum_{k=0}^{p-m-1}\frac{(2k+2m+1)\left[1+\frac{k!(p+m)!}{(k+2m)!(p-m)!}\right]}
{k!(p-m-k)(p+m+k+1)}Q_{m-\frac12}^{k+m+\frac12}(z)\nonumber\\[0.2cm]
&&\hspace{1cm}+(p+m)!\sum_{k=0}^{m-1}\frac{2k+1}{(k+m)!(p-k)(p+k+1)}Q_{m-\frac12}^{k+\frac12}(z).
\end{eqnarray}
\end{cor}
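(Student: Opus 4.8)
The plan is to transport Szmytkowski's formula \eqref{limitderivdegreepnum} to the statement by means of the Whipple transformations \eqref{Whip1}--\eqref{Whip2}. Write $w:=z/\sqrt{z^2-1}$; then $w^2-1=(z^2-1)^{-1}$, so that $w/\sqrt{w^2-1}=z$ and $(w^2-1)^{-\frac14}=(z^2-1)^{\frac14}$ (principal branches, $\Re z>0$). Applying \eqref{Whip1} with Legendre degree $m-\tfrac12$ and order $\lambda+\tfrac12$, the degree-reflection symmetry $P_{-\lambda-1}^{\mu}=P_{\lambda}^{\mu}$ (immediate from \eqref{associatedLegendrefunctionP} and the symmetry of ${}_2F_1$ in its first two parameters), and the order relation $P_{\lambda}^{-m}(w)=\frac{\Gamma(\lambda-m+1)}{\Gamma(\lambda+m+1)}P_{\lambda}^{m}(w)$ valid for $m\in\N_0$, I would first record the two auxiliary identities
\[
Q_{m-\frac12}^{\lambda+\frac12}(z)=\sqrt{\tfrac{\pi}{2}}\,\expe^{i(\lambda+\frac12)\pi}(z^2-1)^{-\frac14}\Gamma(m+\lambda+1)\,P_{\lambda}^{-m}(w)=\sqrt{\tfrac{\pi}{2}}\,\expe^{i(\lambda+\frac12)\pi}(z^2-1)^{-\frac14}\Gamma(\lambda-m+1)\,P_{\lambda}^{m}(w),
\]
the second being valid whenever $\lambda-m+1$ is not a nonpositive integer. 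Since both sides of the corollary are analytic on $\C\setminus(-\infty,1]$, it suffices to establish the identity for $\Re z>0$ and extend by analytic continuation.

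Next I would set $\lambda=\nu$ in the reduced (second) identity and differentiate in $\nu$ at $\nu=p$. Writing $A(\nu):=\sqrt{\pi/2}\,\expe^{i(\nu+\frac12)\pi}(z^2-1)^{-\frac14}\Gamma(\nu-m+1)$, the product rule gives
\[
\Bigl[\tfrac{\partial}{\partial\nu}Q_{m-\frac12}^{\nu+\frac12}(z)\Bigr]_{\nu=p}=\tfrac{A'(p)}{A(p)}\,Q_{m-\frac12}^{p+\frac12}(z)+A(p)\Bigl[\tfrac{\partial}{\partial\nu}P_{\nu}^{m}(w)\Bigr]_{\nu=p},\qquad \tfrac{A'(p)}{A(p)}=i\pi+\psi(p-m+1)=i\pi-\gamma+H_{p-m}.
\]
Substituting Szmytkowski's formula \eqref{limitderivdegreepnum} in the case $0\le m\le p$ for the last bracket, its logarithmic coefficient is $\log\frac{w+1}{2}+2H_{2p}-H_p-H_{p-m}$; the term $-H_{p-m}$ cancels the $+H_{p-m}$ coming from $A'(p)/A(p)$, and since $\tfrac12(w+1)=\frac{z+\sqrt{z^2-1}}{2\sqrt{z^2-1}}$ and $A(p)P_p^m(w)=Q_{m-\frac12}^{p+\frac12}(z)$, this produces exactly the first line of the asserted formula.

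It then remains to convert the two finite sums in \eqref{limitderivdegreepnum} back to Legendre functions of the second kind. For the first sum I would use the reduced identity with $\lambda=k+m$, so that $\Gamma(\lambda-m+1)=k!$ and, tracking the phase $\expe^{i(\cdots)\pi}=(-1)^{(\cdots)}$, $A(p)P_{k+m}^m(w)=(-1)^{p+k+m}\frac{(p-m)!}{k!}Q_{m-\frac12}^{k+m+\frac12}(z)$; combined with the prefactor $(-1)^{p+m}(-1)^k$ in \eqref{limitderivdegreepnum} all signs cancel, giving the second line. For the last sum the relevant degrees satisfy $k\le m-1$, so $\Gamma(k-m+1)$ is singular and I would instead use the \emph{un}reduced (first) identity with $\lambda=k$, giving $A(p)P_k^{-m}(w)=(-1)^{p+k}\frac{(p-m)!}{(m+k)!}Q_{m-\frac12}^{k+\frac12}(z)$; again the signs cancel and the factor $\frac{(p+m)!}{(p-m)!}\cdot(p-m)!=(p+m)!$ yields the third line, completing the proof.

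The main obstacle is bookkeeping rather than conceptual: one must keep the branch of $(z^2-1)^{\pm1/4}$ and the phases $\expe^{i(\cdots)\pi}$ consistent under the substitution $z\leftrightarrow w$, and apply the order and degree relations for $P_{\nu}^{\pm m}$ only where they are regular --- hence the split into the reduced form (used for the main term and the first sum, where the degrees are $\ge m$) and the unreduced Whipple form (used for the second sum, where the degrees are $<m$). The only non-elementary point is the final analytic-continuation argument, carrying the identity from the Whipple range $\Re z>0$ to all of $\C\setminus(-\infty,1]$.
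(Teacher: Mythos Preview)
Your proposal is correct and follows essentially the same route as the paper's proof: both link $Q_{m-\frac12}^{\nu+\frac12}$ to $P_\nu^{\pm m}$ via the Whipple formulae, differentiate in $\nu$ at $\nu=p$ by the product rule, insert Szmytkowski's formula \eqref{limitderivdegreepnum}, and then convert the resulting $P$-terms back to $Q$-terms. The only organizational difference is that the paper starts from \eqref{Whip2} written as $P_\nu^m(z)=c(\nu)\,Q_{m-\frac12}^{\nu+\frac12}(z/\sqrt{z^2-1})$, differentiates, and afterward applies the substitution $z\mapsto z/\sqrt{z^2-1}$, whereas you invert first (via \eqref{Whip1}) to write $Q_{m-\frac12}^{\nu+\frac12}(z)=A(\nu)\,P_\nu^m(w)$ with $w=z/\sqrt{z^2-1}$ and differentiate directly; your explicit split between the ``reduced'' and ``unreduced'' Whipple forms plays the same role as the paper's combined use of \eqref{Whip2} and \eqref{LegendreQmorder}.
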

\begin{proof}
Starting with \eqref{Whip2} for $\mu=m\in\N_0$, and the reflection formula for the gamma function
\cite[(5.5.3)]{NIST:DLMF}, we have
\[
P_\nu^m(z)
=\frac{-i}{(z^2-1)^\frac14}\sqrt{\frac{2}{\pi}}
\frac{\expe^{-i\pi\nu}}{\Gamma(\nu-m+1)}
Q_{m-\frac12}^{\nu+\frac12}\left(\frac{z}{\sqrt{z^2-1}}\right).
\]
Then replace
\begin{eqnarray}
&&\hspace{-1.2cm}\left[\frac{\partial}{\partial\nu}P_\nu^m(z)\right]_{\nu=p}=
\frac{-i}{(z^2-1)^\frac14}\sqrt{\frac{2}{\pi}}
\left[
\frac{\partial}{\partial\nu}
\frac{\expe^{-i\pi\nu}}{\Gamma(\nu-m+1)}
Q_{m-\frac12}^{\nu+\frac12}\left(\frac{z}{\sqrt{z^2-1}}\right)\right]_{\nu=p}\nonumber\\[0.2cm]
&&\hspace{-0.9cm}=\frac{-i}{(z^2-1)^\frac14}\sqrt{\frac{2}{\pi}}
\left\{
\left[\frac{\partial}{\partial\nu}
\frac{\expe^{-i\pi\nu}}{\Gamma(\nu-m+1)}\right]_{\nu=p}
Q_{m-\frac12}^{p+\frac12}\left(\frac{z}{\sqrt{z^2-1}}\right)+
\frac{(-1)^p}{(p-m)!}
\left[\frac{\partial}{\partial\nu}
Q_{m-\frac12}^{\nu+\frac12}\left(\frac{z}{\sqrt{z^2-1}}\right)\right]_{\nu=p}
\right\},\nonumber
\end{eqnarray}
with
\[
\left[\frac{\partial}{\partial\nu}
\frac{\expe^{-i\pi\nu}}{\Gamma(\nu-m+1)}\right]_{\nu=p}=\frac{(-1)^{p+1}}{(p-m)!}\left(i\pi+\psi(p-m+1)\right),
\]
in \eqref{limitderivdegreepnum} for $0\le m\le p$, together with \eqref{Whip2}, \cite[(14.9.14)]{NIST:DLMF}, 
and applying the map $z\mapsto \frac{z}{\sqrt{z^2-1}}$
\cite[Appendix A]{Cohlonpar},
completes the proof.
Unfortunately, applying this same method for $m\ge p+1$ does not seem to produce an 
analogous result.
\end{proof}

\noindent On the other hand, using the map $z\mapsto\frac{z}{\sqrt{z^2-1}}$, \eqref{limitderivdegreepnum},
and \eqref{Whip1}, produces the following useful relation
\begin{eqnarray}
&&\hspace{-0.2cm}\left[\frac{\partial}{\partial\nu}P_\nu^m\left(\frac{z}{\sqrt{z^2-1}}\right)\right]_{\nu=p}
=i\sqrt{\frac{2}{\pi}}(z^2-1)^{\frac14}
\nonumber\\[0.2cm]
&&\hspace{0.4cm}\times\left\{
\begin{array}{ll}
\hspace{-0.0cm}
{\displaystyle \frac{(-1)^{p+1}}{(p-m)!}
\left(
\log\frac{z+\sqrt{z^2-1}}{2\sqrt{z^2-1}}+2H_{2p}-H_p-H_{p-m}\right)Q_{m-\frac12}^{p+\frac12}(z)}&\\[0.4cm]
\hspace{0.5cm}{}+{\displaystyle 
(-1)^{p+1}\sum_{k=0}^{p-m-1}\frac{(2k+2m+1)
}{k!(p-m-k)(p+m+k+1)}
\left[1+\frac{k!(p+m)!}{(k+2m)!(p-m)!}\right]
Q_{m-\frac12}^{k+m+\frac12}(z) }&\\[0.4cm]
\hspace{0.5cm}{}+{\displaystyle 
\frac{(-1)^{p+1}(p+m)!}{(p-m)!}
\sum_{k=0}^{m-1}\frac{2k+1}{(k+m)!(p-k)(p+k+1)}
Q_{m-\frac12}^{k+\frac12}(z)},& \mbox{if}\ 0\le m\le p,\\[0.6cm]
\hspace{0.0cm}(-1)^{m}(m-p-1)!\ Q_{m-\frac12}^{p+\frac12}(z), & \mbox{if}\ m\ge p+1.
\end{array}
\right.
\label{limitderivdegreeqnum}
\end{eqnarray}

One main orthogonal polynomial that we use in this manuscript is the Gegenbauer polynomial.
The Gegenbauer polynomial $C_n^\nu:\C\to\C$ can be defined by
\begin{equation}
C_n^\nu(x):=\frac{(2\nu)_n}{n!}
\hyp21{-n,n+2\nu}{\nu+\frac12}{\frac{1-x}{2}},
\label{gegpolydefn}
\end{equation}
where $n\in\N_0$, $\nu\in(-\frac12,\infty)\setminus\{0\}.$
{The Gegenbauer polynomial can also be written in terms of the Ferrers function of the first kind (associated Legendre function of the first kind on-the-cut) \cite[(18.11.1)]{NIST:DLMF}
\begin{align} \label{JacobiGeg}
    C_{n-m}^{m+\frac12}(x) =
    \frac{1}{(\frac12)_m(-2)^m(1-x^2)^{\frac{m}{2}}} {\sf P}_{n}^m (x),
\end{align}
where $x\in(-1,1)$, $n,m\in\N_0$ such that 
$0\le m\le n$.
}
{
The Ferrers function of the first kind is
defined as \cite[(14.3.1)]{NIST:DLMF}
\begin{equation}
{\sf P}_\nu^\mu(x)=\frac{1}{\Gamma(1-\mu)}
\left(\frac{1+x}{1-x}\right)^{\frac12\mu}
\hyp21{-\nu,\nu+1}{1-\mu}{\frac{1-x}{2}},
\end{equation}
where $\nu,\mu\in\C$, $x\in\C\setminus((-\infty,-1]\cup[1,\infty))$.
}
One may consider the limit as $\mu\to 0$ of the Gegenbauer polynomial
\cite[(6.4.13)]{AAR}
\begin{equation}
\lim_{\mu\to 0}\frac{{n}+\mu}{\mu}C_{n}^\mu(x)=\epsilon_{n} T_{n}(x)
\label{limitGegChebyI}
\end{equation}
where $\epsilon_{n}=2-\delta_{{n},0}$ is the Neumann factor, commonly
appearing in Fourier cosine series.
The Chebyshev polynomial of the first kind $T_{n}:\C\to\C,$ is defined by
\[
T_n(x):=
\hyp21{-n,n}{\frac12}{\frac{1-x}{2}},
\]
with the useful Fourier cosine representation
$
T_n(\cos\theta)=\cos(n\theta).
$
The Chebyshev polynomial of the second kind $U_n:\C\to\C$, is defined by 
\cite[(18.7.4)]{NIST:DLMF} $U_n(x):=C_n^{1}(x).$

For generalized Hopf coordinates (see Section \ref{Ghopfsec} below) we use Jacobi polynomials which are defined as \cite[(18.5.7)]{NIST:DLMF}
\[
P_n^{(\alpha,\beta)}(z)=\frac{(\alpha+1)_n}{n!}
\hyp21{-n,\alpha+\beta+n+1}{\alpha+1}{\frac{1-z}{2}}.
\]
\begin{thm}
Let $\alpha,\beta,n\in\N_0$, $z\in\C$. Then the Jacobi polynomials have the following symmetry 
in parameter relations
\begin{eqnarray}
&& P_n^{(\alpha,\beta)}(z)=\frac{(\alpha+n)!(\beta+n)!}{n!(\alpha+\beta+n)!}\left(\frac{z-1}{2}\right)^{-\alpha}P_{\alpha+n}^{(-\alpha,\beta)}(z),\label{Jacobinega} \\[0.2cm]
&& P_n^{(\alpha,\beta)}(z)=\frac{(\alpha+n)!(\beta+n)!}{(\alpha+\beta+n)!n!}\left(\frac{z+1}{2}\right)^{-\beta}P_{\beta+n}^{(\alpha,-\beta)}(z), \label{Jacobinegb}
\end{eqnarray}
\end{thm}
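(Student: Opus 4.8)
The plan is to read off both identities from the terminating Gauss hypergeometric representation of the Jacobi polynomial \cite[(18.5.7)]{NIST:DLMF} combined with Euler's transformation \cite[(15.8.1)]{NIST:DLMF}, the only real subtlety being how to interpret a Jacobi polynomial with a negative integer parameter. Since $-n$ is a nonpositive integer, $\hyp21{-n,\alpha+\beta+n+1}{\alpha+1}{w}$ is a polynomial in $w$ (so all the identities below are polynomial identities in $z$, valid on all of $\C$), provided the lower parameter $\alpha+1$ is not in $-\N_0$. In \eqref{Jacobinegb} every Jacobi polynomial that appears has nonnegative-integer first parameter, so this proviso holds throughout and the relation follows cleanly; in \eqref{Jacobinega} the object $P_{\alpha+n}^{(-\alpha,\beta)}$ has hypergeometric representation with lower parameter $1-\alpha$ and with vanishing prefactor $(1-\alpha)_{\alpha+n}$ when $\alpha\ge1$, an indeterminate form that must be resolved by the standard convention $P_m^{(-\alpha,\beta)}(z):=\lim_{a\to-\alpha}P_m^{(a,\beta)}(z)$ (the limit exists because the simple zero of $(a+1)_m$ cancels the poles of $1/(a+1)_k$, $k\ge\alpha$; equivalently, via the explicit binomial-sum formula \cite[(18.5.8)]{NIST:DLMF}).

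To prove \eqref{Jacobinegb}, set $w=\frac{1-z}{2}$ and apply Euler's transformation $\hyp21{a,b}{c}{w}=(1-w)^{c-a-b}\hyp21{c-a,c-b}{c}{w}$ with $a=-n$, $b=\alpha+\beta+n+1$, $c=\alpha+1$, so that $c-a-b=-\beta$, $c-a=\alpha+n+1$, $c-b=-\beta-n$, and $1-w=\frac{z+1}{2}$. This turns $\hyp21{-n,\alpha+\beta+n+1}{\alpha+1}{w}$ into $\left(\frac{z+1}{2}\right)^{-\beta}\hyp21{\alpha+n+1,-\beta-n}{\alpha+1}{w}$; the latter hypergeometric function is exactly $\frac{(\beta+n)!}{(\alpha+1)_{\beta+n}}P_{\beta+n}^{(\alpha,-\beta)}(z)$ by the definition with $n\mapsto\beta+n$ and second parameter $-\beta$ (note $\alpha+(-\beta)+(\beta+n)+1=\alpha+n+1$). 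Multiplying by $\frac{(\alpha+1)_n}{n!}$ and converting Pochhammer symbols to factorials via $(\alpha+1)_n=\frac{(\alpha+n)!}{\alpha!}$ and $(\alpha+1)_{\beta+n}=\frac{(\alpha+\beta+n)!}{\alpha!}$ gives \eqref{Jacobinegb}.

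To prove \eqref{Jacobinega}, with $P^{(-\alpha,\beta)}$ interpreted by the limit above, I would use the reflection symmetry $P_m^{(a,b)}(z)=(-1)^m P_m^{(b,a)}(-z)$ \cite[Table~18.6.1]{NIST:DLMF}, which persists under the limit by continuity in the nonsingular parameter. Thus $P_n^{(\alpha,\beta)}(z)=(-1)^n P_n^{(\beta,\alpha)}(-z)$; applying \eqref{Jacobinegb} with $\alpha$ and $\beta$ interchanged and $z\mapsto-z$ rewrites this as $(-1)^n\frac{(\alpha+n)!(\beta+n)!}{n!(\alpha+\beta+n)!}\left(\frac{1-z}{2}\right)^{-\alpha}P_{\alpha+n}^{(\beta,-\alpha)}(-z)$, and a second application of the reflection symmetry turns $P_{\alpha+n}^{(\beta,-\alpha)}(-z)$ into $(-1)^{\alpha+n}P_{\alpha+n}^{(-\alpha,\beta)}(z)$. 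Since $\alpha\in\Z$ one has $\left(\frac{1-z}{2}\right)^{-\alpha}=(-1)^\alpha\left(\frac{z-1}{2}\right)^{-\alpha}$, the accumulated sign is $(-1)^n(-1)^\alpha(-1)^{\alpha+n}=(-1)^{2(\alpha+n)}=1$, and the factorial prefactor is symmetric under $\alpha\leftrightarrow\beta$; this yields \eqref{Jacobinega}. (Alternatively, evaluating the limit that defines $P_{\alpha+n}^{(-\alpha,\beta)}$ term by term reproduces Szeg\H{o}'s formula $P_N^{(-m,\beta)}(z)=\left(\frac{z-1}{2}\right)^m\frac{(\beta+N)!\,(N-m)!}{(\beta+N-m)!\,N!}P_{N-m}^{(m,\beta)}(z)$ for $1\le m\le N$, which is \eqref{Jacobinega} after putting $m=\alpha$, $N=\alpha+n$.)

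\textbf{Main obstacle.}
The computational content is entirely routine: Euler's transformation together with rewriting Pochhammer symbols as ratios of factorials. The one point requiring genuine care is the degenerate case $\alpha\ge1$ in \eqref{Jacobinega}, where the series definition of $P_{\alpha+n}^{(-\alpha,\beta)}$ literally fails; one must commit to the limiting interpretation (or to \cite[(18.5.8)]{NIST:DLMF}) and verify that it is compatible with the reflection symmetry invoked above. Relation \eqref{Jacobinegb}, by contrast, involves no singular parameter at any stage and is purely mechanical.
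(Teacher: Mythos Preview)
Your argument is correct. For \eqref{Jacobinegb} you apply Euler's transformation directly to the standard representation \cite[(18.5.7)]{NIST:DLMF}, whereas the paper instead starts from the alternative representation \cite[(18.5.8)]{NIST:DLMF} with argument $\frac{z-1}{z+1}$ and makes the substitutions $\beta\mapsto-\beta$, $n\mapsto n+\beta$; both routes are mechanical and equally short. For \eqref{Jacobinega} the paper proceeds symmetrically, using the representation with argument $\frac{z+1}{z-1}$ from \cite[p.~212]{MOS} and the substitutions $\alpha\mapsto-\alpha$, $n\mapsto n+\alpha$, then resolving a $\Gamma$-function indeterminacy via the reflection formula and a $\sin$-ratio limit. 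Your route---deriving \eqref{Jacobinega} from \eqref{Jacobinegb} by two applications of the parity relation $P_m^{(a,b)}(z)=(-1)^mP_m^{(b,a)}(-z)$---is genuinely different and arguably cleaner: it trades the paper's $\sin$-ratio limit for the (equivalent) care about defining $P_{\alpha+n}^{(-\alpha,\beta)}$ as a limit in the first parameter, which you flag correctly. Either way the delicate point is the same object, just encountered at a different stage.
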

\begin{proof}
For \eqref{Jacobinega}, start with \cite[p.~212]{MOS}
\[
P_n^{(\alpha,\beta)}(z)=\binom{n+\beta}{n}\left(\frac{z-1}{2}\right)^n
\hyp21{-n,-n-\alpha}{\beta+1}{\frac{z+1}{z-1}},
\]
replace $\alpha\mapsto-\alpha$, then $n\mapsto n+\alpha$. Then using \eqref{binomz}, and the reflection 
formula \cite[(5.5.3)]{NIST:DLMF}, the final result follows since for $x,y\in\Z$, one has 
\[
\lim_{\epsilon\to0}\frac{\sin(\pi(x+y+2\epsilon))}{\sin(\pi(x+\epsilon))}=\cos(\pi y)=(-1)^y.
\]
For \eqref{Jacobinegb} use \cite[(18.5.8)]{NIST:DLMF}
\[
P_n^{(\alpha,\beta)}(z)=\frac{(\alpha+1)_n}{n!}\left(\frac{z+1}{2}\right)^n
\hyp21{-n,-n-\beta}{\alpha+1}{\frac{z-1}{z+1}},
\]
and replace first $\beta\mapsto-\beta$, and then $n\mapsto n+\beta$.
\end{proof}

\begin{cor}
Let $\alpha,\beta\in\Z$, $n\in\N_0$, $z\in\C$. Then the Jacobi polynomials have the following symmetry 
in parameter relation
\begin{eqnarray}
&& P_n^{(\alpha,\beta)}(z)= \left(\frac{z-1}{2}\right)^{-\alpha}\left(\frac{z+1}{2}\right)^{-\beta}P_{\alpha+\beta+n}^{(-\alpha,-\beta)}(z).\label{Jacobinegab}
\end{eqnarray}
\end{cor}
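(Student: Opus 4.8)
The plan is to derive \eqref{Jacobinegab} by composing the two one-sided symmetry relations \eqref{Jacobinega} and \eqref{Jacobinegb} that were just established in the preceding theorem, rather than by a fresh hypergeometric manipulation. The point is that \eqref{Jacobinega} trades a lower parameter $\alpha$ for $-\alpha$ at the cost of a factor $\bigl(\tfrac{z-1}{2}\bigr)^{-\alpha}$ and a shift $n\mapsto \alpha+n$, while \eqref{Jacobinegb} does the analogous thing for the $\beta$ slot. Applying them in succession should lower both indices and accumulate exactly the product of monomial prefactors appearing on the right-hand side of \eqref{Jacobinegab}, provided the various factorial ratios telescope to unity.

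Concretely, first I would apply \eqref{Jacobinegb} to $P_n^{(\alpha,\beta)}(z)$ to obtain
\[
P_n^{(\alpha,\beta)}(z)=\frac{(\alpha+n)!(\beta+n)!}{(\alpha+\beta+n)!\,n!}\left(\frac{z+1}{2}\right)^{-\beta}P_{\beta+n}^{(\alpha,-\beta)}(z).
\]
Next I would apply \eqref{Jacobinega} to the polynomial $P_{\beta+n}^{(\alpha,-\beta)}(z)$ appearing on the right, with $n$ there replaced by $\beta+n$ and with second parameter $-\beta$; this gives a factor $\bigl(\tfrac{z-1}{2}\bigr)^{-\alpha}$, raises the degree to $\alpha+\beta+n$, and flips the first parameter to $-\alpha$, producing $P_{\alpha+\beta+n}^{(-\alpha,-\beta)}(z)$ together with the factorial ratio $\frac{(\alpha+\beta+n)!\,n!}{(\beta+n)!\,(\alpha+\beta+n)!}$. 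Multiplying the two factorial ratios, one sees the $(\alpha+n)!$, $(\beta+n)!$, $n!$ and $(\alpha+\beta+n)!$ factors cancel in pairs, leaving $1$, and the two monomial prefactors combine to $\bigl(\tfrac{z-1}{2}\bigr)^{-\alpha}\bigl(\tfrac{z+1}{2}\bigr)^{-\beta}$, which is precisely \eqref{Jacobinegab}.

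There is one point requiring a little care: \eqref{Jacobinega} and \eqref{Jacobinegb} were proved only for nonnegative integer parameters, whereas the corollary asserts $\alpha,\beta\in\Z$, so in the composition above I need to check that the intermediate step \eqref{Jacobinega} is being invoked with admissible parameters. In the second application the first parameter is $\alpha\ge 0$ and the degree is $\beta+n\ge 0$, so that step is legitimate; likewise the first application of \eqref{Jacobinegb} uses $\beta\ge 0$. Thus the identity is first obtained for $\alpha,\beta\in\N_0$. To extend it to all $\alpha,\beta\in\Z$ I would note that both sides of \eqref{Jacobinegab}, after clearing the monomial denominators, are polynomials in $z$ whose coefficients are rational (in fact polynomial) functions of $\alpha$ and $\beta$ via the standard hypergeometric-sum representation of $P_n^{(\alpha,\beta)}$, so an identity valid on $\N_0\times\N_0$ propagates to $\Z\times\Z$ by the usual polynomial-identity argument; alternatively one can simply observe that \eqref{Jacobinega}–\eqref{Jacobinegb} themselves continue to hold for negative integer parameters by the same reflection-formula computation used in their proof (the limit $\lim_{\epsilon\to0}\sin(\pi(x+y+2\epsilon))/\sin(\pi(x+\epsilon))=(-1)^y$ is insensitive to the sign of $x,y\in\Z$). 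The main obstacle, such as it is, is purely bookkeeping: making sure the factorial ratios from the two invocations cancel exactly and that the degree shifts add correctly to $\alpha+\beta+n$; there is no genuine analytic difficulty.
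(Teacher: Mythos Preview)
Your proposal is correct and takes essentially the same approach as the paper---composing \eqref{Jacobinega} and \eqref{Jacobinegb}---only in the opposite order (the paper applies \eqref{Jacobinega} first, then \eqref{Jacobinegb}). Minor typo: your second factorial ratio should be $\tfrac{(\alpha+\beta+n)!\,n!}{(\beta+n)!(\alpha+n)!}$, after which the cancellation works exactly as you describe; your discussion of the extension to $\alpha,\beta\in\Z$ is in fact more thorough than the paper's own one-line remark.
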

\begin{proof}
For \eqref{Jacobinegab} use \eqref{Jacobinega} and then substitute \eqref{Jacobinegb} to replace the 
Jacobi polynomial which appears on the right-hand side. Notice that the restriction 
$\alpha,\beta\in\N_0$ is relaxed to $\alpha,\beta\in\Z$ in the resulting expression.
\end{proof}

\section{Binomial and logarithmic kernels for the even dimensional fundamental solution of the polyharmonic equation in Euclidean space}
\label{Prelude}

If $\bfx,\bfxp\in\R^d$ then the Euclidean inner product
$(\cdot,\cdot):\R^d\times\R^d\to\R$ defined by
\begin{equation}
(\bfx,\bfxp):=x_1x_1'+\cdots+x_dx_d',
\end{equation}
induces a norm (the Euclidean norm) $\|\cdot\|:\R^d\to[0,\infty)$, on $\R^d$,
given by $\|\bfx\|:=\sqrt{(\bfx,\bfxp)}$. 
If $\Phi:\R^d\to\R$ satisfies the polyharmonic equation given by
\begin{equation}
(-\Delta)^k\Phi(\bfx)=0,
\label{polyharmoniceq}
\end{equation}
where 
$k\in\N$ and $\Phi\in C^{2k}(\R^d),$
$\bfx\in\R^d,$ and $\Delta:C^p(\R^d)\to C^{p-2}(\R^d)$ for $p\ge 2$, is the
Laplace operator on $\mathbb R^d$ defined by
$\Delta:=\frac{\partial^2}{\partial x_1^2}+\cdots+\frac{\partial^2}{\partial x_d^2}.$
Then $\Phi$ is referred to as 
polyharmonic, and $(-\Delta)^k$ is referred to as the polyharmonic operator. 
If the power $k$ of the Laplace operator equals two, then \eqref{polyharmoniceq} is referred to as the 
biharmonic equation and $\Phi$ is called biharmonic.
The inhomogeneous polyharmonic equation is given by
\begin{equation}
(-\Delta)^k\Phi({\bf x})=\rho({\bf x}),
\label{polyh}
\end{equation}
where we take $\rho$ to be an integrable function so that 
a solution to \eqref{polyh} exists.  A fundamental solution for the 
polyharmonic equation on $\R^d$ is a function 
${\mcg}_k^d:(\R^d\times\R^d)\setminus\{(\bfx,\bfx):\bfx\in\R^d\}\to\R$ 
which satisfies the distributional equation
\begin{equation}
(-\Delta)^k{\mcg}_k^d({\bf x},{\bf x}^\prime)=\delta({\bf x}-{\bf x}^\prime),
\label{unnormalizedfundsolnpolydefn}
\end{equation}
where $\delta$ is the Dirac delta distribution
and $\bfxp\in\R^d$.\medskip


Let $n\in\N$ and define the harmonic number $H_n:=\sum_{k=1}^n\frac{1}{k}$.
Then a fundamental solution of the polyharmonic equation \eqref{polyharmoniceq} 
on Euclidean space $\R^d$ is given by
(see for instance 
\cite[(2.1)]{Boyl},
\cite[Theorem 1]{CohllogFourier},
\cite[Section II.2]{Sobolev})
\begin{equation}
\mcg_k^d({\bf x},{\bf x}^\prime):=
\left\{ \begin{array}{ll}
{\displaystyle \frac{(-1)^{k+\frac{d}{2}+1}\,\|{\bf x}-{\bf x}^\prime\|^{2k-d}}
{(k-1)!\,\left(k-\frac{d}{2}\right)!\,2^{2k-1}\pi^{\frac{d}{2}}}
\left(\log\|{\bf x}-{\bf x}^\prime\|-\beta_{k-\frac{d}{2},d}\right),}
\hspace{0.659cm} \mathrm{if}\  d\,\,\mathrm{even},\ k\ge \frac{d}{2},\\[10pt]
{\displaystyle \frac{\Gamma(\frac{d}{2}-k)\|{\bf x}-{\bf x}^\prime\|^{2k-d}}
{(k-1)!\,2^{2k}\pi^{\frac{d}{2}}}} \hspace{5.82cm} \mathrm{otherwise},
\end{array} \right. 
\label{greenpoly}
\end{equation}
where $\beta_{p,d}\in\Q$ is defined as 
$\beta_{p,d}:=\frac12(H_p+H_{\frac{d}{2}+p-1}-H_{\frac{d}{2}-1}).$

\begin{rem}
\noindent In regard to a logarithmic fundamental solution of the polyharmonic
equation ($d$ even, $k\ge \frac{d}{2}$), note that \cite[Section II.2]{Sobolev} is missing the term proportional
to $\|\bfx-\bfxp\|^{2k-d}$. This term is in the kernel of the
polyharmonic operator $(-\Delta)^k$, so for any constant multiple
of this term $\beta_{p,d}$ may be added to a fundamental solution of the polyharmonic
equation.  Our choice for this constant is given so that
\begin{equation}
-\Delta \mcg_k^d=\mcg_{k-1}^d,
\label{iter}
\end{equation}
is satisfied for all $k> \frac{d}{2}$, and that for $k=\frac{d}{2}$, the constant vanishes.
Boyling's fundamental solution satisfies \eqref{iter} for all $k>\frac{d}{2}$,
but is missing the term proportional to $H_{\frac{d}{2}-1}$,
and therefore only vanishes when $k=\frac{d}{2}$ for $d=2$.  Sobolev does
not include this constant term, so for him $\mcg_k^d$ is purely logarithmic
for all $k\ge \frac{d}{2}$, $d\ge 2$ even. However in that case \eqref{iter} is not 
satisfied for $k>\frac{d}{2}$.
\end{rem}


\medskip

We consider parametrizations of Euclidean space $\R^d$ which are given 
in terms of coordinate systems whose coordinates are curvilinear, i.e.,
based on some transformation which converts the Cartesian coordinates to a coordinate system 
with the same number of coordinates in which the coordinate lines are curved.  We consider 
solutions of the polyharmonic equation \eqref{polyharmoniceq} in a curvilinear coordinate
system, which arises through the theory of separation of variables.  We refer to coordinate 
systems which yield solutions 
through the separation of variables method as separable.  In this manuscript, we restrict our 
attention to separable rotationally-invariant coordinate systems for the polyharmonic 
equation on $\R^d$ which are given by
{
\begin{equation}
\left.
\begin{array}{rcl}
x_1&=&x_1(\xi_1,\ldots,\xi_{d-1})\\[0.1cm]
&\vdots&\\[0.1cm]
x_{d-2}&=&x_{d-2}(\xi_1,\ldots,\xi_{d-1})\\[0.1cm]
x_{d-1}&=&R(\xi_1,\ldots,\xi_{d-1})\cos\phi\\[0.1cm]
x_d&=&R(\xi_1,\ldots,\xi_{d-1})\sin\phi\\[0.1cm]
\end{array}
\quad\right\}.
\label{rotatioanallyinvariant}
\end{equation}
}
These coordinate systems are described by $d$ coordinates: an angle 
$\phi\in\R$ plus $(d-1)$-curvilinear coordinates $(\xi_1,\ldots,\xi_{d-1})$.
Rotationally-invariant coordinate systems parametrize points on the $(d-1)$-dimensional
half-hyperplane given by $\phi=\const$ and $R\ge 0$ using the curvilinear coordinates
$(\xi_1,\ldots,\xi_{d-1})$.
A separable rotationally-invariant coordinate system transforms the polyharmonic equation 
into a set of $d$-uncoupled ordinary differential equations with separation constants 
$m\in\Z$ and $k_j$ for $1\le j\le d-2$.  
For a separable rotationally-invariant 
coordinate system, this uncoupling
is accomplished, in general, by assuming a solution to \eqref{polyharmoniceq} of the form
\[
\Phi(\bfx)=\expe^{im\phi}\,{\mathcal R}(\xi_1,\ldots,\xi_{d-1})\prod_{i=1}^{d-1} 
A_i(\xi_i,m,k_1,\ldots,k_{d-2}),
\]
where the properties of the functions 
${\mathcal R}$ and $A_i$, for $1\le i\le d-1$, and the constants $k_j$ for 
$1\le j\le d-2$, depend on the specific separable rotationally-invariant 
coordinate system in question.
Separable coordinate systems are divided into two separate classes, 
those which are simply separable (${\mathcal R}=\const$), and those which are ${\mathcal R}$-separable.
For an extensive description of the theory of separation of variables
see \cite{Miller}.

The Euclidean distance between two points $\bfx,\bfxp\in\R^d$, expressed in a 
rotationally-invariant coordinate system, is given by
\begin{equation}
\displaystyle \|\bfx-\bfxp\|=\sqrt{2RR^\prime}
\left[\chi-\cos(\phi-\phi^\prime)\right]^{\frac12},
\label{rotationallyinvariantdist}
\end{equation}
where the hypertoroidal parameter $\chi>1$, is given by 
\begin{equation}
\chi:=\chi(R,R',x_1,\ldots,x_{d-2},x_1',\ldots,x_{d-2}'):=\frac{R^{2}+{R^\prime}^2
+{\displaystyle \sum_{k=1}^{d-2}(x_k-x_k^\prime)^2}
}
{\displaystyle 2RR^\prime},
\label{chidefn}
\end{equation}
where $R,R^\prime\in(0,\infty)$ are defined in \eqref{rotatioanallyinvariant}.
The hypersurfaces given by 
$\chi=\const$
are independent of
coordinate system and represent hypertori of revolution. 

One type of coordinate system which parametrizes points in $d$-dimensional Euclidean
space which has a high degree of symmetry are Vilenkin's polyspherical 
coordinates (for a detailed description of Vilenkin's polyspherical coordinates, see \cite[Appendix B and references therein]{Cohl12pow}).
These curvilinear orthogonal coordinate systems are composed of a radius 
$r\in[0,\infty)$ and $(d-1)$ angles which must have domains given in 
$\{[0,\frac12\pi],[0,\pi],[-\pi,\pi)\}$.
Using these coordinate systems, we can also express the distance between 
two points as 
\begin{equation}
\|\bfx-\bfxp\|=\sqrt{2rr'}(\zeta-\cos\gamma)^{\frac12},
\label{sphericallysymmetricdist}
\end{equation}
where $\zeta:[0,\infty)^2\to [1,\infty)$ is defined by 
\begin{equation}
\zeta=\zeta(r,r'):=\frac{r^2+r'^2}{2rr'},
\label{zetadef}
\end{equation}
and the separation angle $\gamma\in[0,\pi]$ is defined through the relation
\begin{equation}
\cos\gamma:=\frac{(\bfx,\bfxp)}{\|\bfx\|\|\bfxp\|}=\frac{(\bfx,\bfxp)}{rr'},
\label{sepang}
\end{equation}
using the Euclidean inner product and norm.
Note that
\begin{equation} \label{zetasquare}
\sqrt{\zeta^2 - 1} = \frac{r_>^2-r_<^2}{2rr'}.
\end{equation}

From \eqref{greenpoly} we see that apart from a multiplicative constant,
the algebraic expression 
of a fundamental solution for the polyharmonic equation 
on Euclidean space $\R^d$ for $d$ even, $k\ge \frac{d}{2},$ is given by
$\li_k^d:(\R^d\times\R^d)\setminus\{(\bfx,\bfx):\bfx\in\R^d\}\to\R$ defined by
\begin{equation}
\li_k^d(\bfx,\bfxp):=\|\bfx-\bfxp\|^{2k-d}\left(\log\|\bfx-\bfxp\|-\beta_{k-\frac{d}{2},d}\right).
\label{liunderscorekd}
\end{equation}
By expressing $\li_k^d$ in a rotationally-invariant   coordinate system
\eqref{rotatioanallyinvariant}
one has the following result.
Let $p=k-d/2\in\N_0$. Then
\begin{eqnarray}
&&\hspace{-0.7cm}\li_k^d(\bfx,\bfxp)=\left(2RR^\prime\right)^p\left(\frac12\log
\left(2RR^\prime\right)-\beta_{p,d}\right)
\left(\chi-\cos(\phi-\phi^\prime) \right)^p\nonumber\\[2pt]
&&\hspace{5cm}+\frac12\left(2RR^\prime\right)^p
\left(\chi-\cos(\phi-\phi^\prime) \right)^p
\log\left(\chi-\cos(\phi-\phi^\prime) \right).
\label{lft1}
\end{eqnarray}
Also, in a Vilenkin polyspherical coordinate system,
one has
\begin{eqnarray}
&&\hspace{-0.7cm}\li_k^d(\bfx,\bfxp)=
\left(2rr^\prime\right)^p\left(\frac12\log
\left(2rr^\prime\right)-\beta_{p,d}\right)
\left(\zeta-\cos\gamma \right)^p+\frac12\left(2rr^\prime\right)^p
\left(\zeta-\cos\gamma\right)^p
\log\left(\zeta-\cos\gamma \right).
\label{lft2}
\end{eqnarray}

For the polyharmonic equation on even-dimensional Euclidean space $\R^d$, if $1\le k \le \frac{d}{2}-1$
then a fundamental solution is given by  
{$\hii_k^d:(\R^d\times\R^d)\setminus\{(\bfx,\bfx):\bfx\in\R^d\}\to\R$ which is a power-law given by
$\hii_k^d(\bfx,\bfxp)=\|\bfx-\bfxp\|^{2k-d}$,
where $2k-d\in-2\N$.
For this range of $k$ values then $2k-d$ is a negative
even integer and this case and all its implications are fully covered by the material presented in 
\cite{Cohl12pow}. }

{
For the case in which a logarithmic
fundamental solution exists, namely 
$k \ge \frac{d}{2}$, then $2k-d\in2\N$ is a positive
even integer and the kernel 
$\jii_k^d:\R^d\times\R^d\to\R$
\begin{equation}
\jii_k^d(\bfx,\bfxp):=\|\bfx-\bfxp\|^{2k-d}.
\label{hkdii}
\end{equation}
corresponds
to a binomial expression $(z-x)^p$, where $p=k-d/2\in\N_0$ and its series expansion either 
in terms of Chebyshev polynomials of the first kind
or in terms of Gegenbauer polynomials truncates
in a finite number of terms.
By expressing $\jii_k^d$ in a rotationally-invariant coordinate system there is
\begin{equation}
\jii_k^d(\bfx,\bfxp)=\left(2RR^\prime\right)^{p}
\left[\chi-\cos(\phi-\phi^\prime) \right]^{p},
\label{unlogfourseries}
\end{equation}
and expressing $\jii_k^d$ in a Vilenkin polyspherical
coordinate system one has 
\begin{equation}
\jii_k^d(\bfx,\bfxp)=\left(2rr^\prime\right)^{p}
\left[\zeta-\cos\gamma \right]^{p},
\label{unlogVilen}
\end{equation}
where $p=k-d/2\in\N_0$.
}

\section{Fourier and Gegenbauer expansions of binomial and logarithmic kernels}
\label{GegTnexpansions}

We require Gegenbauer expansions for kernels which naturally arise in a logarithmic 
fundamental solution of the polyharmonic equation in Vilenkin's polyspherical coordinates 
in even-dimensional Euclidean space.
Since these coordinates are rotationally invariant, in this study of addition theorems
in these coordinates, we require Fourier cosine (Chebyshev polynomials 
of the first kind) and Gegenbauer 
polynomial expansions (see \cite{Cohl12pow}). By necessity, here we treat the 
binomial $(z-x)^{p}$ and logarithmic $(z-x)^p\log(z-x)$ kernels, where $x,z,\nu\in\C$ 
and $p\in\N_0$. We have previously derived Fourier expansions of the 
binomial \cite[(3.10)]{CohlDominici} and the logarithmic \cite[(20), (26)]{CohllogFourier} kernels. 
We now treat those corresponding Gegenbauer polynomial expansions.
By examining \eqref{lft1}, \eqref{lft2}, \eqref{unlogfourseries},
 we see that for 
the computation of Fourier and Gegenbauer expansions, we are interested
in the Fourier and Gegenbauer expansions of the Euler and logarithmic 
kernels.

{
The formulas which are presented below
all rely in one way or another on the 
following important Gegenbauer polynomial
expansion which can be 
found in \cite[(3.4)]{Cohl12pow}, namely
\begin{equation}
(z-x)^{-\nu}
=\frac{2^{\mu+\frac12}\Gamma(\mu)}{\sqrt{\pi}\,\Gamma(\nu)}\expe^{i\pi(\mu-\nu+\frac12)}
(z^2-1)^{\frac{\mu-\nu}{2}+\frac14}
\sum_{n=0}^\infty (n+\mu)C_n^\mu(x)
Q_{n+\mu-\frac12}^{\nu-\mu-\frac12}(z),
\label{pGf}
\end{equation}
where $z\in\C\setminus(-\infty,1]$, and 
$x\in\C$ 
lies inside the ellipse with foci at $\pm 1$ that passes through $z$.
This result and the
following results have the curious property of 
$z,x$ lying on ellipses with foci at $\pm 1$
and this is due to the important theorem of
Szeg\H{o}, namely \cite[Theorem 12.7.3, Expansion of an analytic function in terms of orthogonal polynomials]{Szego}.
}

{
\begin{lem} Let $p\in\N_0$,
$z\in\C\setminus(-\infty,1]$, $x\in\C$.
Then
the expansion of Euler kernel in Chebyshev polynomials of the
first kind 
is given by the following binomial expansion
(see \cite[(4.4)]{CohlDominici})
\begin{eqnarray}
&&(z-x)^p=i(-1)^{p+1}\sqrt{\frac{2}{\pi}}p!(z^2-1)^{\frac{p}{2}+\frac14}
\sum_{n=0}^p\frac{(-1)^n\epsilon_nT_n(x)}{(p-n)!(p+n)!}Q_{n-\frac12}^{p+\frac12}(z),
\label{Eulerpolyexp}
\end{eqnarray}
where $p\in\N_0$.
\end{lem}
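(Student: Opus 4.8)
The plan is to obtain the finite binomial expansion \eqref{Eulerpolyexp} by taking a limit in the master Gegenbauer expansion \eqref{pGf}. The key observation is that $(z-x)^p = (z-x)^{-\nu}$ with $\nu = -p$, and that the right-hand side of \eqref{pGf} must collapse from an infinite series to a finite sum of $p+1$ terms in this limit, because $(z-x)^p$ is a polynomial in $x$ of degree $p$ and the Gegenbauer polynomials form a basis. Concretely, I would specialize \eqref{pGf} to the Chebyshev case by first letting $\mu \to 0$ in the Gegenbauer sum, using the limit relation \eqref{limitGegChebyI}, namely $\lim_{\mu\to0}\frac{n+\mu}{\mu}C_n^\mu(x) = \epsilon_n T_n(x)$. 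The factor $\Gamma(\mu)$ in the prefactor of \eqref{pGf} has a simple pole at $\mu=0$ with residue $1$, i.e. $\Gamma(\mu) \sim 1/\mu$, which exactly supplies the $\mu$ in the denominator needed to pair with $(n+\mu)/\mu$; so the $\mu\to0$ limit is finite and produces $\sum_{n=0}^\infty \epsilon_n T_n(x)\, Q_{n-\frac12}^{\nu-\frac12}(z)$ times a clean prefactor. This gives the Chebyshev analog of \eqref{pGf} valid for general $\nu$ (away from the poles of $1/\Gamma(\nu)$).

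Next I would substitute $\nu = -p$, $p\in\N_0$. The prefactor contains $1/\Gamma(\nu) = 1/\Gamma(-p)$, which vanishes since $\Gamma$ has poles at the non-positive integers; simultaneously the associated Legendre functions $Q_{n-\frac12}^{\,\nu-\frac12}(z) = Q_{n-\frac12}^{-p-\frac12}(z)$ must blow up for all but finitely many $n$ in order to produce a nonzero finite answer. The mechanism is that $1/\Gamma(\nu)$ tends to $0$ linearly in $(\nu+p)$, while $Q_{n-\frac12}^{\nu-\frac12}$ has a compensating simple pole precisely when the resulting expression would otherwise be nonpolynomial; only the terms $n=0,1,\dots,p$ survive with finite nonzero limits, and the terms $n\ge p+1$ vanish. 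To make this rigorous and identify the surviving coefficients, I would use the relation \eqref{LegendreQmorder} converting $Q_\nu^{-\mu}$ to $Q_\nu^{\mu}$, which introduces a factor $\Gamma(\nu-\mu+1)/\Gamma(\nu+\mu+1) = \Gamma(n-\frac12-(p+\frac12)+1)/\Gamma(n-\frac12+(p+\frac12)+1) = \Gamma(n-p)/\Gamma(n+p+1)$. The factor $1/\Gamma(n+p+1) = 1/(n+p)!$ is harmless, but $\Gamma(n-p)$ has a pole exactly for $0\le n\le p$, which is what rescues those terms, while for $n\ge p+1$ it is finite and the overall $1/\Gamma(-p)$ kills the term. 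Carefully combining $1/\Gamma(-p)$ with $\Gamma(n-p)$ via the reflection formula \cite[(5.5.3)]{NIST:DLMF} (as is done elsewhere in the excerpt, cf.\ the proof of \eqref{Jacobinega}) yields a ratio of the form $(-1)^{p-n}\, p!/(p-n)!$, and then $Q_{n-\frac12}^{p+\frac12}(z)$ appears with the positive order.

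Assembling the surviving prefactor: with $\nu=-p$, $\mu=0$ the power $(z^2-1)^{(\mu-\nu)/2+1/4}$ becomes $(z^2-1)^{p/2+1/4}$, the exponential phase $\expe^{i\pi(\mu-\nu+1/2)}$ becomes $\expe^{i\pi(p+1/2)} = i(-1)^p$, and the constant $2^{\mu+1/2}/\sqrt\pi$ becomes $\sqrt{2/\pi}$; bookkeeping the sign from the reflection-formula manipulation turns $i(-1)^p$ into $i(-1)^{p+1}$ and leaves a $(-1)^n$ inside the sum, matching \eqref{Eulerpolyexp}. I expect the main obstacle to be handling the $n\ge p+1$ terms cleanly: one must argue that they genuinely vanish (not merely formally) and justify interchanging the limit $\nu\to-p$ with the infinite sum, which I would do by noting that for $n$ large the terms decay geometrically (from the asymptotics of $Q_{n-\frac12}^{\nu-\frac12}(z)$ for $z$ off the cut, or from the known radius-of-convergence / Szeg\H{o} ellipse argument already invoked after \eqref{pGf}), so dominated convergence applies uniformly for $\nu$ near $-p$. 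An alternative that sidesteps the limiting subtleties, which I would fall back on if needed, is a direct verification: expand $(z-x)^p = \sum_{j=0}^p \binom{p}{j}(-1)^j z^{p-j} x^j$, write each $x^j$ in the Chebyshev basis, and check the coefficient of $T_n(x)$ equals $i(-1)^{p+1}\sqrt{2/\pi}\,p!\,(-1)^n (z^2-1)^{p/2+1/4} Q_{n-\frac12}^{p+\frac12}(z)/((p-n)!(p+n)!)$ using the explicit hypergeometric form \eqref{defnQLegendre} of $Q_{n-\frac12}^{p+\frac12}(z)$, which for these half-integer parameters is an elementary algebraic function of $z$; but the limit-from-\eqref{pGf} route is cleaner and is the one I would present.
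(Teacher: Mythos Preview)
Your proposal is correct and takes essentially the same approach as the paper: specialize \eqref{pGf} to $\nu=-p$ and $\mu\to0$, with the truncation to $0\le n\le p$ arising because $1/\Gamma(-p)=0$ is compensated (via \eqref{LegendreQmorder}) by poles of $\Gamma(n-p)$ only in that range. The paper merely takes the two limits in the opposite order---first $\nu\to-p$ (the mechanism you describe is spelled out in the proof of \eqref{binomGeg}), then $\mu\to0$ via \eqref{limitGegChebyI}---but the content is the same.
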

}
{
\begin{proof}
This truncated series result follows from
\eqref{pGf} with $-\nu=p\in\N_0$ and then
taking $\mu\to0$ with \eqref{limitGegChebyI}.
\end{proof}
}


\noindent The Fourier cosine expansion 
of the {important} logarithmic kernel is given in the following lemma.
\begin{lem}
Let $p\in\N_0$, {$z\in\C\setminus(-\infty,1]$, 
$z-x\in\C\setminus(-\infty,0]$, $x$ lies inside the ellipse with foci at $\pm 1$ that passes through $z$.}
Then
\begin{eqnarray}
&&\hspace{-0.5cm}(z-x)^p\log(z-x)
=(z-x)^p
\left(\log\frac{z+\sqrt{z^2-1}}{2}+2H_{2p}\right)
\nonumber\\[0.2cm]
&&\hspace{+0.20cm}+i\sqrt{\frac{2}{\pi}}(-1)^pp!\left(z^2-1\right)^{\frac{p}{2}+\frac14}\sum_{n=0}^p
\frac{\epsilon_n(-1)^nT_n(x)}{(p-n)!(p+n)!}
\left(
H_{p+n}+H_{p-n}
\right)
Q_{n-\frac12}^{p+\frac12}(z)
\nonumber\\[0.2cm]
&&\hspace{+0.20cm}
+i\sqrt{\frac{2}{\pi}}(-1)^{p+1}p!\left(z^2-1\right)^{\frac{p}{2}+\frac14}\sum_{n=0}^{p-1}
\frac{\epsilon_n(-1)^nT_n(x)}{(p+n)!}
\sum_{k=0}^{p-n-1}\frac
{(2n+2k+1)\left[1+\frac{k!(p+n)!}{(2n+k)!(p-n)!}\right]}
{k!(p-n-k)(p+n+k+1)}
Q_{n-\frac12}^{n+k+\frac12}(z)
\nonumber\\[0.2cm]
&&\hspace{+0.20cm}
+ i\frac{2^{\frac32}}{\sqrt{\pi}}
(-1)^{p+1}p!\left(z^2-1\right)^{\frac{p}{2}+\frac14}\sum_{n=1}^{p}\frac{(-1)^nT_n(x)}{(p-n)!}
\sum_{k=0}^{n-1}\frac
{2k+1}
{(n+k)!(p-k)(p+k+1)}
Q_{n-\frac12}^{k+\frac12}(z)
\nonumber\\[0.2cm]
&&\hspace{+0.20cm}
+i\frac{2^{\frac32}}{\sqrt{\pi}}
p!\left(z^2-1\right)^{\frac{p}{2}+\frac14}\sum_{n=p+1}^\infty \frac{(n-p-1)!\,T_n(x)}{(p+n)!}
Q_{n-\frac12}^{p+\frac12}(z).
\label{limitderivCheby}
\end{eqnarray}
\end{lem}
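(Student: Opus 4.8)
The plan is to obtain \eqref{limitderivCheby} by \emph{parameter differentiation} of the Gegenbauer expansion \eqref{pGf}, exactly as \eqref{Eulerpolyexp} was deduced from \eqref{pGf} by a $\mu\to0$ limit. First I pass to the Fourier cosine level before differentiating: letting $\mu\to0$ in \eqref{pGf} and using \eqref{limitGegChebyI} together with $\mu\Gamma(\mu)\to1$, i.e.\ $\Gamma(\mu)(n+\mu)C_n^\mu(x)\to\epsilon_nT_n(x)$, gives, for $\nu\notin-\N_0$ and $x$ inside the Szeg\H{o} ellipse through $z$,
\begin{equation}
(z-x)^{-\nu}=\frac{\sqrt2}{\sqrt\pi\,\Gamma(\nu)}\expe^{i\pi(\frac12-\nu)}(z^2-1)^{\frac14-\frac\nu2}\sum_{n=0}^\infty\epsilon_nT_n(x)\,Q_{n-\frac12}^{\nu-\frac12}(z).
\label{planeqA}
\end{equation}
Rewriting each $Q_{n-\frac12}^{\nu-\frac12}(z)$ via the Whipple formula \eqref{Whip1} (degree $n-\frac12$, order $\nu-\frac12$) as $\sqrt{\pi/2}\,\expe^{i\pi(\nu-\frac12)}\Gamma(n+\nu)(z^2-1)^{-\frac14}P_{-\nu}^{-n}\bigl(\frac{z}{\sqrt{z^2-1}}\bigr)$ collapses \eqref{planeqA} to the compact identity
\begin{equation}
(z-x)^{-\nu}=\frac{(z^2-1)^{-\nu/2}}{\Gamma(\nu)}\sum_{n=0}^\infty\epsilon_nT_n(x)\,\Gamma(n+\nu)\,P_{-\nu}^{-n}\!\left(\frac{z}{\sqrt{z^2-1}}\right),
\label{planeqB}
\end{equation}
which continues analytically to a punctured neighbourhood of $\nu=-p$; at $\nu=-p$ the simple zero of $1/\Gamma(\nu)$ cancels the simple pole of $\Gamma(n+\nu)$ for $0\le n\le p$ and annihilates the regular terms with $n\ge p+1$, so \eqref{planeqB} reduces to \eqref{Eulerpolyexp} there.

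Next I apply $-\partial/\partial\nu$ to \eqref{planeqB} termwise and let $\nu\to-p$, using $-\bigl[\partial_\nu(z-x)^{-\nu}\bigr]_{\nu=-p}=(z-x)^p\log(z-x)$ on the left. With the $n$th summand written as $g_n(\nu)=(z^2-1)^{-\nu/2}\,\frac{\Gamma(n+\nu)}{\Gamma(\nu)}\,P_{-\nu}^{-n}\bigl(\frac{z}{\sqrt{z^2-1}}\bigr)$, I split on the size of $n$. For $n\ge p+1$ only $\Gamma(n+\nu)/\Gamma(\nu)$ fails to be regular at $\nu=-p$ (it has a simple zero there with derivative $(-1)^pp!\,(n-p-1)!$), so $g_n'(-p)$ equals that derivative times the remaining factors evaluated at $\nu=-p$, and rewriting $P_p^{-n}\bigl(\frac{z}{\sqrt{z^2-1}}\bigr)$ back through \eqref{Whip1} in terms of $Q_{n-\frac12}^{p+\frac12}(z)$ produces the last sum of \eqref{limitderivCheby}. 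For $0\le n\le p$ first-order data are needed: from the residue and $\psi$-value of the Gamma function at its poles one has $\Gamma(n+\nu)/\Gamma(\nu)=\frac{(-1)^np!}{(p-n)!}\bigl(1+(H_{p-n}-H_p)(\nu+p)\bigr)+O((\nu+p)^2)$, and the standard relation $P_a^{-n}=\frac{\Gamma(a-n+1)}{\Gamma(a+n+1)}P_a^n$ ($n\in\N_0$) together with the chain rule turns $\bigl[\partial_\nu P_{-\nu}^{-n}\bigr]_{\nu=-p}$ into a multiple of $P_p^n\bigl(\frac{z}{\sqrt{z^2-1}}\bigr)$ plus $-\bigl[\partial_\nu P_\nu^n\bigl(\frac{z}{\sqrt{z^2-1}}\bigr)\bigr]_{\nu=p}$, the latter being Szmytkowski's formula in the guise \eqref{limitderivdegreeqnum} with $m\mapsto n$.

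The rest is assembly. Collecting the three product-rule terms for $0\le n\le p$ and converting the lone surviving $P_p^n\bigl(\frac{z}{\sqrt{z^2-1}}\bigr)$ into $Q_{n-\frac12}^{p+\frac12}(z)$ by \eqref{Whip1}, the logarithms combine by $\tfrac12\log(z^2-1)+\log\frac{z+\sqrt{z^2-1}}{2\sqrt{z^2-1}}=\log\frac{z+\sqrt{z^2-1}}{2}$, and the coefficient of $Q_{n-\frac12}^{p+\frac12}(z)$ becomes $\log\frac{z+\sqrt{z^2-1}}{2}+2H_{2p}-H_{p+n}-H_{p-n}$. Splitting off the $n$-independent part $\log\frac{z+\sqrt{z^2-1}}{2}+2H_{2p}$ and resumming it against \eqref{Eulerpolyexp} yields the first line of \eqref{limitderivCheby}, the residual $-H_{p+n}-H_{p-n}$ contributes the second line, and the $Q_{n-\frac12}^{n+k+\frac12}(z)$- and $Q_{n-\frac12}^{k+\frac12}(z)$-terms carried over from \eqref{limitderivdegreeqnum} give the third and fourth; the overall minus sign from $-\partial/\partial\nu$ then finishes the identification.

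The step I expect to be the genuine obstacle is the confluent limit at $\nu=-p$ for $0\le n\le p$, where $1/\Gamma(\nu)$ vanishes, $\Gamma(n+\nu)$ blows up, and $P_{-\nu}^{-n}$ must be Taylor-expanded, all to first order in $\nu+p$ at once; keeping the ensuing $\psi$- and harmonic-number shifts straight and checking that the apparently spurious $H_p$, $H_{p\pm n}$ and $\tfrac12\log(z^2-1)$ pieces cancel or merge into the compact coefficients displayed in \eqref{limitderivCheby} is the delicate bookkeeping. The termwise differentiation and the interchange of limit with summation are justified by local uniform convergence of the series on the relevant Szeg\H{o} ellipses, via \cite[Theorem 12.7.3]{Szego}.
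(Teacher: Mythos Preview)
Your argument is correct, but it takes a different route from the paper's own proof of this particular lemma. The paper's proof here is a one-line citation: it invokes \cite[(26)]{CohllogFourier}, where the Chebyshev expansion of $(z-x)^p\log(z-x)$ was already computed in a different normalisation, and then merely converts the result into the present $Q$-form via \eqref{Whip1}, \eqref{Whip2}, \eqref{LegendreQmorder}. No new differentiation is carried out here; the analytic work was done in the earlier paper.

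What you do instead is rebuild the expansion from scratch by parameter differentiation of the Chebyshev version \eqref{planeqB} of \eqref{pGf}, taking $\nu\to -p$ with Szmytkowski's formula \eqref{limitderivdegreeqnum} supplying the $[\partial_\nu P_\nu^n]_{\nu=p}$ input. This is precisely the method the paper deploys for the \emph{Gegenbauer} case in Lemma~\ref{biloggeg} (your \eqref{planeqB} is the $\mu\to0$ limit of the paper's \eqref{zmxLegendreP}), so your argument amounts to carrying out the $\mu=0$ instance of that proof explicitly rather than importing it from \cite{CohllogFourier}. The trade-off is clear: the paper's citation is shorter and avoids repeating the confluent bookkeeping you rightly flag as delicate, while your approach is self-contained and makes the Chebyshev and Gegenbauer lemmas visibly parallel. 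Both are valid; your route simply removes the external dependence at the cost of redoing the $\nu\to -p$ limit once more.
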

\begin{proof}
Use \cite[(26)]{CohllogFourier}, with
\eqref{Whip1}, \eqref{Whip2}, \eqref{LegendreQmorder}.
\end{proof}

The following consequence of \eqref{limitderivCheby} is also given in 
\cite[p.~259]{MOS}.

\begin{cor} 
Let {$z\in\C\setminus(-\infty,1]$, $z-x\in\C\setminus(-\infty,0]$, $x$ lies inside the ellipse with foci at $\pm 1$ that passes through $z$.}
Then
\[
\log(z-x)=\log\frac{z+\sqrt{z^2-1}}{2}-2\sum_{n=1}^\infty \frac{T_n(x)}{n}
\frac{1}{(z+\sqrt{z^2-1})^n}.
\]
\end{cor}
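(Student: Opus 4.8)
The plan is to specialize the Fourier cosine expansion \eqref{limitderivCheby} to $p=0$, in which case $(z-x)^p\log(z-x)$ collapses to $\log(z-x)$, and to show that all but the last of the five terms on the right-hand side disappear. Indeed, for $p=0$ we have $H_0=0$ by \eqref{harmnum}, so the leading term is simply $\log\frac{z+\sqrt{z^2-1}}{2}$; the first sum contributes only its $n=0$ term, which carries the factor $H_{p+n}+H_{p-n}=H_0+H_0=0$ and hence vanishes; the second and third sums are empty, since their summation ranges $0\le n\le p-1$ and $1\le n\le p$ are void when $p=0$; and only the fourth sum remains. Using $(n-p-1)!/(p+n)!=(n-1)!/n!=1/n$, this leaves
\begin{equation*}
\log(z-x)=\log\frac{z+\sqrt{z^2-1}}{2}+i\,\frac{2^{\frac32}}{\sqrt{\pi}}\,(z^2-1)^{\frac14}\sum_{n=1}^\infty\frac{T_n(x)}{n}\,Q_{n-\frac12}^{\frac12}(z).
\end{equation*}

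The remaining task is a closed-form evaluation of $Q_{n-\frac12}^{\frac12}(z)$ for every $n\in\N$ --- note that \eqref{Qhh} records only the case $n=1$. I would obtain the general formula from the Whipple formula \eqref{Whip1} with $\nu=n-\tfrac12$ and $\mu=\tfrac12$, which expresses $Q_{n-\frac12}^{\frac12}(z)$ as a constant times $P_{-1}^{-n}\!\left(\frac{z}{\sqrt{z^2-1}}\right)$. In the hypergeometric representation \eqref{associatedLegendrefunctionP} the upper parameter $\nu+1$ equals $0$, so the ${}_2F_1$ is identically $1$ and $P_{-1}^{-n}(w)=\frac{1}{n!}\left(\frac{w-1}{w+1}\right)^{n/2}$. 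Substituting $w=\frac{z}{\sqrt{z^2-1}}$ and using $\big(z-\sqrt{z^2-1}\big)\big(z+\sqrt{z^2-1}\big)=1$ gives $\frac{w-1}{w+1}=\big(z+\sqrt{z^2-1}\big)^{-2}$, and therefore
\begin{equation*}
Q_{n-\frac12}^{\frac12}(z)=\frac{i\sqrt{\pi/2}}{(z^2-1)^{\frac14}\,\big(z+\sqrt{z^2-1}\big)^{n}},
\end{equation*}
which reduces to \eqref{Qhh} when $n=1$. Inserting this into the displayed sum, the factors $(z^2-1)^{1/4}$ cancel, $i\cdot i=-1$, and the constant collapses via $\frac{2^{3/2}}{\sqrt{\pi}}\cdot\sqrt{\pi/2}=2$, yielding the asserted expansion with coefficient $-2$.

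There is no real obstacle here; the only points needing a little care are the general evaluation of $Q_{n-\frac12}^{\frac12}$ and the bookkeeping of the branch of $(z^2-1)^{1/4}$ and of the phase factors so that the overall constant comes out to exactly $-2$ (this is why the hypotheses $z\in\C\setminus(-\infty,1]$ and $z-x\in\C\setminus(-\infty,0]$, with $x$ inside the relevant ellipse, are imposed, guaranteeing convergence and single-valuedness). As an independent check --- and indeed an alternative one-line proof --- one may note that the claimed identity is nothing but the classical generating function $2\sum_{n\ge1}\frac{T_n(x)}{n}t^n=-\log(1-2xt+t^2)$ evaluated at $t=\big(z+\sqrt{z^2-1}\big)^{-1}$, where $t+t^{-1}=2z$ turns $1-2xt+t^2$ into $2t(z-x)$.
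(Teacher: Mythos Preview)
Your proof is correct and follows the same approach as the paper: set $p=0$ in \eqref{limitderivCheby} and evaluate $Q_{n-\frac12}^{\frac12}(z)$ explicitly. The only difference is cosmetic --- the paper simply cites DLMF (14.5.17) for that evaluation, whereas you derive it from the Whipple formula \eqref{Whip1} and the hypergeometric representation \eqref{associatedLegendrefunctionP}; your added generating-function check is a nice bonus not present in the paper.
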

\begin{proof}
Let $p=0$ in 
\eqref{limitderivCheby}
and use  \cite[(14.5.17)]{NIST:DLMF}.
This completes the proof.
\end{proof}

The Gegenbauer polynomial expansion of the binomial is given in the following lemma.
\begin{lem}
Let $p\in\N_0$, $\mu\in(-\frac12,\infty)\setminus\{0\}$, 
{$z\in\C\setminus(-\infty,1]$, 
$x\in\C$.}
Then
\begin{equation}
(z-x)^p=\frac{2^{\mu+\frac12}}{\sqrt{\pi}}\expe^{i\pi(p-\mu-\frac12)}\Gamma(\mu)p!
(z^2-1)^{\frac{p+\mu}{2}+\frac14}
\sum_{n=0}^p \frac{(-1)^n(n+\mu)C_n^\mu(x)}
{(p-n)!\Gamma(n+p+2\mu+1)}
Q_{n+\mu-\frac12}^{p+\mu+\frac12}(z).
\label{binomGeg}
\end{equation}
\end{lem}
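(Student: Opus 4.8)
The plan is to derive \eqref{binomGeg} as a truncation of the master expansion \eqref{pGf} by specializing the exponent. Starting from
\[
(z-x)^{-\nu}=\frac{2^{\mu+\frac12}\Gamma(\mu)}{\sqrt{\pi}\,\Gamma(\nu)}\expe^{i\pi(\mu-\nu+\frac12)}
(z^2-1)^{\frac{\mu-\nu}{2}+\frac14}\sum_{n=0}^\infty (n+\mu)C_n^\mu(x)Q_{n+\mu-\frac12}^{\nu-\mu-\frac12}(z),
\]
I would substitute $\nu=-p$ with $p\in\N_0$. On the left this gives exactly $(z-x)^p$. On the right the prefactor $1/\Gamma(\nu)=1/\Gamma(-p)$ vanishes, so the identity must be read as a limit $\nu\to -p$, and the apparent zero is compensated by poles of the Legendre functions $Q_{n+\mu-\frac12}^{\nu-\mu-\frac12}(z)$ in the tail of the series; this is the crux of the argument.

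The key step is to control the factor $\Gamma(\nu)^{-1}Q_{n+\mu-\frac12}^{\nu-\mu-\frac12}(z)$ as $\nu\to-p$. I would argue that for $n\ge p+1$ the order $\nu-\mu-\frac12$ stays in the region where $Q$ is finite and analytic, so those terms carry a factor $1/\Gamma(-p)=0$ and drop out, leaving only $n=0,\dots,p$. For $0\le n\le p$ the relevant combination has a removable singularity: one way to see this cleanly is to first rewrite the $Q$ in \eqref{pGf} in terms of $P$ using the Whipple relation \eqref{Whip1}, or alternatively to use \eqref{LegendreQmorder} to flip the sign of the order so that the $\Gamma$-function zero is cancelled against a $\Gamma(\nu+\mu+1)$-type factor coming from the connection formula. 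After this bookkeeping one is left with $Q_{n+\mu-\frac12}^{p+\mu+\frac12}(z)$ (order $-\nu+\mu+\frac12=p+\mu+\frac12$ at $\nu=-p$), and the constant $\Gamma(\nu)^{-1}$ gets replaced in the limit by the standard residue expression producing $p!/\Gamma(n+p+2\mu+1)$ together with the sign $(-1)^n$ and the phase $\expe^{i\pi(p-\mu-\frac12)}$.

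Concretely I would proceed as follows. First, isolate the $n$-th term of \eqref{pGf} and write $\Gamma(\nu)^{-1}=\Gamma(-\nu)\sin(\pi\nu)\cdot\pi^{-1}$ via the reflection formula, or differentiate: since the paper already has the limit-derivative machinery \eqref{limitderivdegreeqnum}, I expect the author actually wants to invoke that directly. The cleanest route is: apply $\partial/\partial\nu$ is not needed here — the binomial case \emph{truncates} and has no logarithm — so a bare limit suffices. Thus: (i) substitute $\nu\to -p$ in \eqref{pGf}; (ii) show the tail $n\ge p+1$ vanishes because of the factor $\Gamma(-p)^{-1}=0$ and finiteness of the corresponding $Q$; (iii) for $0\le n\le p$, evaluate $\lim_{\nu\to -p}\Gamma(\nu)^{-1}Q_{n+\mu-\frac12}^{\nu-\mu-\frac12}(z)$ using \eqref{LegendreQmorder} (which relates $Q^{-\mu}$ to $Q^{\mu}$ through a ratio of Gamma functions with a pole that cancels the $\Gamma(\nu)^{-1}$ zero) to obtain a finite multiple of $Q_{n+\mu-\frac12}^{p+\mu+\frac12}(z)$; (iv) collect the surviving constants—$2^{\mu+\frac12}\Gamma(\mu)/\sqrt\pi$, the phase $\expe^{i\pi(p-\mu-\frac12)}$, the factor $p!$, the sign $(-1)^n$, and the denominator $(p-n)!\,\Gamma(n+p+2\mu+1)$—into the stated form.

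The main obstacle will be step (iii): tracking exactly which Gamma-function pole cancels the $\Gamma(\nu)^{-1}$ zero and pinning down the resulting finite constant without sign or phase errors, since \eqref{LegendreQmorder} carries an $\expe^{-2i\pi\mu}$ factor and the connection between $Q^{\nu-\mu-1/2}$ at $\nu=-p$ (order $-p-\mu-\frac12$, i.e.\ negative) and the desired $Q^{p+\mu+1/2}$ (positive order) must be made precise. One should double-check the computation against the Chebyshev case \eqref{Eulerpolyexp}, which the paper notes follows from exactly this specialization followed by $\mu\to 0$ via \eqref{limitGegChebyI}; matching the two provides a useful consistency check on all constants.
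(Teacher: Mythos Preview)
Your proposal is correct and follows essentially the same route as the paper: start from \eqref{pGf}, take the limit $\nu\to-p$, and use \eqref{LegendreQmorder} together with \eqref{negintpoch} to evaluate $\lim_{\nu\to-p}\Gamma(\nu)^{-1}Q_{n+\mu-\frac12}^{\nu-\mu-\frac12}(z)$, which vanishes for $n\ge p+1$ and gives the stated finite value for $0\le n\le p$. The only organizational difference is that the paper applies \eqref{LegendreQmorder} uniformly and then lets the Pochhammer identity \eqref{negintpoch} handle both the truncation and the surviving coefficients in one stroke, whereas you first argue the tail vanishes by finiteness of $Q$ and then apply \eqref{LegendreQmorder} to the remaining terms; the two are equivalent.
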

\begin{proof}
Starting with \cite[(3.4)]{Cohl12pow},
\eqref{pGf},
we take the limit $\nu\to-p,$ producing
\begin{equation}
(z-x)^{p}
=\frac{2^{\mu+\frac12}}{\sqrt{\pi}}
\expe^{i\pi(\mu+p+\frac12)}
\Gamma(\mu)
(z^2-1)^{\frac{\mu+p}{2}+\frac14}
\sum_{n=0}^\infty (n+\mu)C_n^\mu(x)
\lim_{\nu\to-p}
\frac{1}{\Gamma(\nu)}
Q_{n+\mu-\frac12}^{\nu-\mu-\frac12}(z).
\label{binomqlim}
\end{equation}
By using \eqref{LegendreQmorder}, \eqref{negintpoch}, we have 
\[
\lim_{\nu\to-p}
\frac{1}{\Gamma(\nu)}
Q_{n+\mu-\frac12}^{\nu-\mu-\frac12}(z)=
\left\{
\begin{array}{cl}
{\displaystyle \frac{(-1)^{n+1}\expe^{-2i\pi\mu}p!}{(p-n)!\Gamma(n+p+2\mu+1)}Q_{n+\mu-\frac12}^{p+\mu+\frac12}(z)}
& \mbox{if}\ 0\le n\le p,\\[0.4cm]
0 & \mbox{if}\ n\ge p+1.
\end{array}
\right.
\]
Using this limit in \eqref{binomqlim} completes the proof.
\end{proof}

\noindent Note that the above lemma generalizes \eqref{Eulerpolyexp} by the {limit formula} \eqref{limitGegChebyI}.\medskip

The Gegenbauer polynomial expansion of the logarithmic kernel is given as follows.

\begin{lem} \label{biloggeg}
Let $p\in\N_0$, $\mu\in\N$, {$z\in\C\setminus(-\infty,1]$, $z-x\in\C\setminus(-\infty,0]$, $x$ lies inside the ellipse with foci at $\pm 1$ that passes through $z$.}
Then
\begin{eqnarray}
&&\hspace{-1.0cm}(z-x)^p\log(z-x)
=(z-x)^p
\left(\log\frac{z+\sqrt{z^2-1}}{2}+2H_{2p+2\mu}+H_p-H_{p+\mu}\right)
\nonumber\\[0.2cm]
&&\hspace{-0.5cm}\hspace{+0.20cm}+i(-1)^{p+\mu}\frac{2^{\mu+\frac12}}{\sqrt{\pi}}p!(\mu-1)!
\left(z^2-1\right)^{\frac{p+\mu}{2}+\frac14}\sum_{n=0}^p
\frac{(n+\mu)(-1)^nC_n^\mu(x)}{(p-n)!(p+n+2\mu)!}
\left(
H_{p+n+2\mu}+H_{p-n}
\right)
Q_{n+\mu-\frac12}^{p+\mu+\frac12}(z)
\nonumber\\[0.2cm]
&&\hspace{-0.5cm}\hspace{+0.20cm}
+i(-1)^{p+\mu+1}\frac{2^{\mu+\frac12}}{\sqrt{\pi}}p!(\mu-1)!\left(z^2-1\right)^{\frac{p+\mu}{2}+\frac14}
\sum_{n=0}^{p-1}
\frac{(n+\mu)(-1)^nC_n^\mu(x)}{(p+n+2\mu)!}\nonumber\\[-0.0cm]
&&\hspace{-0.5cm}\hspace{1.2cm}\times\sum_{k=0}^{p-n-1}\frac
{(2n+2k+2\mu+1)
}
{k!(p-n-k)(p+n+k+2\mu+1)}
\left[1+\frac{k!(p+n+2\mu)!}{(k+2n+2\mu)!(p-n)!}\right]
Q_{n+\mu-\frac12}^{k+n+\mu+\frac12}(z)
\nonumber\\[-0.0cm]
&&\hspace{-0.5cm}\hspace{+0.20cm}
+ i(-1)^{p+\mu+1}\frac{2^{\mu+\frac12}}{\sqrt{\pi}}
p!(\mu-1)!\left(z^2-1\right)^{\frac{p+\mu}{2}+\frac14}\sum_{n=0}^{p}\frac{(n+\mu)(-1)^nC_n^\mu(x)}{(p-n)!}
\nonumber\\[-0.2cm]
&&\hspace{-0.5cm}\hspace{1.2cm}\times\sum_{k=0}^{n+\mu-1}\frac
{2k+1}
{(n+k+\mu)!(p+\mu-k)(p+k+\mu+1)}
Q_{n+\mu-\frac12}^{k+\frac12}(z)
\nonumber\\[0.2cm]
&&\hspace{-0.5cm}\hspace{+0.20cm}
+i(-1)^\mu\frac{2^{\mu+\frac12}}{\sqrt{\pi}}
p!(\mu-1)!\left(z^2-1\right)^{\frac{p+\mu}{2}+\frac14}\sum_{n=p+1}^\infty \frac{(n+\mu)(n-p-1)!C_n^\mu(x)}
{(p+n+2\mu)!}
Q_{n+\mu-\frac12}^{p+\mu+\frac12}(z).
\label{limitderivGeg}
\end{eqnarray}
\end{lem}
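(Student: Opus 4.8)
The plan is to obtain \eqref{limitderivGeg} by the same parameter-differentiation device that produced \eqref{binomGeg}, but keeping the first-order term of the Taylor expansion about $\nu=-p$. Since $(z-x)^p\log(z-x)=-\bigl[\frac{\partial}{\partial\nu}(z-x)^{-\nu}\bigr]_{\nu=-p}$, everything reduces to differentiating the master expansion \eqref{pGf} in $\nu$ and evaluating at $\nu=-p$; term-by-term differentiation is legitimate because \eqref{pGf} converges locally uniformly in $\nu$ (and in $x$ inside the relevant ellipse), again by Szeg\H{o}'s theorem. Before differentiating I would recast \eqref{pGf} by applying the Whipple relation \eqref{Whip1} to each $Q_{n+\mu-\frac12}^{\nu-\mu-\frac12}(z)$; after simplification the phases and powers of $(z^2-1)$ telescope and \eqref{pGf} becomes
\[
(z-x)^{-\nu}=2^{\mu}\,\Gamma(\mu)\,(z^2-1)^{\frac{\mu-\nu}{2}}\sum_{n=0}^{\infty}(n+\mu)\,(\nu)_{n}\,C_{n}^{\mu}(x)\,P_{\mu-\nu}^{-n-\mu}(w),\qquad w:=z/\sqrt{z^2-1},
\]
in which $\nu$ enters only through $(z^2-1)^{-\nu/2}$, the Pochhammer polynomial $(\nu)_n=\Gamma(\nu+n)/\Gamma(\nu)$, and the \emph{degree} $\mu-\nu$ of a Ferrers/Legendre function; the point of this rearrangement is that at $\nu=-p$ that degree, $\mu+p$, is a non-negative integer, which is precisely why the Lemma needs $\mu\in\N$.

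Differentiating and setting $\nu=-p$ yields three groups of terms. (i) From $(z^2-1)^{-\nu/2}$ one gets $\frac12\log(z^2-1)$ times the $\nu=-p$ value of the sum, which by \eqref{binomGeg} (in the Legendre form above) is a multiple of $(z-x)^p\log(z^2-1)$. (ii) From the Pochhammer factor: $\partial_\nu(\nu)_n=(\nu)_n\sum_{k=0}^{n-1}(\nu+k)^{-1}$, whose value at $\nu=-p$ is, for $0\le n\le p$, an elementary multiple of $H_{p-n}-H_p$, while for $n\ge p+1$ it equals $(-1)^p p!\,(n-p-1)!\neq 0$ even though $(-p)_n=0$; this last fact revives the infinite tail and, after re-expressing the Legendre functions as $Q_{n+\mu-\frac12}^{p+\mu+\frac12}(z)$, produces the final sum $\sum_{n=p+1}^{\infty}$ of \eqref{limitderivGeg}. (iii) From the degree of the Legendre function: $\bigl[\partial_\nu P_{\mu-\nu}^{-n-\mu}(w)\bigr]_{\nu=-p}=-\bigl[\partial_\lambda P_{\lambda}^{-n-\mu}(w)\bigr]_{\lambda=p+\mu}$; flipping the order to $P_\lambda^{n+\mu}(w)$ via the standard identity $P_\lambda^{-m}(w)=\frac{\Gamma(\lambda-m+1)}{\Gamma(\lambda+m+1)}P_\lambda^m(w)$ (valid since $m=n+\mu\in\N$, so no $Q$-correction survives), and then invoking Szmytkowski's formula \eqref{limitderivdegreepnum} — or rather its $z\mapsto z/\sqrt{z^2-1}$ reformulation \eqref{limitderivdegreeqnum} with ``$m$''$\,=n+\mu$ and ``$p$''$\,=p+\mu$, which already outputs $Q_{n+\mu-\frac12}^{\cdot}(z)$ directly — contributes the term $\log\frac{z+\sqrt{z^2-1}}{2\sqrt{z^2-1}}+2H_{2p+2\mu}-H_{p+\mu}-H_{p-n}$ together with the two inner $k$-sums appearing in \eqref{limitderivGeg}.

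It then remains to collect everything. Using $\log\frac{z+\sqrt{z^2-1}}{2\sqrt{z^2-1}}=\log\frac{z+\sqrt{z^2-1}}{2}-\frac12\log(z^2-1)$, the stray $\log(z^2-1)$ from group (i) is absorbed and only $\log\frac{z+\sqrt{z^2-1}}{2}$ remains. The parts of the coefficients that are independent of $n$ — $\log\frac{z+\sqrt{z^2-1}}{2}$, $2H_{2p+2\mu}$, $-H_{p+\mu}$ and $H_p$ — are extracted from the $n$-sum and resummed against $\sum_{n=0}^{p}(n+\mu)(-p)_nC_n^{\mu}(x)P_{\mu+p}^{-n-\mu}(w)$, which is proportional to $(z-x)^p$ by \eqref{binomGeg}; this yields the first line of \eqref{limitderivGeg}. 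The residual $n$-dependent harmonic combination collapses to $H_{p+n+2\mu}+H_{p-n}$ and stays inside the second sum, and the remaining Ferrers/Legendre functions of argument $w$ (those from groups (i) and (ii)) are converted back to $Q_{n+\mu-\frac12}^{p+\mu+\frac12}(z)$ by \eqref{Whip1}, \eqref{Whip2} and \eqref{LegendreQmorder}, reproducing the three double sums of \eqref{limitderivGeg}.

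The main obstacle is not conceptual but the exact bookkeeping of the many $\Gamma$-quotients, digamma values, $e^{i\pi(\cdot)}$ phases and powers of $(z^2-1)$ incurred each time one passes between the ``$z$'' and ``$w$'' pictures via Whipple, and in particular ensuring that the several harmonic-number contributions — $H_{p-n}-H_p$ from $\partial_\nu(\nu)_n$, $\psi(p-n+1)-\psi(p+n+2\mu+1)=H_{p-n}-H_{p+n+2\mu}$ from the order-flip $\Gamma$-quotient, and $2H_{2p+2\mu}-H_{p+\mu}-H_{p-n}$ from Szmytkowski's formula — combine with the correct signs so that precisely $H_{p+n+2\mu}+H_{p-n}$ is left inside the second sum and precisely $2H_{2p+2\mu}+H_p-H_{p+\mu}$ is resummed into the $(z-x)^p$ prefactor. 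As a consistency check one can let $\mu\to 0$: since $(\mu-1)!\,(n+\mu)C_n^{\mu}(x)\to\epsilon_nT_n(x)$ by \eqref{limitGegChebyI} and $2H_{2p+2\mu}+H_p-H_{p+\mu}\to 2H_{2p}$, formula \eqref{limitderivGeg} should collapse to the Fourier cosine expansion \eqref{limitderivCheby}.
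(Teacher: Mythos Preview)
Your approach is essentially the paper's: recast \eqref{pGf} via the Whipple relation into the $P$-form \eqref{zmxLegendreP}, differentiate in the parameter, and feed the resulting degree-derivative of $P_{p+\mu}^{n+\mu}(w)$ into Szmytkowski's formula via \eqref{limitderivdegreeqnum}; the requirement $\mu\in\N$ enters for exactly the reason you identify. The only organizational difference is that the paper performs the order flip $P_\lambda^{-n-\mu}\to P_\lambda^{n+\mu}$ \emph{before} differentiating---absorbing your Pochhammer factor and the flip's $\Gamma$-quotient into a single ratio $\Gamma(\nu+p+1)/\Gamma(\nu+p+n+2\mu+1)$---whereas you apply the product rule first and flip afterwards; this merely redistributes the digamma contributions among your groups (i)--(iii) without changing the outcome.
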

\begin{proof}
Let $\nu\in\C$, $\mu\in(-\frac12,\infty)\setminus\{0\},$
$z\in\C\setminus(-\infty,1]$ and $x\in\C$ lies inside the ellipse with foci at $\pm 1$ that passes through $z$. Using 
\eqref{pGf}
and \eqref{Whip1}, one has
\begin{equation}
(z-x)^\nu=2^\mu\Gamma(\mu)(z^2-1)^{\frac{\nu+\mu}{2}}
\sum_{n=0}^\infty
(n+\mu)(-\nu)_n
P_{\nu+\mu}^{-n-\mu}
\left(\frac{z}{\sqrt{z^2-1}}\right)C_n^\mu(x).
\label{zmxLegendreP}
\end{equation}
Let $p\in\N_0$. One then may use the following identity
\[
(z-x)^p\log(z-x)=\lim_{\nu\to0}\frac{\partial}{\partial\nu}(z-x)^{\nu+p},
\]
which upon substituting $\nu\mapsto\nu+p$ in \eqref{zmxLegendreP} produces
\[
(z-x)^p\log(z-x)=2^\mu(z^2-1)^{\frac{\mu+p}{2}}\sum_{n=0}^\infty (n+\mu)(-1)^nC_n^\mu(x)
\lim_{\nu\to0} 
\frac{\partial}{\partial\nu} 
\frac{
(z^2-1)^{\frac{\nu}{2}}
\Gamma(\nu+p+1)}{\Gamma(\nu+p+2\mu+1)}
P_{\nu+p+\mu}^{n+\mu}
\left(\frac{z}{\sqrt{z^2-1}}\right).
\]
Performing the derivatives, one obtains 
\begin{eqnarray}
&&\hspace{-1cm}\frac{\partial}{\partial\nu} 
\frac{
(z^2-1)^{\frac{\nu}{2}}
\Gamma(\nu+p+1)}{\Gamma(\nu+p+2\mu+1)}
P_{\nu+p+\mu}^{n+\mu}
\left(\frac{z}{\sqrt{z^2-1}}\right)\nonumber\\[0.2cm]
&&= \frac{
(z^2-1)^{\frac{\nu}{2}}
\Gamma(\nu+p+1)}{\Gamma(\nu+p+2\mu+1)}
\left(\log\sqrt{z^2-1}+\psi(p+\nu+1)-\psi(p+n+\nu+2\mu+1)\right)
P_{\nu+p+\mu}^{n+\mu}
\left(\frac{z}{\sqrt{z^2-1}}\right)\nonumber\\[0.2cm]
&&+
\frac{
(z^2-1)^{\frac{\nu}{2}}
\Gamma(\nu+p+1)}{\Gamma(\nu+p+2\mu+1)}
\frac{\partial}{\partial\nu}P_{\nu+p+\mu}^{n+\mu}
\left(\frac{z}{\sqrt{z^2-1}}\right),\nonumber
\end{eqnarray}
and after taking the limit one has
\begin{eqnarray}
&&\hspace{-1cm}\lim_{\nu\to0}\frac{\partial}{\partial\nu} 
\frac{p!}{\Gamma(p+2\mu+1)}
P_{p+\mu}^{n+\mu}
\left(\frac{z}{\sqrt{z^2-1}}\right)\nonumber\\[0.2cm]
&&= \frac{p!}{\Gamma(p+2\mu+1)}
\left(\log\sqrt{z^2-1}+\psi(p+1)-\psi(p+n+2\mu+1)\right)
P_{p+\mu}^{n+\mu}
\left(\frac{z}{\sqrt{z^2-1}}\right)\nonumber\\[0.2cm]
&&+
\frac{p!}{\Gamma(p+2\mu+1)}
\left[\frac{\partial}{\partial\nu}P_{\nu}^{n+\mu}
\left(\frac{z}{\sqrt{z^2-1}}\right)\right]_{\nu=p+\mu}.\nonumber
\end{eqnarray}

\noindent For general $\mu$, the parameter derivative of the associated Legendre function
of the first kind given in the above equation is not known. However for $\mu\in\N_0$ it is known (see 
\eqref{limitderivdegreepnum}).
The expansion for the logarithmic kernel
for $\mu=0$ corresponds to the Chebyshev
polynomial of the first kind (see \eqref{limitGegChebyI}), and therefore corresponds to 
\eqref{limitderivCheby}. Hence from this point forward we treat $\mu\in\N$,
and the result follows using \eqref{limitderivdegreeqnum}. 
\end{proof}

%
%

One interesting consequence of \eqref{limitderivGeg} is the following expansion.
\begin{cor}
Let $m\in\N$, {$z\in\C\setminus(-\infty,1]$,
$z-x\in\C\setminus(-\infty,0]$, $x$ lies inside the ellipse with foci at $\pm 1$ that passes through $z$.}
Then
\begin{eqnarray}
&&\hspace{-0.5cm}\log(z-x)=\log\frac{z+\sqrt{z^2-1}}{2}+H_{2m}-H_m\nonumber\\
&&\hspace{2cm}+i(-1)^{m+1}\frac{2^{m+\frac12}}{\sqrt{\pi}}m!(z^2-1)^{\frac{m}{2}+\frac14}
\sum_{k=0}^{m-1}\frac{2k+1}{(k+m)!(m-k)(k+m+1)}Q_{m-\frac12}^{k+\frac12}(z)
\nonumber
\\[0.2cm]
&&\hspace{2cm}+i(-1)^{m}\frac{2^{m+\frac12}}{\sqrt{\pi}}(m-1)!(z^2-1)^{\frac{m}{2}+\frac14}
\sum_{n=1}^{\infty}\frac{(n+m)n!C_n^m(x)}{(2m+n)!}Q_{n+m-\frac12}^{m+\frac12}(z).
\label{zmxgeg}
\end{eqnarray}
\end{cor}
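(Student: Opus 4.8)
The plan is to specialize Lemma~\ref{biloggeg}, namely equation~\eqref{limitderivGeg}, to $p=0$ with $\mu=m\in\N$, and then collect terms. With $p=0$ the left-hand side is $\log(z-x)$, the common prefactor $(z^2-1)^{\frac{p+\mu}{2}+\frac14}$ becomes $(z^2-1)^{\frac{m}{2}+\frac14}$, and $p!=0!=1$. The constant-coefficient term $(z-x)^p\left(\log\frac{z+\sqrt{z^2-1}}{2}+2H_{2p+2\mu}+H_p-H_{p+\mu}\right)$ reduces, using $H_0=0$, to $\log\frac{z+\sqrt{z^2-1}}{2}+2H_{2m}-H_m$. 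The third sum in \eqref{limitderivGeg}, over $0\le n\le p-1$, is empty; the second and fourth sums, over $0\le n\le p$, collapse to their $n=0$ summands; and the fifth sum runs over $n\ge 1$. Since $C_0^m(x)=1$ by \eqref{gegpolydefn}, the $n=0$ summand of the fourth sum is exactly the finite $k$-sum $\sum_{k=0}^{m-1}\frac{2k+1}{(k+m)!(m-k)(k+m+1)}Q_{m-\frac12}^{k+\frac12}(z)$ with the stated prefactor, and the fifth sum yields, after the substitution $p=0$, the remaining infinite $C_n^m(x)$-series.

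The only nontrivial point is the $n=0$ summand of the second sum. Collecting the constants (with $H_{p+n+2\mu}+H_{p-n}=H_{2m}$ at $n=p=0$), it equals $i(-1)^m\frac{2^{m+\frac12}}{\sqrt\pi}\frac{m!}{(2m)!}H_{2m}\,(z^2-1)^{\frac{m}{2}+\frac14}Q_{m-\frac12}^{m+\frac12}(z)$. To handle this I would substitute the closed form \eqref{Qmhmph} for $Q_{m-\frac12}^{m+\frac12}(z)$: this cancels the $(z^2-1)$-powers and turns the summand into a pure constant, whereupon the phase contracts ($i(-1)^m\cdot\expe^{i\pi(m+\frac12)}=-1$), and after rewriting $\Gamma(m+\frac12)=\sqrt\pi\,(2m-1)!/(2^{2m-1}(m-1)!)$ via the Legendre duplication formula \cite[(5.5.5)]{NIST:DLMF}, the powers of $2$, the $\sqrt\pi$, and the factorials all cancel---using $\frac{m!}{(m-1)!}\cdot\frac{(2m-1)!}{(2m)!}=\frac12$ against the leftover factor $2$---leaving the summand equal to $-H_{2m}$. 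Adding this to the $2H_{2m}$ already present in the constant-coefficient term yields the asserted $H_{2m}-H_m$, and the corollary follows.

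The substance is this one constant evaluation; everything else is $p=0$ bookkeeping. The main risk of error lies precisely there: obtaining exactly $-H_{2m}$ (rather than some other rational multiple of $H_{2m}$) hinges on matching the two half-integer powers of $2$ against the duplication formula and on the phase product being exactly $-1$. As a consistency check I would also confirm that the $\mu\to 0$ version of \eqref{limitderivGeg} at $p=0$---obtained through the limit \eqref{limitGegChebyI} and the special value \eqref{Qhh} (equivalently \cite[(14.5.17)]{NIST:DLMF})---recovers the earlier Chebyshev corollary for $\log(z-x)$.
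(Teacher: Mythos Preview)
Your proposal is correct and follows exactly the paper's own route: set $p=0$ in \eqref{limitderivGeg}, then use \eqref{Qmhmph} together with the Legendre duplication formula \cite[(5.5.5)]{NIST:DLMF} to collapse the $n=0$ term of the second sum to the constant $-H_{2m}$. You have in fact spelled out the phase and factorial cancellations that the paper's one-line proof leaves implicit, and your consistency check against the Chebyshev corollary is a welcome extra.
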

\begin{proof}
Let $p=0$ in \eqref{limitderivGeg} using the duplication theorem \cite[(5.5.5)]{NIST:DLMF}, and
\eqref{Qmhmph}, completes the proof.
\end{proof}

\begin{cor}
\noindent Let 
{$z\in\C\setminus(-\infty,1]$,
$z-x\in\C\setminus(-\infty,0]$, $x$ lies inside the ellipse with foci at $\pm 1$ that passes through $z$.}
Then
\[
\log(z-x)=\log\frac{z+\sqrt{z^2-1}}{2}+\frac12-\frac{\sqrt{z^2-1}}{z+\sqrt{z^2-1}}-2\sum_{n=1}^\infty 
\frac{U_n(x)}{n(n+2)}
\frac{z+(n+1)\sqrt{z^2-1}}{(z+\sqrt{z^2-1})^{n+1}}.
\]
\end{cor}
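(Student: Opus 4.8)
The plan is to set $m=1$ in the logarithmic Gegenbauer expansion \eqref{zmxgeg} (equivalently, to specialize equation \eqref{limitderivGeg} of Lemma \ref{biloggeg} to $p=0$ and $\mu=1$) and then evaluate the three resulting pieces. With $m=1$ the finite sum in \eqref{zmxgeg} collapses to its single $k=0$ term, the harmonic-number constant becomes $H_2-H_1=\frac12$, and the tail sum runs over $n\ge1$ with $C_n^m=C_n^1=U_n$.

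First I would handle the constant, which is just $\log\frac{z+\sqrt{z^2-1}}{2}+\frac12$. (If one prefers to work from \eqref{limitderivGeg} directly, the same $\frac12$ emerges by combining the prefactor $2H_2+H_0-H_1=2$ of the leading term with the $n=0$ entry of its second sum, which carries $Q_{1/2}^{3/2}(z)$ and contributes $-\frac32$ via the $n=0$ case of \eqref{Qnhnp1} or via \eqref{Qmhmph}.) Next, the $k=0$ term of the finite sum is proportional to $Q_{1/2}^{1/2}(z)$; inserting the closed form \eqref{Qhh}, and cancelling the accompanying $(z^2-1)$, $2$, $i$ and $\sqrt\pi$ factors, leaves exactly $-\frac{\sqrt{z^2-1}}{z+\sqrt{z^2-1}}$.

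The only remaining computation is the tail series. Here I would write $C_n^1(x)=U_n(x)$ (the definition in \cite[(18.7.4)]{NIST:DLMF}), carry out the factorial simplification $\frac{(n+1)(n-1)!}{(n+2)!}=\frac{1}{n(n+2)}$, and substitute the closed form \eqref{Qnhnp1} for $Q_{n+\frac12}^{3/2}(z)$. Once more the $(z^2-1)^{3/4}$ factors cancel and the scalar constants collapse (using $2^{3/2}\sqrt{\pi/2}=2\sqrt\pi$), turning the series into $-2\sum_{n=1}^\infty\frac{U_n(x)}{n(n+2)}\frac{z+(n+1)\sqrt{z^2-1}}{(z+\sqrt{z^2-1})^{n+1}}$. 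Adding the three pieces gives the claimed identity, and the region of validity is inherited verbatim from Lemma \ref{biloggeg}.

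I expect no conceptual obstacle, since this is a pure specialization of an already proved expansion; the only real care needed is the bookkeeping of multiplicative constants, as each surviving term carries a factor $i$, a power of $2$, a factor $\sqrt\pi$ (or $\sqrt{\pi/2}$ from the Legendre evaluations), and a power of $(z^2-1)$, which must all be verified to cancel precisely against the corresponding factors in \eqref{Qhh} and \eqref{Qnhnp1}. As an independent check, using $U_n(x)-U_{n-2}(x)=2T_n(x)$ together with $\frac{z+(n+1)\sqrt{z^2-1}}{(z+\sqrt{z^2-1})^{n+1}}=\frac{n+2}{2(z+\sqrt{z^2-1})^n}-\frac{n}{2(z+\sqrt{z^2-1})^{n+2}}$, one sees that the series telescopes to reproduce the earlier Chebyshev corollary $\log(z-x)=\log\frac{z+\sqrt{z^2-1}}{2}-2\sum_{n=1}^\infty\frac{T_n(x)}{n(z+\sqrt{z^2-1})^n}$.
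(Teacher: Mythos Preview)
Your proposal is correct and follows exactly the paper's approach: set $m=1$ in \eqref{zmxgeg} (equivalently $p=0$, $\mu=1$ in \eqref{limitderivGeg}), then evaluate the resulting associated Legendre functions via \eqref{Qhh} and \eqref{Qnhnp1} and simplify. Your added telescoping check against the Chebyshev corollary is a pleasant bonus not present in the paper; note also that your factorial simplification $\frac{(n+1)(n-1)!}{(n+2)!}=\frac{1}{n(n+2)}$ correctly comes from \eqref{limitderivGeg}, whereas the displayed \eqref{zmxgeg} carries $n!$ in place of $(n-1)!$, an apparent misprint you have silently circumvented.
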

\begin{proof}
Let $\mu=1$ in  \eqref{zmxgeg}, then use \eqref{Qhh}, \eqref{Qnhnp1}.
Simplification completes the proof.
\end{proof}

\section{Azimuthal Fourier and Gegenbauer
polynomial 
expansions of binomial
and logarithmic fundamental solutions}
\label{FundExpansions}

{
The behavior of a logarithmic fundamental
solution of the polyharmonic equation on
even-dimensional Euclidean space $\R^d$ in 
a rotationally-invariant coordinate system
and in a Vilenkin polyspherical coordinate
system are given respectively by \eqref{lft1}, \eqref{lft2}. 
For rotationally-invariant
coordinate systems \eqref{rotatioanallyinvariant}, recall that $R,R'$ are the 
cylindrical radii and $\phi$, $\phi'$ are azimuthal angles  and that
$\chi$, the toroidal parameter, is defined 
in \eqref{chidefn}.
Furthermore, for the 
polyharmonic equation \eqref{polyharmoniceq}, 
$k\in\N$ is the 
power of the positive Laplacian.
For the definitions of the special functions
and numbers used in the results presented
in this section, see Section 
\ref{Specialfunctionsandorthogonalpolynomials}.
We now present the expression for the azimuthal
Fourier expansion of a logarithmic fundamental
solution of the polyharmonic equation.
}


{
\begin{thm}
Let $\bfx,\bfxp\in\R^d$ with $d\in 2\N$,
$p=k-d/2\in\N_0$.
Then the azimuthal Fourier expansion of
a logarithmic fundamental solution
of the polyharmonic equation $\li_k^d$ in a 
rotationally-invariant coordinate system on 
Euclidean space $\R^d$ is given by
\begin{eqnarray}\label{fourierlog}
&&\hspace{-1.1cm}\li_k^d(\bfx,\bfxp)=
\frac{ip!}{\sqrt{2\pi}}\left(2RR^\prime\right)^p
(\chi^2-1)^{\frac{p}{2}+\frac14}\Biggl[\left(\log
\left(RR^\prime\right) + \log (\chi+\sqrt{\chi^2-1})+2H_{2p}- H_p - H_{\frac{d}{2}+p-1}+H_{\frac{d}{2}-1}\right) \nonumber \\
&& \hspace{1cm}\times (-1)^{p+1}
\sum_{m=0}^p\frac{\epsilon_m(-1)^m \cos{\left(m (\phi-\phi')\right)}}{(p-m)!(p+m)!}Q_{m-\frac12}^{p+\frac12}(\chi)  \nonumber \\
&&\hspace{+0.20cm}+(-1)^p
\sum_{m=0}^p
\frac{\epsilon_m(-1)^m \cos{\left(m (\phi-\phi')\right)}}{(p-m)!(p+m)!}
\left(
H_{p+m}+H_{p-m}
\right)
Q_{m-\frac12}^{p+\frac12}(\chi)
\nonumber\\[0.2cm]
&&\hspace{+0.20cm}
+(-1)^{p+1}\sum_{m=0}^{p-1}
\frac{\epsilon_m(-1)^m \cos{\left(m (\phi-\phi')\right)}}{(p+m)!} 
\sum_{k=0}^{p-m-1}\frac
{(2m+2k+1)\left[1+\frac{k!(p+m)!}{(2m+k)!(p-m)!}\right]}
{k!(p-m-k)(p+m+k+1)}
Q_{m-\frac12}^{m+k+\frac12}(\chi)
\nonumber\\[0.2cm]
&&\hspace{+0.20cm} 
+2
(-1)^{p+1}\sum_{m=1}^{p}\frac{(-1)^m \cos{\left(m (\phi-\phi')\right)}}{(p-m)!}
\sum_{k=0}^{m-1}\frac
{2k+1}
{(m+k)!(p-k)(p+k+1)}
Q_{m-\frac12}^{k+\frac12}(\chi)
\nonumber\\[0.2cm]
&&\hspace{+0.20cm} 
+2
\sum_{m=p+1}^\infty \frac{(m-p-1)!\,\cos{\left(m (\phi-\phi')\right)}}{(p+m)!}
Q_{m-\frac12}^{p+\frac12}(\chi) \Biggr].
\end{eqnarray}
\end{thm}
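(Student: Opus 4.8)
\emph{Proof proposal.} The plan is to substitute the already-established one-dimensional Chebyshev (Fourier cosine) expansions into the closed form \eqref{lft1} for $\li_k^d$ in a rotationally-invariant coordinate system. Concretely, set $z=\chi$ and $x=\cos(\phi-\phi')$: the first summand of \eqref{lft1}, namely $(2RR')^p\bigl(\tfrac12\log(2RR')-\beta_{p,d}\bigr)(\chi-\cos(\phi-\phi'))^p$, is handled by the binomial expansion \eqref{Eulerpolyexp} of the Euler kernel $(z-x)^p$, while the second summand $\tfrac12(2RR')^p(\chi-\cos(\phi-\phi'))^p\log(\chi-\cos(\phi-\phi'))$ is handled by the expansion \eqref{limitderivCheby} of the logarithmic kernel $(z-x)^p\log(z-x)$. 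First I would verify that both substitutions are legitimate: since $\chi>1$ and $\cos(\phi-\phi')\in[-1,1]$, the point $x=\cos(\phi-\phi')$ lies inside the ellipse with foci $\pm1$ through $\chi$ (which meets the real axis at $\pm\chi$), and $z-x=\chi-\cos(\phi-\phi')\ge\chi-1>0$ lies in $\C\setminus(-\infty,0]$, so the hypotheses of both lemmas hold. Everywhere a Chebyshev polynomial $T_m(\cos(\phi-\phi'))$ then appears I replace it by $\cos(m(\phi-\phi'))$.

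The core step is to merge the two contributions that are still proportional to the unexpanded power $(\chi-\cos(\phi-\phi'))^p$: the one carried over from the binomial summand and the ``leading'' piece $(\chi-\cos(\phi-\phi'))^p\bigl(\log\tfrac{\chi+\sqrt{\chi^2-1}}{2}+2H_{2p}\bigr)$ produced by \eqref{limitderivCheby}. Their combined scalar coefficient is $\tfrac12\log(2RR')-\beta_{p,d}+\tfrac12\bigl(\log\tfrac{\chi+\sqrt{\chi^2-1}}{2}+2H_{2p}\bigr)$, and inserting $\beta_{p,d}=\tfrac12\bigl(H_p+H_{\frac{d}{2}+p-1}-H_{\frac{d}{2}-1}\bigr)$ together with $\log(2RR')-\log2=\log(RR')$ collapses this to $\tfrac12\bigl(\log(RR')+\log(\chi+\sqrt{\chi^2-1})+2H_{2p}-H_p-H_{\frac{d}{2}+p-1}+H_{\frac{d}{2}-1}\bigr)$, which is exactly half of the scalar in the first bracketed line of \eqref{fourierlog}. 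I then expand this surviving $(\chi-\cos(\phi-\phi'))^p$ once more by \eqref{Eulerpolyexp} to produce the factor $i(-1)^{p+1}\sqrt{2/\pi}\,p!(\chi^2-1)^{\frac{p}{2}+\frac14}\sum_{m=0}^p\frac{(-1)^m\epsilon_m\cos(m(\phi-\phi'))}{(p-m)!(p+m)!}Q_{m-\frac12}^{p+\frac12}(\chi)$, which gives the first two lines of the statement once the common prefactor $\frac{ip!}{\sqrt{2\pi}}(2RR')^p(\chi^2-1)^{\frac{p}{2}+\frac14}$ is pulled out (using $\tfrac12 i\sqrt{2/\pi}=i/\sqrt{2\pi}$).

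The four remaining $Q$-sums in \eqref{limitderivCheby} already carry the factor $(\chi^2-1)^{\frac{p}{2}+\frac14}$ and, after multiplication by $\tfrac12(2RR')^p$ and extraction of the same prefactor, produce the third, fourth and fifth bracketed groups of \eqref{fourierlog} essentially verbatim: the two lines with prefactors $i\sqrt{2/\pi}(-1)^p$ and $i\sqrt{2/\pi}(-1)^{p+1}$ give the $\bigl(H_{p+m}+H_{p-m}\bigr)$ sum and the double sum carrying the bracket $\bigl[1+\tfrac{k!(p+m)!}{(2m+k)!(p-m)!}\bigr]$, while the two lines with prefactor $i\,2^{3/2}/\sqrt\pi$ give the last two sums, the factor $2$ appearing there because $\tfrac12\cdot 2^{3/2}/\sqrt\pi=2/\sqrt{2\pi}$. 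The only genuine work is this final bookkeeping --- tracking the signs $(-1)^m$, the Neumann factors $\epsilon_m$, the factorial denominators and the upper $Q$-indices through the reindexing $n\mapsto m$, and in particular ensuring the $\beta_{p,d}$ harmonic numbers combine cleanly with $2H_{2p}$ and the stray $\log2$ is absorbed into $\log(RR')$. There is no analytic obstacle beyond the termwise validity already secured in the first step.
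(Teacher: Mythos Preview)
Your approach is exactly the one the paper uses: start from \eqref{lft1}, insert the Chebyshev expansions \eqref{Eulerpolyexp} and \eqref{limitderivCheby} with $z=\chi$, $x=\cos(\phi-\phi')$, and simplify. The additional detail you supply---the domain check, the merging of the $\beta_{p,d}$ harmonic numbers with $2H_{2p}$ and the absorption of $\log 2$ into $\log(RR')$, and the prefactor bookkeeping yielding $i/\sqrt{2\pi}$ and the residual factors of $2$---is precisely the ``simplifying'' the paper leaves implicit, and it is carried out correctly.
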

}
\begin{proof}
Beginning with \eqref{lft1}, applying the identities \eqref{Eulerpolyexp}, \eqref{limitderivCheby} 
and simplifying, completes the proof.
\end{proof}

{A logarithmic fundamental solution of the polyharmonic
equation expressed in a Vilenkin polyspherical coordinate system is given by \eqref{lft2}.
 Also recall that
$r$, $r'$ are the hyperspherical radii and 
$\cos\gamma=(\bfx,\bfxp)/(\|\bfx\|\|\bfxp\|)$ 
\eqref{sepang} is the separation
 angle \eqref{sepang}
 in a Vilenkin 
 polyspherical coordinate system \cite[Appendix B and references therein]{Cohl12pow}, and that 
$\zeta=(r^2+r'^2)/(2rr')$ is defined 
in \eqref{zetadef}.
We now give the Gegenbauer expansion for a logarithmic fundamental solution of the polyharmonic equation in Vilenkin polyspherical coordinate systems.}

\begin{thm} \label{gegexpand}
Let $\bfx,\bfxp\in\R^d$ with $d\in2\N$,
$p=k-d/2\in\N_0$.
Then the Gegenbauer expansion of
a logarithmic fundamental solution
of the polyharmonic equation $\li_k^d$ on 
even-dimensional 
Euclidean space $\R^d$ in a Vilenkin polyspherical
coordinate system is given by
\begin{eqnarray}
&&\hspace{-0.7cm}\li_k^d(\bfx,\bfxp)= i
\frac{2^{\mu-\frac12}}{\sqrt{\pi}} p! \Gamma(\mu) (\zeta^2-1)^{\frac{p+\mu}{2}+\frac14} \left(2rr^\prime\right)^p \Biggl[ \left(\log
\left(rr^\prime\right) + \log{\left(\zeta+\sqrt{\zeta^2-1}\right)}+2H_{2p+2\mu}+H_p-H_{p+\mu}-2\beta_{p,d}\right) \nonumber \\
&& \hspace{-0.5cm}\hspace{1.2cm} \times (-1)^{p+\mu+1}
\sum_{l=0}^p \frac{(-1)^l(l+\mu)C_l^\mu(\cos \gamma)}
{(p-l)!(l+p+2\mu)!}
Q_{l+\mu-\frac12}^{p+\mu+\frac12}(\zeta). \nonumber \\
&&\hspace{-0.5cm}\hspace{+0.20cm}+(-1)^{p+\mu}\sum_{l=0}^p
\frac{(-1)^l(l+\mu) C_l^\mu(\cos \gamma)}{(p-l)!(p+l+2\mu)!}
\left(
H_{p+l+2\mu}+H_{p-l}
\right)
Q_{l+\mu-\frac12}^{p+\mu+\frac12}(\zeta)
\nonumber\\[0.2cm]
&&\hspace{-0.5cm}\hspace{+0.20cm}
+(-1)^{p+\mu+1}
\sum_{l=0}^{p-1}
\frac{(-1)^l(l+\mu) C_l^\mu(\cos \gamma)}{(p+l+2\mu)!}\nonumber\\[-0.0cm]
&&\hspace{-0.5cm}\hspace{1.2cm}\times\sum_{k=0}^{p-l-1}\frac
{(2l+2k+2\mu+1)
}
{k!(p-l-k)(p+l+k+2\mu+1)}
\left[1+\frac{k!(p+l+2\mu)!}{(k+2l+2\mu)!(p-l)!}\right]
Q_{l+\mu-\frac12}^{k+l+\mu+\frac12}(\zeta)
\nonumber\\[-0.0cm]
&&\hspace{-0.5cm}\hspace{+0.20cm}
+ (-1)^{p+\mu+1}
\sum_{l=0}^{p}\frac{(-1)^l(l+\mu)C_l^\mu(\cos \gamma)}{(p-l)!}
\sum_{k=0}^{l+\mu-1}\frac
{2k+1}
{(l+k+\mu)!(p+\mu-k)(p+k+\mu+1)}
Q_{l+\mu-\frac12}^{k+\frac12}(\zeta)
\nonumber\\[0.2cm]
&&\hspace{-0.5cm}\hspace{+0.20cm}
+(-1)^\mu
\sum_{l=p+1}^\infty \frac{(l+\mu)(l-p-1)!C_l^\mu(\cos \gamma)}
{(p+l+2\mu)!}
Q_{l+\mu-\frac12}^{p+\mu+\frac12}(\zeta) \Biggr].
\end{eqnarray}
\end{thm}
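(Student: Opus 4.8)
The plan is to run the same argument that proves the rotationally-invariant expansion \eqref{fourierlog}, but starting from the Vilenkin polyspherical representation \eqref{lft2} and substituting the Gegenbauer (rather than Chebyshev) expansions of Section \ref{GegTnexpansions}. Fix $\mu\in\N$, set $z=\zeta$ and $x=\cos\gamma$, and note that since $\zeta>1$ and $\cos\gamma\in[-1,1]$ lies inside every ellipse with foci at $\pm1$ passing through $\zeta$, the hypotheses of both \eqref{binomGeg} and Lemma \ref{biloggeg} are met (for $r\ne r'$). First I would insert into \eqref{lft2}: the binomial expansion \eqref{binomGeg} for the Euler kernel $(\zeta-\cos\gamma)^p$, and \eqref{limitderivGeg} for the logarithmic kernel $(\zeta-\cos\gamma)^p\log(\zeta-\cos\gamma)$, re-expanding via \eqref{binomGeg} once more the $(z-x)^p$ factor that stands in front of the bracket in the first line of \eqref{limitderivGeg}. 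The term $(2rr')^p\bigl(\tfrac12\log(2rr')-\beta_{p,d}\bigr)(\zeta-\cos\gamma)^p$ then contributes one copy of the leading finite sum over $Q_{l+\mu-\frac12}^{p+\mu+\frac12}(\zeta)$ scaled by $\tfrac12\log(2rr')-\beta_{p,d}$, and $\tfrac12(2rr')^p(\zeta-\cos\gamma)^p\log(\zeta-\cos\gamma)$ contributes the four sums on lines $2$--$5$ of the claimed identity together with a second copy of that leading sum scaled by $\tfrac12\bigl(\log\tfrac{\zeta+\sqrt{\zeta^2-1}}{2}+2H_{2p+2\mu}+H_p-H_{p+\mu}\bigr)$.

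Next I would merge the two contributions to the leading finite sum. Factoring out $\tfrac12(2rr')^p(\zeta-\cos\gamma)^p$ and using $\log(2rr')+\log\tfrac{\zeta+\sqrt{\zeta^2-1}}{2}=\log(rr')+\log(\zeta+\sqrt{\zeta^2-1})$, the combined logarithmic constant is exactly $\log(rr')+\log(\zeta+\sqrt{\zeta^2-1})+2H_{2p+2\mu}+H_p-H_{p+\mu}-2\beta_{p,d}$, which is the first bracketed line of the theorem (one may, but need not, substitute $\beta_{p,d}=\tfrac12(H_p+H_{\frac{d}{2}+p-1}-H_{\frac{d}{2}-1})$). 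Then I would pull out the common prefactor: from \eqref{binomGeg} it is $\tfrac12\cdot\frac{2^{\mu+\frac12}}{\sqrt\pi}\expe^{i\pi(p-\mu-\frac12)}\Gamma(\mu)\,p!\,(\zeta^2-1)^{\frac{p+\mu}{2}+\frac14}$; for integer $\mu$ the phase collapses, $\expe^{i\pi(p-\mu-\frac12)}=-i(-1)^{p+\mu}=i(-1)^{p+\mu+1}$, and $\tfrac12\cdot2^{\mu+\frac12}=2^{\mu-\frac12}$, so the prefactor becomes $i\frac{2^{\mu-\frac12}}{\sqrt\pi}p!\,\Gamma(\mu)(\zeta^2-1)^{\frac{p+\mu}{2}+\frac14}$ with a leftover $(-1)^{p+\mu+1}$ on the leading line, exactly as displayed. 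The same prefactor governs the other four sums coming from \eqref{limitderivGeg} (the extra $\tfrac12$ again absorbed into $2^{\mu-\frac12}$), and after matching the signs $(-1)^{p+\mu}$, $(-1)^{p+\mu+1}$, $(-1)^\mu$ line by line the identity follows.

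The only real work is the bookkeeping: carrying the single factor $\tfrac12$ correctly through the powers of two, verifying the collapse of $\expe^{i\pi(p-\mu-\frac12)}$ to a real sign, and checking term-by-term that every harmonic-number combination ($2H_{2p+2\mu}$, $H_p$, $H_{p+\mu}$, $H_{p+l+2\mu}$, $H_{p-l}$) and every factorial in the denominators transcribes from \eqref{limitderivGeg} under $z\mapsto\zeta$, $x\mapsto\cos\gamma$. No analytic input beyond \eqref{binomGeg}, Lemma \ref{biloggeg}, and the definition of $\beta_{p,d}$ is required; the hypothesis $\mu\in\N$ is inherited from Lemma \ref{biloggeg}, the $\mu=0$ case being already covered by \eqref{fourierlog} via the limit relation \eqref{limitGegChebyI}.
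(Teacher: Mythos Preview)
Your approach is correct and is exactly what the paper does: its proof is the one-line statement that applying \eqref{limitderivGeg} and \eqref{binomGeg} to \eqref{lft2} completes the proof, and you have simply spelled out the bookkeeping that this entails. Your handling of the combined logarithmic constant, the collapse of $\expe^{i\pi(p-\mu-\frac12)}$ for integer $\mu$, and the absorption of the factor $\tfrac12$ into $2^{\mu-\frac12}$ are all correct.
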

\begin{proof}
Applying the identities \eqref{limitderivGeg} and \eqref{binomGeg} to \eqref{lft2} completes the proof.
\end{proof}

{For the polyharmonic equation on even-dimensional Euclidean space $\R^d$ with 
$k \ge \frac{d}{2}$, apart from multiplicative constants, the algebraic 
expression for a binomial fundamental solution of the polyharmonic
equation $\jii_k^d:\R^d\times\R^d\to\R$ 
is given by
$
\jii_k^d(\bfx,\bfxp):=\|\bfx-\bfxp\|^{2k-d},
$
with $2k-d\in2\N$.
By expressing $\jii_k^d$ in a rotationally-invariant coordinate system one has \eqref{unlogfourseries}
which leads to the following result for an azimuthal
Fourier expansion of the binomial fundamental
solution of the polyharmonic equation.
}
{
\begin{thm} \label{bifourier}
Let $\bfx,\bfxp\in\R^d$ with $d\in2\N$, $p=k-d/2\in\N_0$.
Then the azimuthal Fourier expansion of
a binomial fundamental solution
of the polyharmonic equation $\jii_k^d$ on 
Euclidean space $\R^d$ is given by
\begin{eqnarray} 
&&\jii_k^d(\bfx, \bfxp)=i(-1)^{p+1}\sqrt{\frac{2}{\pi}}p!(2RR')^{p}{(\chi^2-1)^{\frac{p}{2}+\frac14}}
\sum_{n=0}^p\frac{(-1)^n\epsilon_n \cos\left( n(\phi - \phi')\right)}{(p-n)!(p+n)!}Q_{n-\frac12}^{p+\frac12}(\chi).
\end{eqnarray}
\end{thm}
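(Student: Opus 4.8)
The plan is to obtain the claim by direct substitution of the already-established Chebyshev expansion of the Euler kernel into the rotationally-invariant form of the binomial kernel. Recall from \eqref{unlogfourseries} that in a rotationally-invariant coordinate system \eqref{rotatioanallyinvariant} one has
\[
\jii_k^d(\bfx,\bfxp)=(2RR')^p\bigl[\chi-\cos(\phi-\phi')\bigr]^p,
\]
where $p=k-d/2\in\N_0$ and $\chi>1$ is the hypertoroidal parameter \eqref{chidefn}. First I would invoke \eqref{Eulerpolyexp} with the substitutions $z=\chi$ and $x=\cos(\phi-\phi')$. Since $\chi>1$ we certainly have $\chi\in\C\setminus(-\infty,1]$; and because the sum in \eqref{Eulerpolyexp} truncates at $n=p$, that identity is a polynomial identity in $x$, valid for every $x\in\C$, so here there is no convergence issue and no role for the Szeg\H{o} ellipse. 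Each term $Q_{n-\frac12}^{p+\frac12}(\chi)$ is well-defined for $\chi>1$ by \eqref{defnQLegendre}, so the right-hand side makes sense term by term.

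Next I would convert the Chebyshev polynomials into Fourier cosines via the representation $T_n(\cos\theta)=\cos(n\theta)$ with $\theta=\phi-\phi'$, which turns $T_n(\cos(\phi-\phi'))$ into $\cos(n(\phi-\phi'))$. Multiplying the resulting expansion of $[\chi-\cos(\phi-\phi')]^p$ by the prefactor $(2RR')^p$ and writing $(\chi^2-1)^{\frac{p}{2}+\frac14}$ in place of $(z^2-1)^{\frac{p}{2}+\frac14}$ then reproduces verbatim the overall constant $i(-1)^{p+1}\sqrt{2/\pi}\,p!$ and the coefficients $(-1)^n\epsilon_n/((p-n)!(p+n)!)$, yielding exactly the stated formula.

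There is essentially no hard step here: the proof reduces to the substitution $z=\chi$, $x=\cos(\phi-\phi')$ in \eqref{Eulerpolyexp} together with \eqref{unlogfourseries} and the cosine representation of $T_n$, followed by one line of collecting constants. The only points worth checking are cosmetic bookkeeping — that the power $(\chi^2-1)^{\frac{p}{2}+\frac14}$ tracks correctly through the substitution, and that the Neumann factor $\epsilon_n=2-\delta_{n,0}$ (already built into \eqref{Eulerpolyexp}) is carried along consistently, accounting for the isolated $m=0$ term in the displayed sum. Thus the argument is the one-line proof: apply \eqref{Eulerpolyexp} to \eqref{unlogfourseries} with $T_n(\cos(\phi-\phi'))=\cos(n(\phi-\phi'))$ and simplify.
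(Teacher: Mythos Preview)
Your proposal is correct and follows exactly the same route as the paper's own proof: start from \eqref{unlogfourseries}, apply \eqref{Eulerpolyexp} with $z=\chi$, $x=\cos(\phi-\phi')$, and use $T_n(\cos\theta)=\cos(n\theta)$. There is nothing to add.
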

}
{
\begin{proof}
Starting with \eqref{unlogfourseries}, and using the binomial expansion \eqref{Eulerpolyexp} with the Fourier cosine representation of the Chebyshev polynomials of the first kind completes
the proof.
\end{proof}
}


{
The expression $\jii_k^d$ can also be represented in any Vilenkin polyspherical coordinate system \eqref{unlogVilen} which likewise can be expanded using Gegenbauer polynomials. This is presented as in the following result.}

{
\begin{thm}\label{bigeg}
Let $\bfx,\bfxp\in\R^d$ with $d\in2\N$, 
$p=k-d/2\in\N_0$.
Then the Gegenbauer expansion of
a binomial fundamental solution
of the polyharmonic equation $\jii_k^d$ on 
Euclidean space $\R^d$ is given by
\begin{eqnarray} 
&&\hspace{-0.5cm}\jii_k^d(\bfx,\bfxp)=i
(-1)^{p-\frac{d}{2}}\frac{2^{\frac{d-1}{2}}}{\sqrt{\pi}}
p!(\tfrac{d}{2}-2)! (2rr')^p 
\pdh \nonumber \\
&&\hspace{6cm}\times \sum_{n=0}^p \frac{(-1)^n(n+\frac{d}{2}-1)C_n^{\frac{d}{2}-1}(\cos\gamma)}
{(p-n)!(n+p+d-2)!}
Q_{n+\frac{d-3}{2}}^{p+\frac{d-1}{2}}(\zeta).
\end{eqnarray}
\end{thm}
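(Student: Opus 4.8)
The plan is to obtain this identity as a direct specialization of the Gegenbauer binomial expansion \eqref{binomGeg}. First I would start from the Vilenkin polyspherical representation \eqref{unlogVilen}, $\jii_k^d(\bfx,\bfxp)=(2rr')^p(\zeta-\cos\gamma)^p$, which reduces the task to expanding the binomial $(\zeta-\cos\gamma)^p$ in Gegenbauer polynomials. Since $d\in2\N$ with $d\ge 4$, the value $\mu:=\tfrac d2-1$ lies in $\N\subset(-\tfrac12,\infty)\setminus\{0\}$, so \eqref{binomGeg} applies with $z=\zeta$, $x=\cos\gamma$, and this choice of $\mu$. (The remaining case $d=2$, where $\mu=0$, is the Chebyshev situation and is covered instead by Theorem \ref{bifourier} together with \eqref{limitGegChebyI}, so it need not be treated here.)

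The second step is pure bookkeeping: substitute $\mu=\tfrac d2-1$ into the constants of \eqref{binomGeg}. One has $2^{\mu+\frac12}=2^{\frac{d-1}{2}}$, $\Gamma(\mu)=(\tfrac d2-2)!$, $\expe^{i\pi(p-\mu-\frac12)}=\expe^{i\pi(p-\frac d2)}\expe^{i\pi/2}=i(-1)^{p-\frac d2}$, and $\Gamma(n+p+2\mu+1)=(n+p+d-2)!$, while $n+\mu=n+\tfrac d2-1$, $C_n^\mu=C_n^{\frac d2-1}$, and $Q_{n+\mu-\frac12}^{p+\mu+\frac12}(\zeta)=Q_{n+\frac{d-3}{2}}^{p+\frac{d-1}{2}}(\zeta)$. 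This reproduces every factor in the claim except the power of $\zeta^2-1$.

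The third step handles that power. From \eqref{binomGeg} the factor is $(\zeta^2-1)^{\frac{p+\mu}{2}+\frac14}$, and with $\mu=\tfrac d2-1$ the exponent equals $\tfrac p2+\tfrac{d-1}{4}=\tfrac12\bigl(p+\tfrac{d-1}{2}\bigr)$, so the factor equals $\bigl(\sqrt{\zeta^2-1}\,\bigr)^{p+\frac{d-1}{2}}$. Invoking \eqref{zetasquare}, $\sqrt{\zeta^2-1}=\frac{r_>^2-r_<^2}{2rr'}$, this becomes $\pdh$, exactly as in the statement. Collecting the three steps and restoring the overall factor $(2rr')^p$ from \eqref{unlogVilen} gives the asserted expansion.

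I do not anticipate a genuine obstacle; the content is entirely a parameter specialization of \eqref{binomGeg}. The only points deserving a word of care are: (i) the branch convention --- \eqref{binomGeg} is stated for $\zeta\in\C\setminus(-\infty,1]$, but a fundamental solution is also evaluated at $\zeta=1$ (i.e.\ $r=r'$); one resolves this by noting that $(\zeta-\cos\gamma)^p$ is a polynomial and that, as already visible in the Chebyshev case \eqref{Eulerpolyexp}, each summand $(\zeta^2-1)^{\frac{p+\mu}{2}+\frac14}Q_{n+\mu-\frac12}^{p+\mu+\frac12}(\zeta)$ is polynomial in $\zeta$, so the finite identity extends to $\zeta\ge1$ by continuity; and (ii) tracking the phase $\expe^{i\pi(p-\mu-\frac12)}$ carefully so that it lands on $i(-1)^{p-d/2}$ and not on a sign-reversed variant.
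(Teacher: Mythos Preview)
Your proposal is correct and follows essentially the same approach as the paper: start from the Vilenkin polyspherical representation \eqref{unlogVilen}, apply the Gegenbauer binomial expansion \eqref{binomGeg} with $\mu=\tfrac{d}{2}-1$, and then rewrite the $(\zeta^2-1)$ factor via \eqref{zetasquare}. Your additional remarks on the $d=2$ case, the phase tracking, and the continuity at $\zeta=1$ go beyond what the paper states but are consistent with it.
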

}
{
\begin{proof}
The Gegenbauer polynomial expansion of Lemma \ref{binomGeg} with $\mu = d/2-1$, is combined with a fundamental solution written in a Vilenkin polyspherical coordinate system and using
\eqref{zetasquare} completes the proof.
\end{proof}
}


\section{Global analysis 
on $\R^d$ of standard and generalized Hopf Vilenkin polyspherical coordinate systems}
\label{secstangenHopf}

\medskip
{Now we study some of the particular details which 
will arise in the study of binomial and logarithmic fundamental solutions of the polyharmonic equation
in standard polyspherical coordinates and generalized Hopf coordinates. We will utilize the addition
theorem for hyperspherical harmonics to expand
the critical Gegenbauer polynomial (that with an order equal to $d/2-1$) over the product of normalized hyperspherical harmonics in that particular Vilenkin
polyspherical coordinate system.
The reason we used the critical order $d/2-1$ is that
these Gegenbauer polynomials with this particular order provide a basis for analytic solutions on $\Si_r^{d-1}$.} 
For $d\ge 3$, the addition theorem for hyperspherical harmonics is given by (for a proof see \cite{WenAvery}, \cite[\S 10.2.1]{FanoRau})
\begin{equation}
C_n^{d/2-1}(\cos\gamma)=\frac{2(d-2)\pi^{d/2}}
{(2n+d-2)\Gamma(d/2)}\sum_{K}Y_n^K(\wbfx)\overline{Y_n^K(\wbfxp)},
\label{addthmhypsph}
\end{equation}
where $K$ stands for a set of $(d-2)$-quantum numbers
identifying harmonics
for a given value of ${n}\in\N_0$, and $\cos\gamma$ is the cosine 
of the separation angle \eqref{sepang} between two arbitrary
vectors $\bfx,\bfxp\in\R^d$.
The functions
$Y_{{n}}^{K}:\Si^{d-1}\to\C$ are the normalized hyperspherical harmonics.
Normalization of the hyperspherical harmonics is achieved through the integral
\[
\int_{\Si^{d-1}}Y_n^K(\wbfx)\overline{Y_n^K(\wbfx)}d\Omega=1,
\]
where $d\Omega$ is the Riemannian volume measure on $\Si^{d-1}$.


First we will treat standard polyspherical coordinates and then we will treat generalized Hopf coordinates.
Both of these polyspherical coordinate systems and many of their various properties are described in 
\cite[Appendix B]{Cohl12pow}.

\subsection{Standard polyspherical coordinates}

{Here we review details connected with 
standard Vilenkin polyspherical coordinates. These coordinates 
and as well the general Vilenkin polyspherical coordinates are described carefully in \cite[Appendix B]{Cohl12pow}, and we will not depart from the description and usage of standard polyspherical coordinates described therein.}
Standard polyspherical coordinates are a generalization of the spherical coordinate system that is most commonly encountered in multi-dimensional calculus.
{
What we refer to as {\it standard polyspherical coordinates}
are given by
\begin{gather}
x_{1} = r\cos\theta_1,\nonumber\\
x_{2} = r\sin\theta_1\cos\theta_2,\nonumber\\
x_{3} = r\sin\theta_1\sin\theta_2\cos\theta_3,\nonumber\\
\cdots \cdots\cdots\cdots\cdots\cdots\cdots\cdots\nonumber\\
x_{d-2} = r\sin\theta_1\cdots\sin\theta_{d-3}\cos\theta_{d-2},\nonumber\\
x_{d-1} = r\sin\theta_1\cdots\sin\theta_{d-3}\sin\theta_{d-2}\cos\phi,\nonumber\\
x_{d} = r\sin\theta_1\cdots\sin\theta_{d-3}\sin\theta_{d-2}\sin\phi,\label{standardsph}
\end{gather}
where $\theta_i\in[0,\pi]$ for
$1\le i \le d-2$ and
$\phi\in[-\pi,\pi)$.}
In standard polyspherical coordinates, the
normalized hyperspherical harmonics can be written as \cite[(B.19)]{Cohl12pow}
\begin{equation}\label{normhyper}
    Y_l^K(\hat{{\bf x}}) = \frac{ \expe^{ i m \phi}}{\sqrt{2 \pi}} \prod_{j=1}^{d-2} \Theta_j^d(l_j,l_{j+1};\theta_j),
\end{equation}
where  \cite[(B.20)]{Cohl12pow}
\begin{align} \label{thetaGeg}
\Theta_j^d (l_j,l_{j+1};\theta_j) :=& \frac{ \Gamma \left( l_{j+1} + \frac{d-j+1}{2} \right)}{2l_{j+1}+d-j-1} \sqrt{ \frac{2^{2l_{j+1}+d-j-1} (2 l_j+d-j-1)(l_j-l_{j+1})!}{ \pi (l_j+l_{j+1} +d-j-2)!}} \nonumber \\
& \times  (\sin \theta_j)^{l_{j+1}} C_{l_j-l_{j+1}}^{l_{j+1} + (d-j-1)/2}( \cos \theta_j).
\end{align}
The addition theorem 
for hyperspherical harmonics \eqref{addthmhypsph} involves the product 
$Y_l^K(\bfx)\overline{Y_l^K(\bfxp)}$, so we introduce a convenient notation, namely 
$\Omega_k^d:\N_0\times\Z\times[0,\pi]^2\to\R$ is defined by 
\begin{equation} \label{standpoly}
\OmeH{k}{l_{k+1}}:=
\Theta_k^d(l_k,l_{k+1};\theta_k)
\Theta_k^d(l_k,l_{k+1};\theta_k').
\end{equation}

\subsubsection{Standard polyspherical coordinates multi-sum reversal lemmas}

{
An important ingredient in the production of the binomial and logarithmic addition functions is reversing the order of the multi-sums that appear when the Gegenbauer polynomials are expanded as multi-sums of hyperspherical harmonics.  This allows us to compare the Fourier coefficients relying on the azimuthal quantum number $m$.
}
{
\begin{lem} \label{multisum}
Let $p \in \mathbb{N}_0$. Then the multi-sum $\sf Y_1$, over the allowed quantum numbers for standard polyspherical hyperspherical harmonics
\eqref{normhyper}
defined by
\begin{align} 
    {\sf Y}_1 := \sum_{l=0}^p \sum_K &= \sum_{l=0}^p \sum_{l_2=0}^l \cdots \sum_{l_{d-2}=0}^{l_{d-3}} \sum_{m=0}^{l_{d-2}},
\end{align}
can then be re-expressed with the sum order reversed as
\begin{align}
    {\sf Y}_1= \sum_{m=0}^p \sum_{l_{d-2}=m}^p \cdots \sum_{l_2=l_3}^p \sum_{l=l_2}^p.
\end{align}
\end{lem}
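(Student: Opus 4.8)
The plan is to prove the identity by recognizing it as a standard instance of reversing the order of summation in a nested (iterated) sum, where the constraint region is a chain of inequalities. First I would write out explicitly the index set being summed over on the left-hand side. The summand ranges over tuples $(l, l_2, \ldots, l_{d-2}, m)$ of non-negative integers subject to the single chain of inequalities
\[
p \ge l \ge l_2 \ge l_3 \ge \cdots \ge l_{d-2} \ge m \ge 0,
\]
because the inner sums read $\sum_{l=0}^p$, then $\sum_{l_2=0}^l$ (so $0 \le l_2 \le l$), and so on down to $\sum_{m=0}^{l_{d-2}}$ (so $0 \le m \le l_{d-2}$). The key observation is that this region is symmetric in description: it is simply the set of weakly decreasing chains from $p$ down to $0$ passing through the listed variables, and nothing about that region privileges the order in which we peel off the variables.

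The main step is then to re-iterate the same region in the opposite order, peeling off $m$ first (outermost) and $l$ last (innermost). Summing first over $m$: from the chain, $m$ can be anything with $0 \le m \le l_{d-2} \le p$, so once all the other variables are fixed this is consistent, but as the outermost variable $m$ ranges over $0 \le m \le p$. Given $m$, the next variable $l_{d-2}$ must satisfy $m \le l_{d-2} \le p$, giving $\sum_{l_{d-2}=m}^p$. Proceeding inward, each $l_j$ (for $j = d-3, d-4, \ldots, 2$) must satisfy $l_{j+1} \le l_j \le p$, giving $\sum_{l_j = l_{j+1}}^{p}$, and finally $l$ satisfies $l_2 \le l \le p$, giving $\sum_{l = l_2}^{p}$. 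This reproduces exactly the claimed right-hand side ${\sf Y}_1 = \sum_{m=0}^p \sum_{l_{d-2}=m}^p \cdots \sum_{l_2=l_3}^p \sum_{l=l_2}^p$. To make this airtight I would state it as: both iterated sums equal $\sum_{(l,l_2,\ldots,l_{d-2},m) \in S}$ where $S = \{(l,l_2,\ldots,l_{d-2},m) \in \N_0^{d-1} : p \ge l \ge l_2 \ge \cdots \ge l_{d-2} \ge m \ge 0\}$, and invoke the fact that a finite sum over a finite index set does not depend on how the set is enumerated (Fubini/Tonelli for finite sums, or simply commutativity of finite addition).

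The only subtlety worth a sentence of care — and the closest thing to an obstacle — is the bookkeeping of which upper and lower limits attach to which variable when the nesting is inverted: one must check that the constraint $l_j \le l_{j-1}$ in the original ordering becomes the lower limit $l_j = l_{j+1}$ (reindexed appropriately) and the upper limit $p$ in the reversed ordering, i.e.\ that no spurious constraints are introduced or dropped. This is purely a matter of carefully tracking the chain of inequalities, and is transparent once $S$ is written down explicitly. There is no analytic content; the identity is a combinatorial reindexing, valid term by term for any summand whatsoever, so in particular for the product of $\Omega$-functions and Legendre functions that will be inserted later. Hence I would keep the proof short: exhibit $S$, note that the left side enumerates $S$ in the order $(l, l_2, \ldots, l_{d-2}, m)$ and the right side enumerates the same $S$ in the order $(m, l_{d-2}, \ldots, l_2, l)$, and conclude by commutativity of finite sums.
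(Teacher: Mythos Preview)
Your proposal is correct and takes essentially the same approach as the paper: both identify the summation region as the set of tuples satisfying the chain $0 \le m \le l_{d-2} \le \cdots \le l_2 \le l \le p$, then observe that iterating this region with $m$ outermost forces the lower bounds $l_{j+1}$ (or $m$) and the common upper bound $p$ on each inner variable. Your write-up is somewhat more explicit in invoking commutativity of finite sums over a common index set $S$, but the argument is the same.
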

}
\begin{proof}
To reverse the multi-sum, the upper and lower bounds of the indices need to be determined.  Each index of the original multi-sum can have values between 0 and $p$ inclusively.  However, the upper bounds of the original multi-sum gives the following constraint:
\begin{align} \label{sumcon}
    0 \le m \le l_{d-2} \le ... \le l_2 \le l.
\end{align}
When the indices are reversed, this constraint determines the new lower bound of each of the indices.  Since there are no other constraints, the upper bound for each index will be $p$.
\end{proof}
{
\begin{lem} \label{lp1sumlem}
Let $p \in \mathbb{N}_0$. Then the multi-sum ${\sf Y}_2$
over quantum numbers for standard polyspherical harmonics defined by 
\begin{align*} 
    {\sf Y}_2 &:=\sum_{l=p+1}^{\infty} \sum_K  = \sum_{l=p+1}^{\infty} \sum_{l_2=0}^l \cdots \sum_{l_{d-2}=0}^{l_{d-3}} \sum_{m=0}^{l_2}.
\end{align*}
can then be re-expressed with the sum order reversed as
\begin{align}
    {\sf Y}_2  = \sum_{m=0}^p \sum_{l_{d-2}=m}^{\infty} \cdots \sum_{l_2=l_3}^{\infty} \sum_{l=\max(l_2,p+1)}^{\infty} 
     + \sum_{m=p+1}^{\infty} \sum_{l_{d-2}=m}^{\infty} \cdots \sum_{l_2=l_3}^{\infty} \sum_{l=l_2}^{\infty}.
\end{align}
\end{lem}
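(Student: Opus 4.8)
The plan is to mimic the proof of Lemma \ref{multisum} but carefully track the extra constraint coming from the outer index $l$ running over $\{p+1, p+2, \ldots\}$ rather than over $\{0,1,\ldots,p\}$. First I would write down the full set of inequality constraints implied by the original nested multi-sum ${\sf Y}_2$: the upper limits give the chain $0 \le m \le l_{d-2} \le \cdots \le l_2 \le l$, and the outer sum additionally imposes $l \ge p+1$. Unlike in Lemma \ref{multisum}, there is no upper cutoff on any index, so when the order of summation is reversed the only lower bounds are those dictated by the chain of inequalities and by $l \ge p+1$, and all upper bounds become $\infty$.

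Next I would perform the reversal index by index, from the innermost to the outermost in the new order (i.e., starting with $m$). The index $m$ is now outermost; it is constrained only by $m \le l_{d-2} \le \cdots \le l$ with $l \ge p+1$, so a priori $m$ ranges over all of $\N_0$. The key observation — and the crux of the two-piece split in the claimed identity — is that one must distinguish whether $m \le p$ or $m \ge p+1$. If $m \le p$, then for the chain $m \le l_{d-2} \le \cdots \le l_2 \le l$ to be compatible with $l \ge p+1$, the only binding constraint on the intermediate indices $l_{d-2}, \ldots, l_2$ is $l_j \ge l_{j+1}$ (they may each be taken anywhere from their predecessor up to $\infty$), while the outermost-in-chain index $l$ must satisfy both $l \ge l_2$ and $l \ge p+1$, hence $l \ge \max(l_2, p+1)$. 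If instead $m \ge p+1$, then automatically $l \ge l_2 \ge \cdots \ge l_{d-2} \ge m \ge p+1$, so the constraint $l \ge p+1$ is redundant and $l$ simply runs from $l_2$ to $\infty$. Splitting the $m$-sum at $p$ accordingly yields exactly the two multi-sums in the statement.

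I would then verify that the two descriptions enumerate the same set of tuples $(l, l_2, \ldots, l_{d-2}, m)$ by a direct set-theoretic argument: a tuple lies in the support of ${\sf Y}_2$ iff it satisfies $p+1 \le l$ and $0 \le m \le l_{d-2} \le \cdots \le l_2 \le l$, and one checks this bi-implication against membership in the union of the two reversed multi-sums, using the case distinction $m \le p$ versus $m \ge p+1$ above. Since the summand is independent of the order of summation and both enumerations cover precisely this (countable) index set without repetition, Tonelli/Fubini for nonnegative-indexed rearrangement (or simply termwise reordering in the formal-sum sense in which these identities are applied later) gives the equality.

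The main obstacle, and the only genuinely delicate point, is getting the lower limit of the innermost-in-the-new-order index $l$ exactly right in the first piece: it is tempting to write $l = l_2$ uniformly, but that would erroneously include tuples with $l \le p$, while writing $l = p+1$ uniformly would wrongly drop the requirement $l \ge l_2$ when $l_2 > p+1$; the correct bound $l = \max(l_2, p+1)$ is forced precisely on the range $0 \le m \le p$. One should also double-check the edge behavior when $d$ is small (so that there are few or no intermediate $l_j$ indices) and confirm the $\max$ does not collapse degenerately — but this is routine once the constraint analysis is in place.
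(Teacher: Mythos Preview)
Your proposal is correct and follows essentially the same approach as the paper: identify the constraint chain $0\le m\le l_{d-2}\le\cdots\le l_2\le l$ together with $l\ge p+1$, reverse the order of summation so that $l$ inherits the lower bound $\max(l_2,p+1)$, and then split the outermost $m$-sum at $p$ to drop the $\max$ in the second piece. Your write-up is in fact more careful than the paper's, explicitly noting the set-theoretic bijection between index sets and flagging the edge cases, but the underlying argument is identical.
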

}
\begin{proof}
First it is noted that each of the indices in the original multi-sum, except for $l$, will have a lower bound of 0 and an upper bound of infinity. The index $l$ will have the same upper bound, but a lower bound of $p+1$.  Like the previous lemma, the upper bounds of the original multi-sum will give us the constraint, \eqref{sumcon}, which determines the lower bounds of the reversed multi-sum. The combination of the following two lower bounds:
\begin{align*}
    l_2 \le l, \quad
    p+1 \le l,
\end{align*}
yields the lower bound $\max(p+1, l_2)$.  With this, the multi-sum can be reversed:
\begin{align*}
    {\sf Y_2} = \sum_{m=0}^\infty \sum_{l_{d-2}=m}^\infty \cdots \sum_{l_2=l_3}^\infty \sum_{l =\text{max}(l_2,p+1)}^\infty.
\end{align*}
In later theorems, the scenarios of $m \le p$ and $m \ge p+1$ need to be treated separately.  With the index $m$ being the first sum of the multi-sum, this split is uncomplicated.  When $m \ge p+1$, we can drop the maximum function on the lower bound of $l$.
\end{proof}

\subsection{Generalized Hopf coordinates}
\label{Ghopfsec}

{
Generalized Hopf
coordinates are a type of Vilenkin
polyspherical coordinates
on $\R^d$ with $d=2^q$ for $q\ge 1$.
They are Vilenkin polyspherical
orthogonal curvilinear 
coordinates with one radial coordinate $r\in[0,\infty)$, and $(d-1)$-angular coordinates which together parametrize points on $\Si_r^{d-1}$ the $(d-1)$-dimensional real hypersphere with radius $r$. 
Of the $(d-1)$-angular coordinates
$(d/2\!-\!1)$-$\vartheta$ coordinates take values in $[0,\frac{1}{2}\pi]$,
and the other $(d/2)$-$\phi$ coordinates are of azimuthal type
and take values in $[-\pi,\pi)$.
For a careful treatment of generalized Hopf coordinates, see \cite[Appendix B]{Cohl12pow}.
}

{
In this paper we depart slightly from
our previous description of generalized Hopf coordinates---we have adopted a {\it reversed} azimuthal identification
for the azimuthal angles and their corresponding quantum numbers.
In particular if one considers the collection
of angles in generalized Hopf coordinates given by some vector of angles ${\bf \boldsymbol\Theta}:=(\Theta_1,\ldots,\Theta_{d-1})$ with vector of corresponding quantum numbers ${\bf p}:=(p_1,\ldots,p_{d-1})$. In our previous paper, these were ordered as
\begin{eqnarray}
&&\boldsymbol\Theta=(\vartheta_1,\ldots,\vartheta_{d/2-1},\phi_1,\ldots,\phi_{d/2}), \quad
{\bf p}=(l_1,\ldots,l_{d/2-1},m_1,\ldots,m_{d/2}),
\end{eqnarray}
and in the current paper we order them as 
\begin{eqnarray}
&&\boldsymbol\Theta=(\vartheta_1,\ldots,\vartheta_{d/2-1},\phi_{d/2},\ldots,\phi_1), \quad
{\bf p}=(l_1,\ldots,l_{d/2-1},m_{d/2},\ldots,m_1)
\end{eqnarray}
(see Figure~\ref{Fig:genHopf} as compared to \cite[Figure 5]{Cohl12pow}).
}

{
These coordinates
generalize two-dimensional polar coordinates (see Figure \ref{Fig:genHopf}a)
\begin{gather}
x_1 = r\cos\phi, \qquad
x_2 = r\sin\phi,
\label{hypsph2}
\end{gather}
and four-dimensional Hopf coordinates
(see Figure \ref{Fig:genHopf}b)
\begin{alignat}{3}
& x_1 = r\cos\vartheta\cos\phi_2
,\qquad &&
x_2 = r\cos\vartheta\sin\phi_2, &\nonumber\\
& x_3 = r\sin\vartheta\cos\phi_1,
\qquad &&
x_4 = r\sin\vartheta\sin\phi_1.
& \label{typeca2}
\end{alignat}
See Figure \ref{Fig:genHopf}c for a Vilenkin tree of the $d=8$ generalized Hopf coordinates.
In general the transformation formulae to 
Cartesian coordinates for generalized Hopf coordinates is given by
\begin{gather}
x_{1} = r\cos\vartheta_1\cos\vartheta_2\cos\vartheta_4\cos\vartheta_8\cdots\cos\vartheta_{2^{q-2}}
\cos\phi_{2^{q-1}}
,\nonumber\\
x_{2} = r\cos\vartheta_1\cos\vartheta_2\cos\vartheta_4\cos\vartheta_8\cdots\cos\vartheta_{2^{q-2}}
\sin\phi_{2^{q-1}}
,\nonumber\\
\cdots\cdots\cdots\cdots\cdots\cdots\cdots\cdots\cdots\cdots\cdots\cdots\cdots\cdots\cdots\cdots  \nonumber\\
x_{2^{q-1}-1} = r\cos\vartheta_1\sin\vartheta_2\sin\vartheta_5\sin\vartheta_{11}\cdots
\sin\vartheta_{3\cdot 2^{q-3}-1}
\cos\phi_{2^{q-2}+1}
,\nonumber\\
x_{2^{q-1}} = r\cos\vartheta_1\sin\vartheta_2\sin\vartheta_5\sin\vartheta_{11}\cdots
\sin\vartheta_{3\cdot 2^{q-3}-1}
\sin\phi_{2^{q-2}+1}
,\nonumber\\
x_{2^{q-1}+1} = r\sin\vartheta_1\cos\vartheta_3\cos\vartheta_6\cos\vartheta_{12}\cdots
\cos\vartheta_{3\cdot 2^{q-3}}
\cos\phi_{2^{q-2}}
,\nonumber\\
x_{2^{q-1}+2} = r\sin\vartheta_1\cos\vartheta_3\cos\vartheta_6\cos\vartheta_{12}\cdots
\cos\vartheta_{3\cdot 2^{q-3}}
\sin\phi_{2^{q-2}}
,\nonumber\\
\cdots\cdots\cdots\cdots\cdots\cdots\cdots\cdots\cdots\cdots\cdots\cdots\cdots\cdots\cdots\cdots  \nonumber\\
x_{2^{q}-1} = r\sin\vartheta_1\sin\vartheta_3\sin\vartheta_7\sin\vartheta_{17}\cdots
\sin\vartheta_{2^{q-1}-1}
\cos\phi_1
,\nonumber\\
x_{2^{q}} = r\sin\vartheta_1\sin\vartheta_3\sin\vartheta_7\sin\vartheta_{17}\cdots
\sin\vartheta_{2^{q-1}-1}
\sin\phi_1
,
\label{genHopf}
\end{gather}
where $\vartheta_j\in\big[0,\frac12\pi\big]$ for
$1\le j \le 2^{q-1}\!-\!1$
and $\phi_k\in[-\pi,\pi)$ for
$1\le k\le 2^{q-1}$.
Generalized Hopf coordinates are unique in that they correspond
to the only trees which contain only themselves
in their equivalence class (see~\cite[(B.2)]{Cohl12pow}).
These coordinate systems have separated harmonic eigenfunctions which
are given in terms of complex
exponentials of the azimuthal angles, and for $q\ge 2$, non-symmetric Jacobi polynomials for 
the quantum numbers corresponding to the $\vartheta$-angles.
}

\begin{figure}[H]
\hspace{0.8cm}\includegraphics[scale=0.79,angle=90,origin=c,trim={0.3cm 0.0cm 0.0cm 1.9cm},clip]{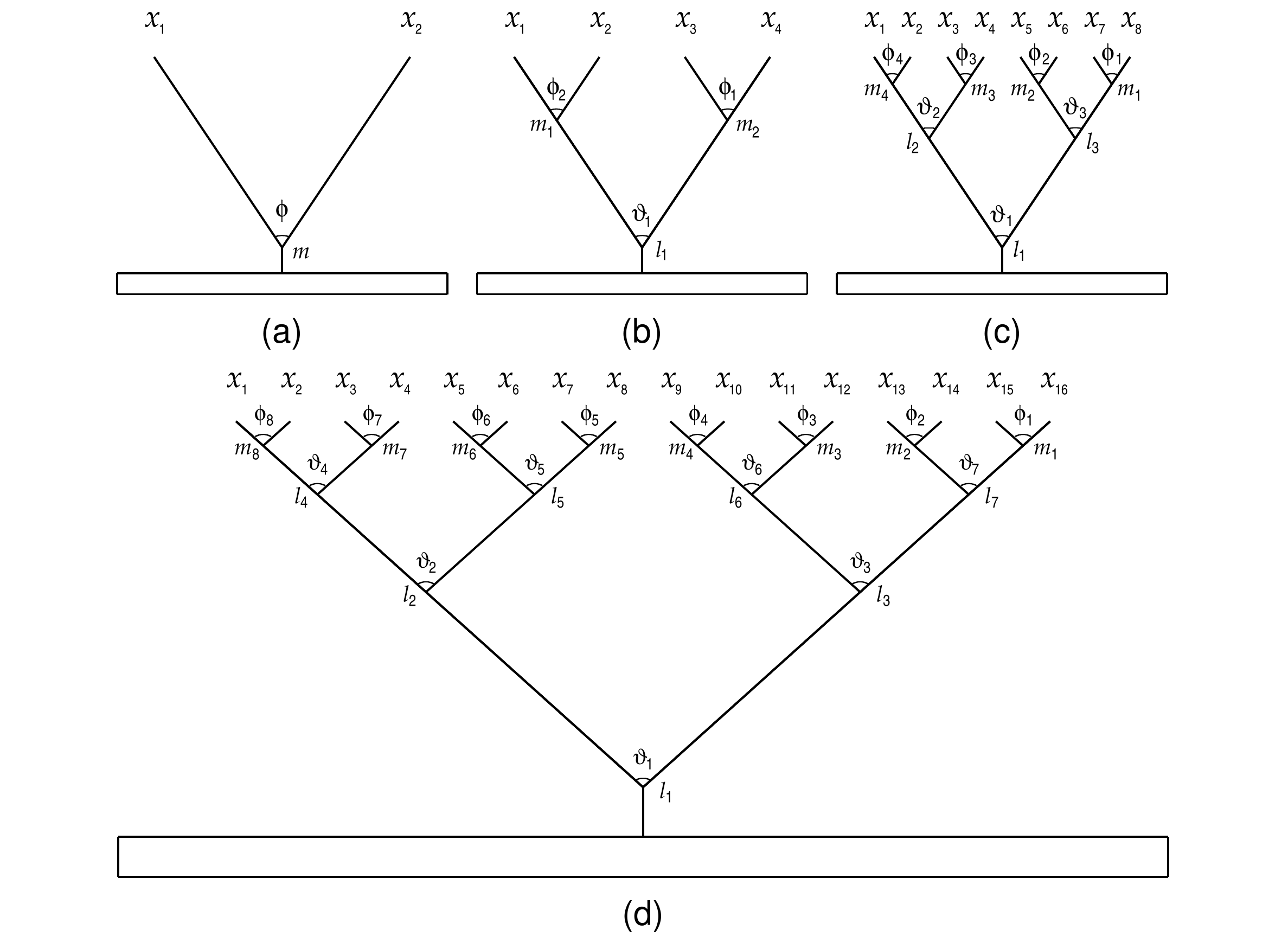}\\[-1.5cm]
\caption{{This figure is a tree diagram for polyspherical generalized Hopf
coordinates of 
on~$\R^{2^{q}}$ with $q=1,2,3,4$
for (a), (b), (c), (d) respectively.
The first $(2^{q-1}\!-\!1)$-branching nodes  correspond to the angles
$\vartheta_j\in\big[0,\frac12\pi\big]$
and quantum numbers~$l_j\in\N_0$, $1\le j\le d/2\!-\!1$.  The following $(2^{q-1})$-branching nodes 
correspond to the angles $\phi_k\in[-\pi,\pi)$ and quantum numbers $m_k\in\Z$, $1\le k\le d/2$.
These coordinates correspond to transformation~(\ref{genHopf}).}}
\label{Fig:genHopf}
\end{figure}


{
The cosine of the separation angle 
\eqref{sepang}
in these coordinates may be given as follows.
Define the symbol ${}_q{\sf G}_s^r\in[-1,1]$, where $0\le s\le q$
and $1\le r \le 2^{q}-1$,
by the recursive formula
\[
 {}_{q}{\sf G}_s^r=
\cos\vartheta_{r-1+2^{q-s}}
\cos\vartheta_{r-1+2^{q-s}}'
\ {}_{q}{\sf G}_{s-1}^{2r-1}
+
\sin\vartheta_{r-1+2^{q-s}}
\sin\vartheta_{r-1+2^{q-s}}'
\ {}_{q}{\sf G}_{s-1}^{2r},
\]
with ${}_{q}{\sf G}_0^i=1$. Then the cosine of the separation angle is
given by
\[
\cos\gamma={}_{q}{\sf G}_q^1.
\]
Note that through the identification
$\phi_k=\Theta_{d/2+1-k}$, where
$1\le k\le 2^{q-1}$,
then
${}_{q}{\sf G}_1^i=\cos(\phi_i-\phi_i')$. Thus, this shows one may stop this recursion at $s=1$.
}

{
In regard to the quantum numbers in 
generalized Hopf coordinates, denote the
meridional quantum numbers $l_k\in\N_0$
and the azimuthal quantum numbers $m_j\in\Z$, 
such that $1\le k\le d/2-1$, $1\le j\le d/2$. 
Define $\mcL:=(l_1,l_2,\ldots,l_{\fdt-1})$,  $\mcM:=(m_\fdt,\ldots,m_1)$ where $l:=l_1$ 
and $m:=m_1$.
In these coordinates, it is convenient to relate the meridional quantum numbers $l_k$ to corresponding
{\it surrogate}
quantum numbers $n_k$ using $l_k=2n_k+l_{2k}+l_{2k+1}$, and we define
$\mcN:=(n_1,n_2,\ldots,n_{\fdt-1})$, where $n:=n_1$, and $n_k\in\N_0$ for all $1\le k\le d/2-1$.
For convenience define $N:=\sum_{k=1}^{d/2-1}n_k$, 
$M:=\sum_{j=1}^{d/2} m_j$.
Note that one can always write
\begin{eqnarray}
&&l_1=2N+M,\nonumber\\[0.2cm]
&&l_2=
2\sum_{j=2}^{\log_2(d/2)}\sum_{k=0}^{2^{j-2}-1}n_{2^{j-1}+k}+
\sum_{j=\fdf+1}^\fdt m_j,\nonumber\\[0.2cm]
&&l_3=
2\sum_{j=2}^{\log_2(d/2)}\sum_{k=0}^{2^{j-2}-1}n_{2^{j-2}+2^{j-1}+k}
+
\sum_{j=1}^\fdf m_j.\nonumber
\end{eqnarray}
}

In generalized Hopf coordinates, the normalized hyperspherical harmonics are given by 
\cite[(B.21)]{Cohl12pow}
\[
Y_l^K(\bfx)=\frac{\prod_{1\le j\le d/2}\exp(im_j\phi_j)}{\sqrt{2}\,\pi^{d/4}}
\upSH{1}{\log_2d}{l_2,l_3}
\times\cdots\times
\upsH{\fdt-1}{\log_2d}{m_2,m},
\]
where $\Upsilon_{k}^{\log_2d}:\N_0^3\times[0,\frac12\pi]\to\R$ is defined by 
\begin{eqnarray} \label{UpsDef}
&&\upsH{k}{\log_2d}{l_{2k},\l_{2k+1}}:=\sqrt{\frac{(2n_k+\alpha+\beta+1)(n_k+\alpha+\beta)!n_k!}
{(n_k+\alpha)!(n_k+\beta)!}}
\nonumber\\[0.2cm]&&\hspace{5cm}\times
(\cos\vartheta_k)^{l_{2k}}
(\sin\vartheta_k)^{l_{2k+1}}
P_n^{(\beta,\alpha)}(\cos(2\vartheta_k)),
\end{eqnarray}
\begin{eqnarray}
&& \alpha=\alpha_k^d(l_{2k}):=l_{2k}-1+2^{\log_2(d/4)-\lfloor\log_2k\rfloor},\nonumber\\
&& \beta=\beta_k^d(l_{2k+1}):=l_{2k+1}-1+2^{\log_2(d/4)-\lfloor\log_2k\rfloor}.\nonumber
\end{eqnarray}
\begin{remark}
Note that if $d/4\le k\le d/2-1$ then $-1+2^{\log_2(d/4)-\lfloor\log_2k\rfloor}=0$, hence
$\alpha,\beta,l_{2k},l_{2k+1}\in\mcM$. Also if $1\le k\le d/4-1$, 
then $l_{2k},l_{2k+1}\in\mcL$.
\label{upperMremark}
\end{remark}

The addition theorem 
for hyperspherical harmonics \eqref{addthmhypsph} involves the product 
$Y_l^K(\bfx)\overline{Y_l^K(\bfxp)}$, so we introduce a convenient notation, namely 
$\Psi_{k}^{\log_2d}:\N_0^3\times[0,\frac12\pi]^2\to\R$ is defined by 
\begin{equation} \label{psi2ups}
\PsiH{k}{\log_2d}{l_{2k},\l_{2k+1}}:=
\upsH{k}{\log_2d}{l_{2k},\l_{2k+1}}
\upsHp{k}{\log_2d}{l_{2k},\l_{2k+1}}.    
\end{equation}

\begin{thm} \label{hopfaddthm}
In generalized Hopf coordinates, the product of normalized harmonics which appears in 
the addition theorem for hyperspherical harmonics \eqref{addthmhypsph} is given by 
\begin{eqnarray}
&& Y_l^K(\bfx)\overline{Y_l^K(\bfxp)}=\frac{1}{2\pi^{\frac{d}{2}}}
\epsilon_m\cos(m(\phi-\phi'))
\epsilon_{m_2}\cos(m_2(\phi_{m_2}-\phi_{m_2}'))\cdots
\epsilon_{m_\fdt}\cos(m_\fdt(\phi_\fdt-\phi_\fdt'))\nonumber\\
&&
\hspace{3cm}\times\PsiH{\fdt-1}{\log_2d}{m_2,m}
\times\cdots\times
\PsIH{1}{\log_2d}{l_2,l_3}.
\end{eqnarray}
\end{thm}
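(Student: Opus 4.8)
The plan is to substitute the explicit product representation of the normalized hyperspherical harmonics in generalized Hopf coordinates, \cite[(B.21)]{Cohl12pow} (recalled above), into $Y_l^K(\bfx)\overline{Y_l^K(\bfxp)}$ and then reduce it one factor at a time. First, the two scalar prefactors $1/(\sqrt{2}\,\pi^{d/4})$ multiply to give the overall constant $1/(2\pi^{d/2})$. Second, each matching pair of azimuthal exponentials combines as $\expe^{im_j\phi_j}\,\overline{\expe^{im_j\phi_j'}}=\expe^{im_j(\phi_j-\phi_j')}$ for $1\le j\le d/2$. Third, each $\Upsilon_k^{\log_2d}$ in \eqref{UpsDef} is real-valued---it is a Jacobi polynomial with real argument times a positive ratio of factorials/gamma functions times powers of $\cos\vartheta_k$ and $\sin\vartheta_k$---so complex conjugation fixes it, and by the definition \eqref{psi2ups} the product $\Upsilon_k^{\log_2d}(\,\cdot\,;\vartheta_k)\,\Upsilon_k^{\log_2d}(\,\cdot\,;\vartheta_k')$ equals $\Psi_k^{\log_2d}$ with the same quantum-number arguments. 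Assembling these three reductions yields
\[
Y_l^K(\bfx)\overline{Y_l^K(\bfxp)}
=\frac{1}{2\pi^{\frac{d}{2}}}\left(\prod_{j=1}^{d/2}\expe^{im_j(\phi_j-\phi_j')}\right)
\PsiH{\fdt-1}{\log_2d}{m_2,m}\times\cdots\times\PsIH{1}{\log_2d}{l_2,l_3}.
\]

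It then remains to convert each exponential $\expe^{im_j(\phi_j-\phi_j')}$ into the Neumann-weighted cosine $\epsilon_{m_j}\cos(m_j(\phi_j-\phi_j'))$, and this is the step I expect to be the main obstacle. The mechanism is a sign-folding inside the sum over $K$ in \eqref{addthmhypsph}: that sum runs over all admissible quantum numbers, the azimuthal quantum numbers $m_1,\dots,m_{d/2}$ each range over $\Z$ symmetrically about $0$, and---this is the crucial point---every factor $\Psi_k^{\log_2d}$ depends on a given azimuthal quantum number only through its absolute value. Indeed, by Remark~\ref{upperMremark} and the reversed azimuthal identification $\phi_k=\Theta_{d/2+1-k}$ adopted in this paper, each $l_{2k},l_{2k+1}$ feeding $\Psi_k^{\log_2d}$ is either a meridional index in $\mcL$ or, for $d/4\le k\le d/2-1$, lies in $\mcM$ and enters only as a Jacobi parameter or as an exponent of $\cos\vartheta_k$ or $\sin\vartheta_k$, never with a sign. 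Hence, for fixed magnitudes of all quantum numbers, pairing the contributions of $+m_j$ and $-m_j$ replaces $\expe^{im_j(\phi_j-\phi_j')}$ (summed over the two signs) by $2\cos(m_j(\phi_j-\phi_j'))$ when $m_j\neq0$ and by $1$ when $m_j=0$, which is exactly $\epsilon_{m_j}\cos(m_j(\phi_j-\phi_j'))$ with the $m_j$-sums now restricted to $\N_0$. Carrying this out simultaneously for the distinguished angle $j=1$ (where $m_1=m$ and $\phi_1=\phi$) and for $j=2,\dots,d/2$ produces the stated identity.

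The only genuinely delicate bookkeeping is tracking which indices $l_{2k},l_{2k+1}$ are meridional and which are azimuthal, since only the latter participate in the $\pm$ folding; this is precisely the content of Remark~\ref{upperMremark}, and the cleanest way to organize the argument is to split the range $1\le k\le d/2-1$ at $k=d/4$, treating $1\le k\le d/4-1$ (where $l_{2k},l_{2k+1}\in\mcL$ and no sign is present) separately from $d/4\le k\le d/2-1$. Everything else is direct substitution and relabeling via \eqref{UpsDef} and \eqref{psi2ups}.
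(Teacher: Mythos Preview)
Your approach is essentially the same as the paper's: compute the product $Y_l^K(\bfx)\overline{Y_l^K(\bfxp)}$ directly, observe that the $\Upsilon$ factors are real so they combine into $\Psi$'s, and then fold each $m_j$-sum over $\Z$ into a sum over $\N_0$ using the evenness of the coefficients in $m_j$. Both you and the paper recognize that the cosine form is only valid inside the sum over $K$ in \eqref{addthmhypsph}, not termwise.

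The one place where your argument is thinner than the paper's is the justification of the crucial invariance $\Psi_k^{\log_2 d}\big|_{m_j\to -m_j}=\Psi_k^{\log_2 d}$. You write that an azimuthal quantum number ``enters only as a Jacobi parameter or as an exponent of $\cos\vartheta_k$ or $\sin\vartheta_k$, never with a sign,'' and conclude dependence only through $|m_j|$. But that inference is not immediate: in \eqref{UpsDef} the index $l_{2k}$ (or $l_{2k+1}$) appears \emph{as a value}, not as an absolute value, in the power $(\cos\vartheta_k)^{l_{2k}}$, in the Jacobi parameters of $P_{n_k}^{(\beta,\alpha)}$, and in the factorial prefactor; negating it changes every one of these pieces individually. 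What makes the \emph{combination} invariant is precisely the Jacobi parameter symmetries \eqref{Jacobinega}, \eqref{Jacobinegb}: applying, say, \eqref{Jacobinega} to pass from $P_{n_k}^{(\beta,\alpha)}$ to $P_{n_k+\alpha}^{(\beta,-\alpha)}$ produces exactly the factor $\left(\frac{z-1}{2}\right)^{-\alpha}$ and the factorial ratio needed to cancel the changes in the trigonometric power and in the normalization constant (with $z=\cos(2\vartheta_k)$, so $\tfrac{z-1}{2}=-\sin^2\vartheta_k$, etc.). The paper's proof names these relations explicitly for $d/4\le k\le d/2-1$; you should too, since this is the substantive computation underlying the sign-folding, and without it the claim that $\Psi_k$ sees only $|m_j|$ is an assertion rather than a verification.
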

\begin{proof}
In the sequel, we sum over all quantum numbers. 
If the coefficients of a product of Fourier series over $m_j\in\mcM$, whose coefficients are $f_{m_j}$, are invariant 
under sign reversal transformation, namely $f_{-m_j}=f_{m_j}$,
we can rewrite the product of complex exponentials as a product of trigonometric cosine 
functions. This is accomplished using
\[
\sum_{m_j\in\Z}f_j\exp(im_j\psi_j)=
\sum_{m_j\in\N_0}\epsilon_m f_j \cos(m_j\psi_j),
\]
where $\epsilon_m=2-\delta_{m,0}$.
The eigenfunctions $\Psi_k^{\log_2d}$ are invariant under this transformation, 
which can be verified by applying
\eqref{Jacobinega}, \eqref{Jacobinegb} for $d/4\le k\le d/2-1$ (see Remark \ref{upperMremark}).
\end{proof}

\subsubsection{Generalized Hopf multi-sum reversal lemmas}

{In generalized Hopf coordinates (as well as in the large-part of Vilenkin polyspherical coordinate systems), the 
procedure for summing over the degenerate quantum 
numbers can become somewhat technical. In the 
specialized case of generalized Hopf coordinates
we outline the procedure for describing the multi-sums
over the degenerate quantum numbers and derive 
descriptions of those same multi-sums, but instead
with the sum orders reversed.}

\begin{lem} \label{sumreversal}
Let $p\in\N_0$, 
$1 \le j \le d/2$,
$1 \le k \le d/2-1$.
Consider the multi-sum ${\sf Z}_1$ defined by
\begin{eqnarray}
&&{\sf Z}_1:=\sum_{l=0}^p 
\sum_{l_2=0}^\infty
\cdots
\sum_{l_{2^{q-1}-1}=0}^\infty
\ 
\sum_{m_{2^{q-1}}=0}^\infty
\cdots
\sum_{m_2=0}^\infty
\ 
\sum_{m=0}^p
\end{eqnarray}
over $\mcL\cup\mcM$ with 
$0\le m_j\le p$,
$0\le l_k\le p$, 
where the sum over the $l_k\in\mcL$ quantum numbers are restricted such that 
\[
n_k=\frac{l_k-l_{2k}-l_{2k+1}}{2}\in\N_0.
\]
Then the multi-sum ${\sf Z}_1$ can be re-expressed as a multi-sum over $\mcN\cup\mcM,$ with sum order reversed to obtain
\begin{eqnarray}
&&\hspace{0cm}{\sf Z}_1=
\sum_{m=0}^p
\cdots
\sum_{m_k=0}^{p-{\scriptstyle \sum_{i=1}^{k-1}m_i}}
\cdots
\sum_{m_{2^{q-1}}=0}^{p-{\scriptstyle \sum_{i=1}^{2^{q-1}-1}m_i}}
\ 
\sum_{n_{2^{q-1}-1}=0}^{\left\lfloor \frac{p-{\scriptstyle \sum_{i=1}^{2^{q-1}}m_i}}{2}\right\rfloor}
\cdots
\sum_{n=0}^{\left\lfloor \frac{p-{\scriptstyle \sum_{i=1}^{2^{q-1}}m_i}-2\sum_{i=2}^{2^{q-1}-1}n_i}{2}\right\rfloor}.
\end{eqnarray}
\end{lem}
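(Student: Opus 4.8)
The plan is to reverse the multi-sum ${\sf Z}_1$ by working through two independent transformations: first relaxing the interdependent bounds $0\le m\le p$, $0\le l_k\le p$ (with the divisibility constraints coupling the $l_k$) into a single ordered chain of inequalities, and second re-solving that chain from the innermost index outward. The key observation is that the original bounds ``$0 \le l_k \le p$, $0\le m_j\le p$ subject to $n_k\in\N_0$'' are not independent once we pass to the surrogate variables via $l_k = 2n_k + l_{2k} + l_{2k+1}$: expanding the recursion $l_1 = 2N + M$ (and the analogous formulas for $l_2$, $l_3$ displayed earlier in the excerpt), the single surviving constraint is that the \emph{full} expansion of $l = l_1$ in terms of the $m_j$ and $n_k$ be at most $p$, i.e.
\[
\sum_{i=1}^{2^{q-1}} m_i + 2\sum_{i=1}^{2^{q-1}-1} n_i \le p.
\]
All the other inequalities $l_k \le p$ are then automatically implied, since each $l_k$ for $k\ge 2$ is a partial (nonnegative) sub-sum of this same total.

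First I would make the substitution $l_k = 2n_k + l_{2k} + l_{2k+1}$ in ${\sf Z}_1$, converting the sum over $\mcL\cup\mcM$ (with divisibility side-conditions) into a sum over $\mcN\cup\mcM$ with no side-conditions but with the single global budget constraint above. This step uses nothing more than the change-of-variables bijection between admissible $(l_1,\dots,l_{d/2-1})$ and $(n_1,\dots,n_{d/2-1})$ for fixed $\mcM$, which is exactly the definition of the surrogate quantum numbers recalled in the text. Next I would put the azimuthal index $m=m_1$ as the outermost sum; since all the $m_j$ and $n_i$ are nonnegative and their weighted total is $\le p$, we immediately get $0\le m\le p$. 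Peeling off one index at a time from the outside, each subsequent $m_k$ ranges over $0 \le m_k \le p - \sum_{i=1}^{k-1} m_i$ (the remaining budget), and then each $n_k$, carrying weight $2$, ranges over $0\le n_k \le \big\lfloor (p - \sum_i m_i - 2\sum_{i>k} n_i)/2\big\rfloor$. Ordering the $m_k$ before the $n_k$ and running $n$ innermost reproduces precisely the claimed reversed multi-sum; the floor functions appear exactly because $n_k$ contributes $2n_k$ to the budget while the budget itself is an integer.

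I expect the main obstacle to be bookkeeping rather than anything conceptual: one must be careful that the ordering of the $m_k$ and $n_k$ in the reversed sum matches the nesting structure of the generalized Hopf Vilenkin tree (so that, e.g., the sub-sum expansions of $l_2$ and $l_3$ are consistent with which $n_i$ and $m_j$ have already been fixed when a given index is summed), and that the ``$\cdots$'' in the displayed formula genuinely interpolates the pattern for all $1\le k\le d/2-1$ and $1\le j\le d/2$. A clean way to handle this is to verify the reversal first for the base cases $q=1$ (trivial, just $m=l$ from $0$ to $p$) and $q=2$ (four-dimensional Hopf, a single $\vartheta$ and two $\phi$'s), where the constraint reads $m_1 + m_2 + 2n_1 \le p$ and the reversed order is $\sum_{m_1=0}^p \sum_{m_2=0}^{p-m_1}\sum_{n_1=0}^{\lfloor(p-m_1-m_2)/2\rfloor}$, and then observe that the general recursion only adds further $m_j$ and $n_k$ to the same linear budget, so the inductive step is just ``one more nonnegative variable with weight $1$ or $2$.'' No deep input is needed beyond the surrogate-variable identities and the elementary fact that a chain of inequalities $0\le a_1$, $0\le a_2$, $\dots$, $\sum w_i a_i \le p$ with $w_i\in\{1,2\}$ is reversed by successively subtracting off the already-chosen terms from the budget.
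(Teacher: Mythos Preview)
Your proposal is correct and follows essentially the same approach as the paper: both pass to the surrogate variables $n_k$ via $l_k = 2n_k + l_{2k} + l_{2k+1}$, observe that the constraint region collapses to the single linear ``budget'' $2\sum_i n_i + \sum_j m_j \le p$ (with all other bounds $l_k\le p$ redundant), and then read off the reversed bounds by peeling variables off this budget. The paper phrases this geometrically---an invertible linear map $\varphi$ carrying the lattice-polytope $\Lambda = L\cap S$ to $\Gamma = \Z^{d-1}\cap T$ where $T$ is the simplex cut out by that same inequality---while you phrase it combinatorially, but the content is identical; your base-case/induction scaffolding is unnecessary since the simplex description already handles all $q\ge 2$ at once.
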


{
\begin{proof}
Let $q$ denote an integer at least 2.
Put $d = 2^q$ and suppose also that $p$ is a positive integer.
Let $\Lambda\subseteq\real^{d-1}$
consisting of $(d-1)$-tuples $\bar l = (l_1, \ldots, l_{d-1})$
satisfying the following conditions: (i) For $k$ satisfying $1\le k\le d-1$,
$0\le l_k \le p$;
(ii) For $k$ satisfying $1\le k \le \frac d2 -1$, $l_k - l_{2k} - l_{2k+1}$
is a nonnegative, even integer.
Let $S\subseteq \real^{d-1}$ denote the convex polytope defined by the
following inequalities:
\begin{eqnarray*}
&&l_1 \le p,\\
&&l_k\ge l_{2k}+l_{2k+1} \qquad (1\le k < \tfrac d2),\\
&&l_k\ge 0 \qquad (\tfrac d2 \le k < d).
\end{eqnarray*}
If $L$ denotes the lattice $L \subseteq \intg^{d-1}$
consisting of $(d-1)$-tuples of integers for which $l_k - l_{2k} - l_{2k+1}$ is even for $1\le k < \frac d2$, then
$\Lambda = L \cap S$.
(The polytope $S$ is a $(d-1)$-simplex, as shown below.)
For $1\le k < \frac d2$, put $n_k = \frac12 (l_k-l_{2k}-l_{2k+1})$;
for $\frac d2 \le k < d$, put $n_k = l_k$.
The function $\varphi:\real^d\rightarrow\real^d$ taking $\bar l$ to $\bar n = (n_1,\ldots, n_{d-1})$ is
an invertible linear transformation.
In particular, $l_1$ is given
in terms of the $n_k$'s by
\begin{eqnarray*}
&&l_1 = 2(n_1+\cdots + n_{\frac d2 - 1}) + n_{\frac d2} + \cdots + n_{d-1}.
\end{eqnarray*}
The transformation takes $S$ to the polytope $T = \varphi(S)$
defined by
\begin{eqnarray*}
&&n_k \ge 0  \qquad(1 \le k < d),\\
&&2(n_1+\cdots + n_{\frac d2 - 1}) + n_{\frac d2} + \cdots + n_{d-1}\le p.
\end{eqnarray*}
The polytope $T$ is a simplex, being the subset of the first orthant
in $\real^{d-1}$ that is cut off by a hyperplane.  Therefore the linearly equivalent set $S$ is
also a simplex.
The function $\varphi$ takes $L$ to $\intg^{d-1}$ and consequently it takes $\Lambda := L\cap S$
to $\Gamma := \intg^{d-1} \cap T$.
For a set $\{x_{\bar n}\}$ indexed by $\Gamma$, one has
 $\sum_{\bar n\in \Gamma} x_{\bar n}
= \sum_{\bar l\in \Lambda} x_{\varphi(\bar l)}$.
This completes the proof.
\end{proof}
}

\begin{lem} \label{sumHopfinfty}
Let $p\in\N_0$.
$1 \le j \le \fdt$,
$1 \le k \le \fdt-1$.
Consider the multi-sum ${\sf Z}_2$ defined by
\begin{eqnarray}
&&{\sf Z}_2:=\sum_{l=p+1}^\infty 
\sum_{l_2=0}^\infty 
\cdots
\sum_{l_{2^{q-1}-1}=0}^\infty
\ 
\sum_{m_{2^{q-1}}=0}^\infty
\cdots
\sum_{m_2=0}^\infty
\ 
\sum_{m=0}^p
\end{eqnarray}
over $\mcL\cup\mcM$ with 
$0\le m_j\le p$,
$l_k\ge p+1$,
with the same restriction over the $l_k\in\mcL$ in the above lemma.
Then ${\sf Z}_2$ can be re-expressed as a multi-sum over $\mcN\cup\mcM$ with sum order reversed to obtain
\begin{eqnarray}
&&\hspace{0cm}{\sf Z}_2=\sum_{m=0}^p
\sum_{m_2=0}^\infty
\cdots
\sum_{m_{2^{q-1}}=0}^\infty
\ 
\sum_{n_{2^{q-1}-1}=0}^\infty
\cdots
\sum_{n_2=0}^\infty
\ 
\sum_{n=\max\left(0,\left\lfloor \frac{p-{\scriptstyle \sum_{i=1}^{2^{q-1}}m_i}-2\sum_{i=2}^{2^{q-1}-1}n_i}{2}\right\rfloor+1\right)}^\infty.
\end{eqnarray}
\end{lem}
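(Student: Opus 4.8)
The plan is to re-run the polytope-and-lattice argument used for Lemma~\ref{sumreversal}, changed only in the single inequality that distinguishes ${\sf Z}_2$ from ${\sf Z}_1$. Put $d=2^q$ with $q\ge 2$ (for $q=1$ there are no meridional quantum numbers and nothing to prove), and label the degenerate quantum numbers $\mcL\cup\mcM$ by a $(d-1)$-tuple $\bar l=(l_1,\dots,l_{d-1})$ whose first $\fdt-1$ entries are the meridional quantum numbers and whose last $\fdt$ entries are the (here nonnegative) azimuthal quantum numbers, the reversed azimuthal identification of this paper placing $m=m_1$ in the last slot $l_{d-1}$. Let $L\subseteq\intg^{d-1}$ be the sublattice on which $l_k-l_{2k}-l_{2k+1}$ is even for $1\le k<\fdt$, and let $\varphi\colon\bar l\mapsto\bar n$ be the invertible linear change of variables of Lemma~\ref{sumreversal}, with $n_k=\tfrac12(l_k-l_{2k}-l_{2k+1})$ for $1\le k<\fdt$ and $n_k=l_k$ otherwise. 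Then $\varphi$ carries $L$ bijectively onto $\intg^{d-1}$ and satisfies $l_1=2(n_1+\dots+n_{\fdt-1})+(n_{\fdt}+\dots+n_{d-1})=2N+M$ in the notation of the text.

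Next I would exhibit the index set of ${\sf Z}_2$ as $\Lambda'=L\cap S'$, where $S'\subseteq\real^{d-1}$ is the polyhedron cut out by $l_1\ge p+1$, $l_{d-1}=m\le p$, $l_k\ge l_{2k}+l_{2k+1}$ for $1\le k<\fdt$, and $l_k\ge 0$ for $\fdt\le k<d$. Applying $\varphi$ gives $\varphi(\Lambda')=\intg^{d-1}\cap T'$, where $T'=\varphi(S')$ is the set of $\bar n$ with all $n_k\ge0$, with $n_{d-1}\le p$, and with $2N+M\ge p+1$; since $\varphi$ is a bijection of $\Lambda'$ onto $\varphi(\Lambda')$ one has $\sum_{\bar l\in\Lambda'}x_{\bar l}=\sum_{\bar n\in\varphi(\Lambda')}x_{\varphi^{-1}(\bar n)}$, so it remains only to enumerate $\intg^{d-1}\cap T'$ in the order $m_1,m_2,\dots,m_{\fdt},n_{\fdt-1},\dots,n_2,n_1$. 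The bound $n_{d-1}=m\le p$ fixes the outermost range $0\le m\le p$. Unlike in the reversal of ${\sf Z}_1$, none of $m_2,\dots,m_{\fdt},n_2,\dots,n_{\fdt-1}$ acquires an upper bound: the only remaining nontrivial inequality, $2N+M\ge p+1$, is a lower bound on the total and hence on each variable individually, so each of these runs over $\N_0$. Finally, with all other variables fixed, $2N+M\ge p+1$ becomes $2n_1\ge p+1-\sum_{i=1}^{\fdt}m_i-2\sum_{i=2}^{\fdt-1}n_i$, that is, $n_1\ge\bigl\lceil\tfrac12\bigl(p+1-\sum_{i=1}^{\fdt}m_i-2\sum_{i=2}^{\fdt-1}n_i\bigr)\bigr\rceil$; intersecting with $n_1\ge0$ and using $\lceil\tfrac12(t+1)\rceil=\lfloor\tfrac t2\rfloor+1$ for $t\in\intg$ yields exactly the stated lower limit $\max\bigl(0,\lfloor\tfrac12(p-\sum_{i=1}^{\fdt}m_i-2\sum_{i=2}^{\fdt-1}n_i)\rfloor+1\bigr)$.

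I expect no mathematical difficulty beyond the previous lemma; the friction will be organisational, namely keeping the two coordinate conventions ($\bar l$ versus $\bar n$, plus the reversed azimuthal labelling) straight, confirming that $m=m_1$ is precisely the coordinate carrying the upper bound $p$ (this $m\le p$ being the case that, as in the proof of Lemma~\ref{lp1sumlem}, is treated separately from $m\ge p+1$), and handling $\max(0,\cdot)$ correctly in the regime where $\lfloor\tfrac12(p-\sum m_i-2\sum n_i)\rfloor+1\le 0$ --- precisely the regime in which every admissible $\bar n$ with those $m_i,n_i$ already satisfies $2N+M\ge p+1$ at $n_1=0$, so the lower limit collapses to $0$. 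In contrast to Lemma~\ref{sumreversal}, the polyhedron $T'$ is unbounded, so I would not invoke the simplex language at all; I would only note that in the intended application ${\sf Z}_2$ is a tail of the absolutely convergent Gegenbauer expansion of Lemma~\ref{biloggeg} (and its corollaries), so the rearrangement of summation order is legitimate.
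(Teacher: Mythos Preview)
Your proposal is correct and follows essentially the same polytope-and-lattice argument as the paper: both define a polyhedron $S'$ in the $\bar l$ variables, push it through the same linear map $\varphi$ to obtain $T'$ in the $\bar n$ variables, and read off the reversed sum bounds. You are in fact more careful than the paper in two places: you explicitly derive the $\lfloor\cdot\rfloor+1$ lower limit on $n_1$ via the ceiling identity (the paper's proof stops at the inequality $2N+M\ge p+1$ without translating it into the stated floor expression), and you correctly impose the upper bound $p$ only on $m=m_1=l_{d-1}$, matching the conclusion of the lemma, whereas the paper's proof sketch writes $p\ge l_k\ge 0$ for all $\tfrac d2\le k<d$, which would bound every $m_j$ and is inconsistent with the unbounded ranges $\sum_{m_2=0}^\infty\cdots$ in the stated result.
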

{
\begin{proof}
Let $S^\prime\subseteq \real^{d-1}$ denote the convex polyhedron defined by the
following inequalities.
\begin{eqnarray*}
&&l_1\ge p+1,\\
&&l_k\ge l_{2k}+l_{2k+1} \qquad (1\le k < \tfrac d2),\\
&&p\ge l_k\ge 0 \qquad(\tfrac d2 \le k < d).
\end{eqnarray*}
The polyhedron $T^\prime = \varphi(S^\prime)$ is given by the following inequalities.
\begin{eqnarray*}
&&n_k \ge 0 \qquad (1\le k < d),\\
&&n_k \le p \qquad (\frac d2 \le k < d),\\
&&2(n_1+\cdots + n_{\frac d2 - 1}) + n_{\frac d2} + \cdots + n_{d-1}
\ge p+1.
\end{eqnarray*}
The set $\Lambda^\prime := S^\prime \cap L$ is mapped to $\Gamma^\prime := T^\prime \cap \intg^{d-1}$ by $\varphi$.
For a set $\{x_{\bar n}\}$ indexed by $\Gamma^\prime$, one has
 $\sum_{\bar n\in \Gamma^\prime} x_{\bar n}
= \sum_{\bar l\in \Lambda^\prime} x_{\varphi(\bar l)}$.
This completes the proof.
\end{proof}
}

\begin{lem} \label{multisumZ3}
Let $m,p,k\in\N_0$ such that
$p=k-d/2\in\N_0$, 
$d\in 2\N$ and
$1 \le k \le \fdt-1$.
Consider the following multi-sum over $\mcL\cup\mcM$ with
$m\ge p+1$.
\begin{eqnarray}
&&{\sf Z}_3:=
\sum_{l=p+1}^\infty 
\sum_{l_2=0}^\infty
\cdots
\sum_{l_{\fdt-1}=0}^\infty
\sum_{m_\fdt=0}^\infty
\cdots
\sum_{m_2=0}^\infty
\sum_{m=p+1}^\infty,
\end{eqnarray}
with the same restriction over the $l_k\in\mcL$ in the above lemmas.
Then the  multi-sum ${\sf Z}_3$ can be re-expressed as a multi-sum over $\mcN\cup\mcM$ with sum order reversed to obtain
\begin{eqnarray}
&&{\sf Z}_3
%
=
\sum_{m=p+1}^\infty
\sum_{m_2=0}^\infty
\cdots
\sum_{m_\fdt=0}^\infty
\sum_{n_{\fdt-1}=0}^\infty
\cdots
\sum_{n=0}^\infty.
\end{eqnarray}
\end{lem}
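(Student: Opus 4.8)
The plan is to mirror the convex-polytope argument already used for Lemmas~\ref{sumreversal} and~\ref{sumHopfinfty}, adjusting only the defining inequalities to reflect the extra restriction $m=m_1\ge p+1$. Following the convention of those proofs, take $q\ge 2$ and $d=2^q$ (the case $q=1$ is vacuous, since then ${\sf Z}_3=\sum_{m=p+1}^\infty$). Collect the $d-1$ quantum numbers of $\mcL\cup\mcM$ into a single tuple $\bar l=(l_1,\dots,l_{d-1})$ with $l_1,\dots,l_{d/2-1}$ meridional and $l_{d/2},\dots,l_{d-1}$ azimuthal, and fix the leaf index $\ell^\ast$, $d/2\le\ell^\ast\le d-1$, carrying the azimuthal quantum number $m=m_1$. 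Then ${\sf Z}_3$ is the sum of the summand over the points of the sublattice $L\subseteq\intg^{d-1}$ on which $l_k-l_{2k}-l_{2k+1}$ is even for $1\le k<d/2$, intersected with the polyhedron $S''\subseteq\real^{d-1}$ cut out by
\[
l_1\ge p+1,\qquad l_{\ell^\ast}\ge p+1,\qquad l_k\ge l_{2k}+l_{2k+1}\ \ (1\le k<\tfrac d2),\qquad l_k\ge 0\ \ (\tfrac d2\le k<d).
\]

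Next I would apply the same invertible linear change of variables $\varphi:\bar l\mapsto\bar n$ from the earlier proofs, $n_k=\tfrac12(l_k-l_{2k}-l_{2k+1})$ for $1\le k<d/2$ and $n_k=l_k$ for $d/2\le k<d$, which maps $L$ bijectively onto $\intg^{d-1}$, hence $\Lambda'':=S''\cap L$ bijectively onto $\Gamma'':=\varphi(S'')\cap\intg^{d-1}$, so that $\sum_{\bar l\in\Lambda''}x_{\varphi(\bar l)}=\sum_{\bar n\in\Gamma''}x_{\bar n}$. Under $\varphi$ one has $l_1=2(n_1+\cdots+n_{d/2-1})+n_{d/2}+\cdots+n_{d-1}$ and $l_{\ell^\ast}=n_{\ell^\ast}$, so $T'':=\varphi(S'')$ is described by $n_k\ge 0$ for $k\ne\ell^\ast$, $n_{\ell^\ast}\ge p+1$, together with $2(n_1+\cdots+n_{d/2-1})+n_{d/2}+\cdots+n_{d-1}\ge p+1$. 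The step I would single out as the crux is the observation that this last inequality is automatically satisfied: since every $n_k\ge 0$ and $n_{\ell^\ast}\ge p+1$, the left-hand side already exceeds $p$ — concretely this is just the recorded identity $l_1=2N+M$ together with $M\ge m_1\ge p+1$. Consequently $T''$ is merely a translate of the positive orthant, i.e.\ a Cartesian product of half-lines, and $\Gamma''$ factors as a product of independent one-dimensional index sets: $n_{\ell^\ast}=m$ ranges over $\{p+1,p+2,\dots\}$ and every other $n_k$ ranges over $\N_0$, with no interlocking upper bounds. Transcribing these ranges back in the meridional/azimuthal order gives exactly the asserted reversed form of ${\sf Z}_3$.

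The only feature distinguishing this lemma from Lemmas~\ref{sumreversal} and~\ref{sumHopfinfty}, and the point needing the most care, is precisely that the constraint $l=l_1\ge p+1$ coming from the outer sum $\sum_{l=p+1}^\infty$ is now redundant, so the reversed innermost sum over $n=n_1$ starts at $0$ rather than picking up the floor-function lower bound that appeared in Lemma~\ref{sumHopfinfty}; the same remark shows $T''$ is unbounded in each of its remaining coordinate directions, so none of the other reversed sums acquire an upper bound either. Everything else is the routine bookkeeping translation between the $\bar l$-indexed and $\bar n$-indexed descriptions already carried out in the earlier proofs.
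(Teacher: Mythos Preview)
Your proposal is correct and follows essentially the same convex-polyhedron/change-of-variables argument as the paper, applying $\varphi$ to the lattice points in the region $S''$ and reading off the reversed ranges from $T''=\varphi(S'')$. You are in fact more careful than the paper's own proof: the paper silently drops the constraint $l_1\ge p+1$ from its definition of $S''$, whereas you retain it and then explicitly discharge it via $l_1=2N+M\ge m\ge p+1$, which is the right justification.
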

{
\begin{proof}
Let $S^{\prime\prime}\subseteq \real^{d-1}$ denote the convex polyhedron defined by the
following inequalities:
\begin{eqnarray*}
&&l_k\ge l_{2k}+l_{2k+1}, \qquad (1\le k < \tfrac d2),\\
&&l_k\ge 0, \qquad (\tfrac d2 \le k < d),\\
&&l_{d-1}\ge p+1.
\end{eqnarray*}
The polyhedron $T^{\prime\prime} = \varphi(S^{\prime\prime})$ is given by the following inequalities:
\begin{eqnarray*}
&&n_k \ge 0 \qquad (1 \le k < d),\\
&&n_{d-1} \ge p+1.
\end{eqnarray*}
The set $\Lambda^{\prime\prime} := S^{\prime\prime} \cap L$ is mapped to $\Gamma^{\prime\prime} := T^{\prime\prime} \cap \intg^{d-1}$ by $\varphi$.
For a set $\{x_{\bar n}\}$ indexed by $\Gamma^{\prime\prime}$, one has
\[
\sum_{\bar n\in \Gamma^{\prime\prime}} x_{\bar n}
= \sum_{\bar l\in \Lambda^{\prime\prime}} x_{\varphi(\bar l)}.
\]
This completes the proof.
\end{proof}
}

\section{
Binomial and logarithmic addition theorems
for the azimuthal Fourier coefficients}
\label{AdditionthmsinVilenkinspolysphericalcoordinates} 

In even-dimensional Euclidean space $\R^d$, the kernels for a fundamental solution of the
polyharmonic equation are represented by a function of the distance between two 
points \eqref{greenpoly}, $\|\bfx-\bfxp\|$. In rotationally-invariant coordinates systems, one 
may represent the distance between the source and observation points in terms of the difference 
between azimuthal coordinates, namely
\eqref{rotationallyinvariantdist}. In Vilenkin's polyspherical coordinates, which are 
rotationally-invariant, one may also represent this distance in terms of the separation 
angle, namely \eqref{sphericallysymmetricdist}.
In a previous publication \cite{Cohl12pow}, addition 
theorems which arise when a fundamental solution of the polyharmonic equation is of a power-law in Euclidean space with odd-dimensions and 
for even-dimensions when $1\le k<d/2$ were derived. In this manuscript, we treat the even-dimensional
case for $k\ge d/2$, in which the functional dependence is either logarithmic or of binomial form.
By considering the equivalent azimuthal $\phi-\phi'$ and 
separation angle $\gamma$, Fourier and Gegenbauer expansions 
respectively of the kernels for the polyharmonic equation in 
even-dimensional space, can derive addition theorems for the 
azimuthal Fourier coefficients. 

The procedure for developing the addition theorems in this section is as follows.
Let $p=k-d/2\in\N_0,$ with $f_p:\R^d\times\R^d\to\R$,
$g_p:(\R^d\times\R^d)\setminus\{(\bfx,\bfx):\bfx\in\R^d\}\to\R$,
defined such that $f_p(\bfx,\bfxp):=\|\bfx-\bfxp\|^{2p}$, $g_p(\bfx,\bfxp):=\|\bfx-\bfxp\|^{2p}\log\|\bfx-\bfxp\|$. 
First, express $f_p,g_p$ in terms of their azimuthal separation angle Fourier cosine series
using \eqref{Eulerpolyexp}, \eqref{limitderivCheby}.  Call these the left-hand sides. Then 
express $f_p,g_p$ in terms of their separation angle Gegenbauer expansions 
given using 
\eqref{binomGeg}, \eqref{limitderivGeg} 
with $\mu=d/2-1$.
Call these the right-hand sides. By using the addition theorem for hyperspherical harmonics
\eqref{addthmhypsph}, we can expand the right-hand sides in terms of a product of 
separable harmonics in a chosen Vilenkin polyspherical coordinate system. 
Since Vilenkin's polyspherical
coordinate systems are rotationally-invariant, one of the coordinates will
correspond to the chosen azimuthal separation angle which
has been expanded about on the left-hand side of the azimuthal 
Fourier expansion. 
To obtain the addition theorem, one must re-arrange the 
multi-sum expression which arises on the right-hand side so 
that the outermost sum is the sum over the relevant azimuthal 
quantum number. Addition theorems are simply derived by comparing 
the azimuthal Fourier coefficients on both sides.

In order to obtain binomial and logarithmic addition theorems in a Vilenkin polyspherical
coordinate system, we relate respectively $\|\bfx-\bfxp\|^{2p}$ and
$\|\bfx-\bfxp\|^{2p}\log\|\bfx-\bfxp|$, in terms of their Fourier 
cosine series over the azimuthal separation angle, and their 
Gegenbauer polynomial expansions over the separation 
angle \eqref{sepang}. These equalities reduce respectively to
\begin{equation}
\left(\chi-\cos(\phi-\phi')\right)^p=\left(\frac{rr'}{RR'}\right)^p\left(\zeta-\cos\gamma\right)^p,
\label{binomaddthmform}
\end{equation}
and
\begin{eqnarray}
&&\hspace{-0.6cm}\log(2RR')\left(\chi-\cos(\phi-\phi')\right)^p+(\chi-\cos(\phi-\phi'))^p\log(\chi-\cos(\phi-\phi'))\nonumber\\[0.2cm]
&&\hspace{2cm}=\left(\frac{rr'}{RR'}\right)^p\left(
\log(2rr')\left(\zeta-\cos\gamma\right)^p
+(\zeta-\cos\gamma)^p\log(z-\cos\gamma) \right).
\label{logddthmform}
\end{eqnarray}

\subsection{Addition theorems in standard polyspherical coordinates}
{The binomial addition theorems in standard polyspherical coordinates are given by the following two theorems.
\begin{thm} \label{binomial1}
{Let $m\in\N_0$, $p=k-d/2\in\N_0$, $d\in2\N$, $0 \le m \le p$.}  Then
\begin{align*}
Q_{m-\frac12}^{p+\frac12}(\chi) &=
(-1)^{\frac{d}{2}-1+m}(2\pi)^{\frac{d}{2}-1}  (\chi^2-1)^{-\frac{p}{2}-\frac14}(p-m)!(p+m)!\left( \frac{ r r'}{R R'} \right)^p \pdh \nonumber \\
& \times \sum_{l_{d-2}=m}^p      \OmeH{d-2}{m}\cdots \sum_{l=l_2}^{p} \Omeh{1}{l_2}
\frac{(-1)^l}
{(p-l)!(l+p+d-2)!} Q_{l+\frac{d-3}{2}}^{p+\frac{d-1}{2}}(\zeta)
\end{align*}
\end{thm}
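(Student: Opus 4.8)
The plan is to expand one and the same object, the binomial fundamental solution $\jii_k^d$, in two ways and then to match the resulting Fourier cosine series in $\phi-\phi'$ coefficient by coefficient. By \eqref{unlogfourseries} and Theorem~\ref{bifourier}, $\jii_k^d(\bfx,\bfxp)=(2RR')^p(\chi-\cos(\phi-\phi'))^p$ already carries a finite Fourier cosine expansion whose coefficient of $\cos(m(\phi-\phi'))$, for $0\le m\le p$, equals
\[
i(-1)^{p+1}\sqrt{\tfrac{2}{\pi}}\,p!\,(2RR')^p(\chi^2-1)^{\frac p2+\frac14}\,\frac{(-1)^m\epsilon_m}{(p-m)!(p+m)!}\,Q_{m-\frac12}^{p+\frac12}(\chi).
\]
On the other hand, by \eqref{unlogVilen} and Theorem~\ref{bigeg}, the same function equals $(2rr')^p(\zeta-\cos\gamma)^p$ expanded in Gegenbauer polynomials $C_l^{d/2-1}(\cos\gamma)$. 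First I would substitute the addition theorem for hyperspherical harmonics \eqref{addthmhypsph} for each $C_l^{d/2-1}(\cos\gamma)$, using $l+\tfrac{d}{2}-1=\tfrac12(2l+d-2)$ to absorb the factor $2l+d-2$, and then insert the standard-polyspherical form \eqref{normhyper}--\eqref{standpoly} of the harmonics, which gives $Y_l^K(\wbfx)\overline{Y_l^K(\wbfxp)}=\tfrac{\expe^{im(\phi-\phi')}}{2\pi}\prod_{j=1}^{d-2}\OmeH{j}{l_{j+1}}$ with $l_1=l$ and $l_{d-1}=m$.

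Next I would convert the sum over $m\in\Z$ into a cosine series: since the innermost Gegenbauer factor in \eqref{thetaGeg} depends on the azimuthal quantum number only through its absolute value, the factor $\OmeH{d-2}{m}$ is even in $m$, so $\sum_{m\in\Z}\expe^{im(\phi-\phi')}(\cdots)=\sum_{m\in\N_0}\epsilon_m\cos(m(\phi-\phi'))(\cdots)$. Then I would apply Lemma~\ref{multisum} to reverse the double multi-sum $\sum_{l=0}^p\sum_K$ so that $\sum_{m=0}^p$ becomes the outermost summation, and read off the coefficient of $\cos(m(\phi-\phi'))$, which is
\[
i(-1)^{p-\frac d2}\frac{2^{\frac{d-1}{2}}}{\sqrt{\pi}}\,p!\,(\tfrac{d}{2}-2)!\,(2rr')^p\pdh\,\frac{(d-2)\pi^{d/2}}{\Gamma(d/2)}\,\frac{\epsilon_m}{2\pi}\sum_{l_{d-2}=m}^p\OmeH{d-2}{m}\cdots\sum_{l=l_2}^p\Omeh{1}{l_2}\frac{(-1)^l}{(p-l)!(l+p+d-2)!}Q_{l+\frac{d-3}{2}}^{p+\frac{d-1}{2}}(\zeta).
\]
Equating this with the first displayed coefficient for each fixed $m$ with $0\le m\le p$ and solving for $Q_{m-\frac12}^{p+\frac12}(\chi)$ yields the claimed identity after routine simplification: the factors $i$, $\epsilon_m$, $p!$ cancel; $(\tfrac{d}{2}-2)!\,(d-2)/\Gamma(\tfrac{d}{2})=2$ since $\Gamma(\tfrac{d}{2})=(\tfrac{d}{2}-1)(\tfrac{d}{2}-2)!$ and $d-2=2(\tfrac{d}{2}-1)$; the remaining powers of $2$ and $\pi$ collapse to $(2\pi)^{\frac d2-1}$; the sign becomes $(-1)^{p-d/2}/\left((-1)^{p+1}(-1)^m\right)=(-1)^{\frac d2-1+m}$; the factorials $(p-m)!(p+m)!$ pass to the numerator; and $(2rr')^p/(2RR')^p$ together with $1/(\chi^2-1)^{\frac p2+\frac14}$ give the prefactor $(rr'/RR')^p(\chi^2-1)^{-\frac p2-\frac14}\pdh$.

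The step I expect to demand the most care is the index bookkeeping rather than any single hard estimate: one must track how the single Gegenbauer summation index of Theorem~\ref{bigeg} becomes the leading quantum number $l=l_1$ of \eqref{standpoly}; verify that the reversed bounds delivered by Lemma~\ref{multisum} line up exactly with the product $\OmeH{d-2}{m}\cdots\Omeh{1}{l_2}$, so that precisely the $m$-dependent factor $\OmeH{d-2}{m}$ is the one extracted alongside $\epsilon_m$; and justify the passage from an exponential Fourier series to a cosine one via the parity of the angular eigenfunctions before the coefficients are compared. Everything remaining is the elementary manipulation of Gamma functions and powers of two sketched above.
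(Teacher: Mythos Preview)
Your proposal is correct and follows essentially the same approach as the paper: compare the Fourier expansion of $\jii_k^d$ from Theorem~\ref{bifourier} with the Gegenbauer expansion from Theorem~\ref{bigeg}, expand $C_l^{d/2-1}(\cos\gamma)$ via the addition theorem \eqref{addthmhypsph} in standard polyspherical harmonics \eqref{normhyper}--\eqref{standpoly}, reverse the multi-sum with Lemma~\ref{multisum}, and match Fourier coefficients. You have in fact supplied more of the constant bookkeeping than the paper does.
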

\begin{proof}
This equality can be found by comparing the binomial expansions of Theorems \ref{bifourier} and \ref{bigeg}.  The Gegenbauer polynomials in Theorem \ref{bigeg} can be expanded with the aid of \eqref{addthmhypsph}.  The normalized hyperspherical harmonics can be written in standard polyspherical coordinates, as seen in \eqref{normhyper}.  Using the concise notation of \eqref{standpoly}, the Gegenbauer expansion of \eqref{addthmhypsph} can be written as follows:
\begin{equation} \label{gegasomega}
    C_l^{d/2-1}(\cos \gamma) = \frac{(d-2)\pi^{\frac{d}{2}-1}}{(2l+d-2)\Gamma(d/2)} \sum_{K} \expe^{im(\phi-\phi')} \OmeH{d-2}{m}\cdots \Omeh{1}{l_2}.
\end{equation}
Now the expansion of Theorem \ref{bigeg} will contain a multi-sum.  Our goal is to compare the Fourier coefficients of the binomial expansions, this multi-sum will need to be reversed. This reversal is shown in Lemma \ref{multisum}.
This allows us to compare the Fourier coefficients, and complete the proof.
\end{proof}
\begin{thm} \label{binomial2}
Let $m\in\N_0$, $k\in\N$, $p=k-d/2\in\N_0$,
$d=4$ and $0\le m \le p$.  Then
\begin{align*}
Q_{m-\frac12}^{p+\frac12}(\chi) &= (-1)^{1-m} (\chi^2-1)^{-\frac{p}{2}-\frac14} (p-m)!(p+m)! \left( \frac{ r r'}{R R'} \right)^p \pth  \nonumber \\
& \times \sum_{l_2=m}^p   \frac{2^{2l_2} \left( l_2! \right)^2 (2l_2+1)(l_2-m)!}{  (l_2+m)!} 
\left( \sin{\theta} \sin{\theta'} \right)^{l_2}
{\sf P}_{l_2}^m(\cos{\theta_2}){\sf P}_{l_2}^m(\cos{\theta_2'}) \nonumber \\
& \times    \sum_{l=l_2}^{p} 
\frac{(-1)^l(2l+2)(l-l_2)!}{(p-l)!(l+p+2)! (l+l_2+1)!} 
C_{l-l_2}^{l_2+1}(\cos{\theta}) C_{l-l_2}^{l_2+1}(\cos{\theta'})
Q_{l+\frac{1}{2}}^{p+\frac{3}{2}}(\zeta).
\end{align*}
\end{thm}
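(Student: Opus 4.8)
The plan is to derive this statement as the $d=4$ specialization of Theorem~\ref{binomial1}, so the only real work is to make the abstract $\Omega$-notation explicit and to simplify the resulting constant. Putting $d=4$ gives $d/2-1=1$: the sign $(-1)^{\frac d2-1+m}$ becomes $(-1)^{1+m}=(-1)^{1-m}$, the factor $(2\pi)^{\frac d2-1}$ becomes $2\pi$, and $\pdh$ becomes $\pth$. By \eqref{standpoly} the abbreviation $\OmeH{d-2}{m}$ is, for $d=4$, the product $\Theta_2^4(l_2,m;\theta_2)\,\Theta_2^4(l_2,m;\theta_2')$ with $l_3=m$ the azimuthal quantum number, and $\Omeh{1}{l_2}$ is the product $\Theta_1^4(l,l_2;\theta_1)\,\Theta_1^4(l,l_2;\theta_1')$ with $l=l_1$; here I write $\theta_1$ for the angle the statement calls $\theta$.

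First I would substitute the explicit formula \eqref{thetaGeg} for $\Theta_j^d$. With $j=1$, $d=4$ this gives a numerical factor $\frac{\Gamma(l_2+2)}{2l_2+2}=\frac{l_2!}{2}$, the radical $\sqrt{\frac{2^{2l_2+2}(2l+2)(l-l_2)!}{\pi(l+l_2+1)!}}$, a power $(\sin\theta_1)^{l_2}$, and a Gegenbauer polynomial $C_{l-l_2}^{l_2+1}(\cos\theta_1)$; with $j=2$, $d=4$ it gives $\frac{\Gamma(m+\frac32)}{2m+1}$, the radical $\sqrt{\frac{2^{2m+1}(2l_2+1)(l_2-m)!}{\pi(l_2+m)!}}$, a power $(\sin\theta_2)^{m}$, and $C_{l_2-m}^{m+\frac12}(\cos\theta_2)$. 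Since each $\Omega$ is a product of a primed and an unprimed $\Theta$, each of these constants appears squared. The half-integer-order Gegenbauer factor I would convert with \eqref{JacobiGeg}, which yields $(\sin\theta_2)^m\,C_{l_2-m}^{m+\frac12}(\cos\theta_2)=\frac{1}{(\frac12)_m(-2)^m}{\sf P}_{l_2}^m(\cos\theta_2)$ (using $1-\cos^2\theta_2=\sin^2\theta_2$); this absorbs the spurious $(\sin\theta_2)^m$ and produces the Ferrers functions ${\sf P}_{l_2}^m(\cos\theta_2)\,{\sf P}_{l_2}^m(\cos\theta_2')$ of the statement, while $(\sin\theta_1\sin\theta_1')^{l_2}$ survives unchanged.

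The remaining, and most delicate, step is the constant bookkeeping. Using $\Gamma(m+\frac32)=(\frac12)_m\sqrt\pi\,\frac{2m+1}{2}$ (the duplication formula \cite[(5.5.5)]{NIST:DLMF}), the squared $(\frac12)_m$ coming from $\Gamma(m+\frac32)$ cancels the squared $(\frac12)_m$ coming from \eqref{JacobiGeg}, the powers of $2$ collapse, and the $\Theta_2^4$-part reduces to $\frac12(2l_2+1)\frac{(l_2-m)!}{(l_2+m)!}$, while the $\Theta_1^4$-part reduces to $\frac{(l_2!)^2\,2^{2l_2}(2l+2)(l-l_2)!}{\pi(l+l_2+1)!}$. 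The leftover pure numbers $2\pi$ (global prefactor), $\frac12$ (from $\Theta_2^4$) and $\frac1\pi$ (from $\Theta_1^4$) multiply to exactly $1$, and combining the residual $l$-dependent factors with $\frac{(-1)^l}{(p-l)!(l+p+2)!}Q_{l+\frac12}^{p+\frac32}(\zeta)$ already in Theorem~\ref{binomial1} reproduces the inner summand $\frac{(-1)^l(2l+2)(l-l_2)!}{(p-l)!(l+p+2)!(l+l_2+1)!}$. No reversal of the order of summation is needed here, since Theorem~\ref{binomial1} is already written with the sums arranged as $\sum_{l_2=m}^p\sum_{l=l_2}^p$; the only obstacle is to check that no power of $2$, factorial, gamma, or Pochhammer symbol has been mislabeled, which the reductions above indicate comes out cleanly.
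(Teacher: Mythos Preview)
Your proposal is correct and follows essentially the same approach as the paper: specialize Theorem~\ref{binomial1} to $d=4$, unpack the $\Omega$-symbols via \eqref{standpoly} and \eqref{thetaGeg}, convert the order-$m+\tfrac12$ Gegenbauer factors to Ferrers functions with \eqref{JacobiGeg}, and simplify. You in fact carry out the constant bookkeeping in more detail than the paper, which merely records the intermediate identity \eqref{d4omegaF} and then says ``simplification completes the proof.''
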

\begin{proof}
To prove this, take the result of Theorem \ref{binomial1} and use $d=4$.  This simplifies the multi-sum of $\Omega$ functions:
\begin{align} \label{d4omega}
    \sum_{l_{d-2}}\OmeH{d-2}{m}\cdots
\sum_{l}\Omega_{1}^{d}
\left(
l,l_2
;\!\begin{array}{c} {\theta} \\[0pt] {\theta'}
\end{array}
\right) \rightarrow \sum_{l_2} 
\Omega_{2}^{4}
\left(
l_2,m
;\!\begin{array}{c} {\theta_2} \\[0pt] {\theta'_2}
\end{array}
\right)
\sum_l
\Omega_{1}^{4}
\left(
l,l_2
;\!\begin{array}{c} {\theta} \\[0pt] {\theta'}
\end{array}
\right).
\end{align}
By \eqref{standpoly}, we know the $\Omega$-functions are products of $\Theta$-functions, which are defined in \eqref{thetaGeg}. A couple of the Gegenbauer polynomials that appear can be rewritten as Ferrers functions using \eqref{JacobiGeg}.  This allows \eqref{d4omega} to be written as:
\begin{eqnarray} \label{d4omegaF}
&& \hspace{-1cm}  \sum_{l_2} 
\Omega_{2}^{4}
\left(
l_2,m
;\!\begin{array}{c} {\theta_2} \\[0pt] {\theta'_2}
\end{array}
\right)
\sum_l
\Omega_{1}^{4}
\left(
l,l_2
;\!\begin{array}{c} {\theta} \\[0pt] {\theta'}
\end{array}
\right) = \frac{1}{\pi} \sum_{l_2} \frac{  2^{2l_2}\left( l_2! \right)^2 (2l_2+1)(l_2-m)!}{ (l_2+m)!} \nonumber\\
&&\hspace{1cm}\times (\sin \theta \sin \theta')^{l_2} {\sf P}_{l_2}^m ( \cos \theta_2) {\sf P}_{l_2}^m ( \cos \theta'_2)\sum_l
 \frac{(l+1)(l-l_2)!}{(l+l_2+1)!} C_{l-l_2}^{l_2+1}(\cos \theta)  C_{l-l_2}^{l_2+1}(\cos \theta').
\end{eqnarray}
Then simplification completes the proof.
\end{proof}
}
In standard polyspherical coordinates, one has the following {logarithmic}
addition theorem {for $0\le m\le p$}.
\begin{thm} \label{LogPSexpand}
{Let $m\in\N_0$, $k\in\N$, $p=k-d/2\in\N_0$,
$d\in2\N$ and $0\le m \le p$.  Then}
{
\begin{eqnarray}
&& \hspace{-0.6cm}\frac{1}{(p+m)!(p-m)!}
\left(\log(RR')
+\log(\chi+\sqrt{\chi^2-1})+2H_{2p}-H_{p+m}-H_{p-m}
\right)
Q_{m-\frac12}^{p+\frac12}(\chi)\nonumber\\
&&\hspace{-0.4cm} +\frac{1}{(p+m)!}
\sum_{k=0}^{p-m-1}
\frac{(2m+2k+1)}
{k!(p-m-k)(p+m+k+1)}
\left[1+\frac{k!(p+m)!}{(2m+k)!(p-m)!}\right]
Q_{m-\frac12}^{m+k+\frac12}(\chi)
\nonumber\\
&&\hspace{-0.4cm} +\frac{1}{(p-m)!}
\sum_{k=0}^{m-1}
\frac{2k+1}{(m+k)!(p-k)(p+k+1)}Q_{m-\frac12}^{k+\frac12}(\chi)\nonumber\\
&&\hspace{-0.4cm}=\left(\frac{rr'}{RR'}\right)^p(-1)^{m+\fdt-1}(2\pi)^{\fdt-1}
\frac{(\zeta^2-1)^{\frac{p}2+\frac{d-1}{4}}}
{(\chi^2-1)^{\frac{p}{2}+\frac14}}\nonumber\\
&&\hspace{-0.1cm}\times\biggl\{
\sum_{l_{d-2}=m}^p\OmeH{d-2}{m}\cdots
\sum_{l=l_2}^p\Omeh{1}{l_2}\nonumber\\
&&\hspace{0.4cm}\times\biggl[ 
\frac{(-1)^lQ_{l+\frac{d-3}{2}}^{p+\frac{d-1}{2}}(\zeta)}{(p-l)!(p+l+d-2)!}
\left(
\log(rr')+\log(\zeta\!+\!\sqrt{\zeta^2-1})\!+\!2H_{2p+d-2}\!+\!H_p-H_{p+\fdt-1}
\!-\!H_{p+l+d-2}\!-\!H_{p-l}
\right)\nonumber\\
&&\hspace{0.9cm}+\frac{(-1)^l}{(p+l+d-2)!}\sum_{k=0}^{p-l-1}
\frac{(2l+2k+d-1)}
{k!(p-l-k)(p+l+k+d-1)}
\left[1+\frac{k!(p+l+d-2)!}{(k+2l+d-2)!(p-l)!}\right]
Q_{l+\frac{d-3}{2}}^{k+l+\frac{d-1}{2}}(\zeta)
\nonumber\\
&&\hspace{0.9cm}+\frac{(-1)^l}{(p-l)!}\sum_{k=0}^{l+\fdt-2}
\frac{2k+1}{(l+\fdt+k-1)!(p+\fdt-k-1)(p+\fdt+k)}
Q_{l+\frac{d-3}{2}}^{k+\frac12}(\zeta)\biggr]\nonumber\\
&&\hspace{0.4cm}+\sum_{l_{d-2}=m}^\infty \OmeH{d-2}{m}\cdots
\sum_{l_2=l_3}^\infty \OmeH{2}{l_3}\nonumber\\
&&\hspace{1.1cm}\times\sum_{l=\text{max}(p+1,l_2)}^\infty\frac{(-1)^{p+1}(l-p-1)!}
{(p+l+d-2)!}
\Omeh{1}{l_2} Q_{l+\frac{d-3}{2}}^{p+\frac{d-1}{2}}(\zeta)\biggr\}.
\label{thm12it}
\end{eqnarray}}
\end{thm}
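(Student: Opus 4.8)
The plan is to obtain \eqref{thm12it} by comparing the $\cos(m(\phi-\phi'))$-Fourier coefficients of the two sides of the kernel identity \eqref{logddthmform}, for $0\le m\le p$. The left-hand side of \eqref{logddthmform} I would expand as a Fourier cosine series in $\psi:=\phi-\phi'$ by applying \eqref{Eulerpolyexp} to $\log(2RR')(\chi-\cos\psi)^p$ and \eqref{limitderivCheby} to $(\chi-\cos\psi)^p\log(\chi-\cos\psi)$, each with $z=\chi$, $x=\cos\psi$, using $T_n(\cos\psi)=\cos(n\psi)$. Since $\log\tfrac{\chi+\sqrt{\chi^2-1}}{2}+\log(2RR')=\log(RR')+\log(\chi+\sqrt{\chi^2-1})$, and accounting for the sign flip between the first two terms of \eqref{limitderivCheby}, the $\cos(m\psi)$-coefficient, after factoring out the common prefactor $i(-1)^{p+1}\sqrt{2/\pi}\,p!\,(\chi^2-1)^{p/2+1/4}\epsilon_m(-1)^m$, is precisely the three-line left-hand side of \eqref{thm12it} (the $n\ge p+1$ tail of \eqref{limitderivCheby} contributes nothing for $m\le p$; for $m\ge1$ one uses $2^{3/2}/\sqrt\pi=\epsilon_m\sqrt{2/\pi}$ to normalise the last two $k$-sums, and for $m=0$ those $k$-sums are empty).

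For the right-hand side of \eqref{logddthmform} I would expand $\log(2rr')(\zeta-\cos\gamma)^p$ by \eqref{binomGeg} and $(\zeta-\cos\gamma)^p\log(\zeta-\cos\gamma)$ by Lemma \ref{biloggeg} (i.e. \eqref{limitderivGeg}), both with $z=\zeta$, $x=\cos\gamma$, $\mu=d/2-1$; this requires $\mu\in\N$, i.e. $d\ge4$, whereas for the degenerate case $d=2$ there are no polyspherical angles besides $\phi$ and the claimed identity collapses to \eqref{limitderivCheby} itself. One uses $\log\tfrac{\zeta+\sqrt{\zeta^2-1}}{2}+\log(2rr')=\log(rr')+\log(\zeta+\sqrt{\zeta^2-1})$, and notes that the harmonic-number string $2H_{2p+d-2}+H_p-H_{p+d/2-1}$ appearing in \eqref{thm12it} is exactly the one intrinsic to the first line of \eqref{limitderivGeg} at $\mu=d/2-1$, so no $\beta_{p,d}$ intervenes (we work with the combination \eqref{logddthmform} directly, not with $\li_k^d$). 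Each $C_l^{d/2-1}(\cos\gamma)$ is then replaced, through the addition theorem for hyperspherical harmonics \eqref{addthmhypsph} in the form \eqref{gegasomega}, by a $K$-multi-sum over the standard polyspherical quantum numbers $l\ge l_2\ge\cdots\ge l_{d-2}\ge m\ge0$ of products of the $\Theta_j^d$ of \eqref{thetaGeg} (collected via the $\Omega$-notation \eqref{standpoly}); the exponential $e^{im\psi}$ becomes $\epsilon_m\cos(m\psi)$ exactly as in the proof of Theorem \ref{hopfaddthm}. Here the explicit factor $l+\mu$ in \eqref{binomGeg} and \eqref{limitderivGeg} cancels the $2l+d-2$ denominator of \eqref{addthmhypsph}, leaving an $l$-independent constant which, combined with $\Gamma(\mu)=(d/2-2)!$ and the remaining powers of $2$ and $\pi$, assembles into the prefactor $(-1)^{m+d/2-1}(2\pi)^{d/2-1}(\zeta^2-1)^{p/2+(d-1)/4}(\chi^2-1)^{-p/2-1/4}(rr'/RR')^p$ of \eqref{thm12it} (one may rewrite $(\zeta^2-1)^{1/2}$ via \eqref{zetasquare}).

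The final step is to reverse the polyspherical multi-sums so that the outermost sum runs over $m$, and then read off coefficients. The four finite-$l$ terms of \eqref{limitderivGeg} (with $l$ ranging to $p$ or $p-1$) together with the binomial \eqref{binomGeg} all produce sums of the shape $\sum_{l=0}^p\sum_K$, which Lemma \ref{multisum} converts to $\sum_{m=0}^p\sum_{l_{d-2}=m}^p\cdots\sum_{l=l_2}^p$; the tail term of \eqref{limitderivGeg} (with $l\ge p+1$) produces $\sum_{l=p+1}^\infty\sum_K$, which Lemma \ref{lp1sumlem} reverses, and of whose two resulting pieces only the $m\le p$ piece — the one carrying the lower bound $\max(l_2,p+1)$ on $l$ — is relevant in the present range $0\le m\le p$. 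The inner finite $k$-sums inherited from \eqref{limitderivCheby} and \eqref{limitderivGeg} are spectators: their summation index is unrelated to the hyperspherical quantum numbers, so they ride along untouched (the coincidence of the letter $k$ with the power of the Laplacian is only notational). Matching the $\cos(m\psi)$-coefficients for $0\le m\le p$ and cancelling the common prefactor then yields \eqref{thm12it}.

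The main obstacle is the accounting in this last step rather than any single hard estimate: one must confirm that the five nested-sum reversals lose and double-count nothing, that the lower bound $\max(l_2,p+1)$ produced by Lemma \ref{lp1sumlem} is exactly the one appearing in the penultimate line of \eqref{thm12it}, and — the most error-prone point — that all the harmonic numbers ($2H_{2p}$ on the left versus $2H_{2p+d-2}$ on the right, the $H_{p+l+d-2}$, $H_{p-l}$, $H_p$, $H_{p+d/2-1}$, $H_{p+m}$, $H_{p-m}$) land in exactly the displayed combinations with the correct signs. I would guard against slips by cross-checking the specialisations $p=0$ (which should collapse to the corollary expansion \eqref{zmxgeg}) and $d=4$ (where the $\Theta$-products simplify as in the proof of Theorem \ref{binomial2}).
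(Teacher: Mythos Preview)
Your proposal is correct and follows essentially the same route as the paper: equate the azimuthal Fourier and polyspherical Gegenbauer expansions of the logarithmic kernel, expand $C_l^{d/2-1}(\cos\gamma)$ via the addition theorem \eqref{addthmhypsph}/\eqref{gegasomega}, reverse the multi-sums with Lemmas \ref{multisum} and \ref{lp1sumlem}, and match the $\cos(m(\phi-\phi'))$-coefficients for $0\le m\le p$. The only cosmetic difference is that the paper routes the argument through the packaged expansions of $\li_k^d$ (Theorems \ref{fourierlog} and \ref{gegexpand}), where the $\beta_{p,d}$ term is carried along and then cancels, whereas you work directly with the bare kernel identity \eqref{logddthmform} and the underlying Lemmas \eqref{Eulerpolyexp}, \eqref{limitderivCheby}, \eqref{binomGeg}, \eqref{limitderivGeg}; your version is in fact more explicit about the harmonic-number bookkeeping and the $d=2$ degeneracy than the paper's own proof.
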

\begin{proof}
{
To prove this theorem, we take the Fourier and Gegenbauer expansions of logarithmic fundamental solutions, seen in Theorems \ref{fourierlog} and \ref{gegexpand} respectively.  For the Gegenbauer expansion, the substitution of $\mu = d/2 -1$ is used.
Again, the Gegenbauer polynomials can be rewritten as in \eqref{gegasomega}.
The Gegenbauer expansion contains three different multi-sums, one contains the sum of $l$ from zero to $p$, the next contains the sum of $l$ from zero to $p+1$, and the last contains the sum of $l$ from $p+1$ to infinity.  The first two of these sums can be reversed as in Lemma \eqref{multisum}, where the later $p$ can be replaced with $p+1$.
For the final multi-sum, {Lemma \ref{lp1sumlem} is used. Since only the region $0\le m \le p$ is required, only the first term of the split multi-sum is used.}
When comparing the Fourier coefficients of the expansions, only those terms where $0 \le m \le p$ are used.  Thus only the first two parts of the above multi-sum are used.
}
\end{proof}

The following corollary results substituting $d=4$ in
the above logarithmic addition theorem \eqref{thm12it}.

\begin{cor}
Let $0\le m\le p$, $p=k-d/2\in\N_0$, $k\in\N$ and $d=4$. Then
\begin{eqnarray}
&& \hspace{-0.5cm}\frac{1}{(p+m)!(p-m)!}
\left(\log(RR')
+\log(\chi+\sqrt{\chi^2-1})+2H_{2p}-H_{p+m}-H_{p-m}
\right)
Q_{m-\frac12}^{p+\frac12}(\chi)\nonumber\\
&& \hspace{-0.25cm}+\frac{1}{(p+m)!}
\sum_{k=0}^{p-m-1}
\frac{(2m+2k+1)
}
{k!(p-m-k)(p+m+k+1)}
\left[1+\frac{k!(p+m)!}{(2m+k)!(p-m)!}\right]
Q_{m-\frac12}^{m+k+\frac12}(\chi)
\nonumber\\
&& \hspace{-0.25cm}+\frac{1}{(p-m)!}
\sum_{k=0}^{m-1}
\frac{2k+1}{(m+k)!(p-k)(p+k+1)}Q_{m-\frac12}^{k+\frac12}(\chi)\nonumber\\
&&=2\,(-1)^{m+1}
\left(\frac{rr'}{RR'}\right)^p
\frac{(\zeta^2-1)^{\frac{p}2+\frac{3}{4}}}
{(\chi^2-1)^{\frac{p}{2}+\frac14}}
\biggl\{
\sum_{l_{2}=m}^p \frac{(2l_2+1)(l_2-m)!}{(l_2+m)!}
{\sf P}_{l_2}^m(\cos\theta_2){\sf P}_{l_2}^m(\cos\theta_2')
2^{2l_2}(l_2!)^2(\sin\theta\sin\theta')^{l_2}\nonumber\\
&&\hspace{0.5cm}\times\sum_{l=l_2}^p \frac{(-1)^l(l+1)(l-l_2)!}{(l+l_2+1)!}
C_{l-l_2}^{l_2+1}(\cos\theta)
C_{l-l_2}^{l_2+1}(\cos\theta')\nonumber\\
&&\hspace{1.0cm}\times\biggl[ 
\frac{1}{(p-l)!(p+l+2)!}\left(\log(rr')+\log(\zeta+\sqrt{\zeta^2-1})
+2H_{2p+2}+H_p-H_{p+1}-H_{p+l+2}-H_{p-l}\right)Q_{l+\fdt}^{p+\frac32}(\zeta)\nonumber\\
&&\hspace{1.5cm}+\frac{1}{(p+l+2)!}\sum_{k=0}^{p-l-1}
\frac{(2l+2k+3)
}
{k!(p-l-k)(p+l+k+3)}
\left[1+\frac{k!(p+l+2)!}{(k+2l+2)!(p-l)!}\right]
Q_{l+\frac{1}{2}}^{k+l+\frac{3}{2}}(\zeta)
\nonumber\\
&&\hspace{1.5cm}+\frac{1}{(p-l)!}\sum_{k=0}^{l}
\frac{2k+1}{(l+k+1)!(p-k+1)(p+k+2)}
Q_{l+\frac{1}{2}}^{k+\frac12}(\zeta)\biggr]\nonumber\\
&&\hspace{0.5cm}+(-1)^{p+1}\sum_{l_{2}=m}^\infty 
\frac{(2l_2+1)(l_2-m)!}{(l_2+m)!}
{\sf P}_{l_2}^m(\cos\theta_2){\sf P}_{l_2}^m(\cos\theta_2')
2^{2l_2}(l_2!)^2(\sin\theta\sin\theta')^{l_2}\nonumber\\
&&\hspace{1.0cm}\times\sum_{l=\max(l_2,p+1)}^\infty \frac{(l+1)(l-l_2)!}{(l+l_2+1)!}
C_{l-l_2}^{l_2+1}(\cos\theta)
C_{l-l_2}^{l_2+1}(\cos\theta')
\frac{(l-p-1)!}{(p+l+2)!}
Q_{l+\frac{1}{2}}^{p+\frac{3}{2}}(\zeta)\biggr\}.
\label{addnthm7.4}
\end{eqnarray}
\end{cor}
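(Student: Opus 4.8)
The plan is to obtain \eqref{addnthm7.4} by specializing the logarithmic addition theorem \eqref{thm12it} to $d=4$, in exact parallel with the way Theorem~\ref{binomial2} was deduced from Theorem~\ref{binomial1}. First I would set $d=4$ everywhere in \eqref{thm12it}. Since then $d-2=2$, each nested sum over the standard polyspherical quantum numbers collapses to a single inner sum: the finite multi-sum becomes $\sum_{l_2=m}^{p}\OmeH{2}{m}\sum_{l=l_2}^{p}\Omeh{1}{l_2}$, and the infinite one becomes $\sum_{l_2=m}^{\infty}\OmeH{2}{m}\sum_{l=\max(l_2,p+1)}^{\infty}\Omeh{1}{l_2}$. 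The left-hand side of \eqref{thm12it} does not involve $d$ at all, so it reappears verbatim as the left-hand side of \eqref{addnthm7.4}.

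The key structural step is then to rewrite each of these double sums of $\Omega$-functions in Ferrers-function form. This is exactly the identity \eqref{d4omegaF} established inside the proof of Theorem~\ref{binomial2}: via \eqref{standpoly}, \eqref{thetaGeg} and \eqref{JacobiGeg}, the $\theta_2,\theta_2'$ factor turns into $\frac{2^{2l_2}(l_2!)^2(2l_2+1)(l_2-m)!}{(l_2+m)!}\,{\sf P}_{l_2}^m(\cos\theta_2)\,{\sf P}_{l_2}^m(\cos\theta_2')$, the $\theta,\theta'$ factor turns into $(\sin\theta\sin\theta')^{l_2}\frac{(l+1)(l-l_2)!}{(l+l_2+1)!}\,C_{l-l_2}^{l_2+1}(\cos\theta)\,C_{l-l_2}^{l_2+1}(\cos\theta')$, and an overall constant $\frac1\pi$ is produced. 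Since \eqref{d4omegaF} is a rewriting of the summand that is independent of the summation ranges, it applies verbatim to both the finite and the infinite double sum occurring here; this independence of range is the one point in the argument that really deserves a word of justification, and it is immediate.

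What remains is routine $d=4$ bookkeeping inside the surviving coefficients: the Legendre functions of $\zeta$ become $Q_{l+\frac12}^{p+\frac32}(\zeta)$, $Q_{l+\frac12}^{k+l+\frac32}(\zeta)$ and $Q_{l+\frac12}^{k+\frac12}(\zeta)$; the harmonic numbers $2H_{2p+d-2}$, $H_{p+\fdt-1}$, $H_{p+l+d-2}$ become $2H_{2p+2}$, $H_{p+1}$, $H_{p+l+2}$; the factorials $(p+l+d-2)!$ and $(k+2l+d-2)!$ become $(p+l+2)!$ and $(k+2l+2)!$; the inner $k$-sum over $0\le k\le l+\fdt-2$ becomes $0\le k\le l$ with denominator $(l+k+1)!(p-k+1)(p+k+2)$; the power $(\zeta^2-1)^{\frac p2+\frac{d-1}{4}}$ becomes $(\zeta^2-1)^{\frac p2+\frac34}$; the common factor $(-1)^l$ in the three bracketed terms of \eqref{thm12it} is absorbed into the $l$-sum prefactor; and the overall constant $(-1)^{m+\fdt-1}(2\pi)^{\fdt-1}$, multiplied by the $\frac1\pi$ coming from \eqref{d4omegaF}, gives $2(-1)^{m+1}$. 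Collecting terms yields \eqref{addnthm7.4}. There is no genuine mathematical obstacle: the only real difficulty is executing the roughly dozen $d=4$ substitutions without error.
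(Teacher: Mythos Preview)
Your proposal is correct and follows essentially the same approach as the paper: specialize Theorem~\ref{LogPSexpand} (equation \eqref{thm12it}) to $d=4$ and rewrite the $\Omega$-functions using the identity \eqref{d4omegaF} from the proof of Theorem~\ref{binomial2}. Your write-up is more explicit about the $d=4$ bookkeeping and about why \eqref{d4omegaF} applies equally to the finite and infinite double sums, but the underlying argument is identical to the paper's.
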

\begin{proof}
{ 
To prove this, take the result of Theorem \ref{LogPSexpand} for the $d=4$ case.  The $\Omega$-functions can be simplified as in the proof of Theorem \ref{binomial2}, as seen in \eqref{d4omegaF}.
This completes the proof.}
 \end{proof}

\noindent In standard polyspherical coordinates, one has the following {logarithmic}
addition theorem {for $m\ge p+1$}.

 {
\begin{thm} \label{sphereExpandmgp}
Let $m\in\N_0$, $p=k-d/2\in\N_0$, $d\in2\N$, $m \ge p+1$.  Then
\begin{align}
    Q_{m-\frac12}^{p+\frac12}(\chi) &= (-1)^{\frac{d}{2}-1} (2 \pi)^{\frac{d}{2}-1} (\zeta^2-1)^{\frac{p}{2}+\frac{d-1}{4}}(\chi^2-1)^{-\frac{p}{2}-\frac14}\frac{(p+m)!}{(m-p-1)!} \left( \frac{ r r'}{R R'}\right)^p \nonumber \\
    & \times \sum_{l_{d-2}=p+1}^{\infty} \OmeH{d-2}{m}\cdots \sum_{l=l_2}^{\infty} \Omeh{1}{l_2} \frac{(l-p-1)!}
{(p+l+d-2)!}
Q_{l+\frac{d-3}{2}}^{p+\frac{d-1}{2}}(\zeta).
\label{addt7.5}
\end{align}
\end{thm}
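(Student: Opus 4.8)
The plan is to run the same comparison of azimuthal Fourier coefficients that underlies Theorem~\ref{LogPSexpand}, but to observe that for $m\ge p+1$ nearly all terms on both sides drop out, so that no logarithm survives. I would start from the two expansions of the logarithmic fundamental solution written in terms of the separation angle $\gamma$: the azimuthal Fourier expansion of Theorem~\ref{fourierlog} on the left, and the Gegenbauer expansion of Theorem~\ref{gegexpand} with $\mu=d/2-1$ on the right. On the left-hand side the first four sums run only over $m$ from $0$ to $p$ (or $p-1$), so for $m\ge p+1$ the only surviving contribution is the last sum, $2\sum_{m=p+1}^\infty\frac{(m-p-1)!\cos(m(\phi-\phi'))}{(p+m)!}Q_{m-\frac12}^{p+\frac12}(\chi)$, which carries no logarithm. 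On the right-hand side the first four terms all run over $l$ from $0$ to $p$ (or $p-1$); after expanding $C_l^\mu(\cos\gamma)$ by the hyperspherical addition theorem \eqref{addthmhypsph}--\eqref{gegasomega}, the azimuthal quantum number appearing there is bounded above by $l\le p$, so none of these terms contribute to the coefficient of $\cos(m(\phi-\phi'))$ when $m\ge p+1$. Hence only the final ($l$ from $p+1$ to $\infty$) term $(-1)^\mu\sum_{l=p+1}^\infty\frac{(l+\mu)(l-p-1)!C_l^\mu(\cos\gamma)}{(p+l+2\mu)!}Q_{l+\mu-\frac12}^{p+\mu+\frac12}(\zeta)$ remains, and it too is log-free --- which is precisely why the identity \eqref{addt7.5} contains no logarithmic terms.

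Next I would expand $C_l^\mu(\cos\gamma)$ in that surviving term using \eqref{gegasomega}, exploiting the cancellation $(l+\mu)C_l^{d/2-1}(\cos\gamma)=\frac{(d-2)\pi^{d/2-1}}{2\Gamma(d/2)}\sum_K\expe^{im(\phi-\phi')}\OmeH{d-2}{m}\cdots\Omeh{1}{l_2}$. The resulting multi-sum over the standard polyspherical quantum numbers is exactly the object ${\sf Y}_2$ of Lemma~\ref{lp1sumlem}; reversing the order of summation by that lemma and keeping only the $m\ge p+1$ branch --- on which the lower limit $\max(l_2,p+1)$ of the innermost $l$-sum collapses to $l_2$, and the vanishing terms with $p+1\le l_{d-2}<m$ are harmlessly absorbed into the stated range $l_{d-2}\ge p+1$ --- brings the sum over the azimuthal quantum number to the outside. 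Folding the $\pm m$ hyperspherical harmonics into $\cos(m(\phi-\phi'))$ as in the proof of Theorem~\ref{binomial1}, I can then equate the coefficients of $\cos(m(\phi-\phi'))$ on the two sides and solve for $Q_{m-\frac12}^{p+\frac12}(\chi)$.

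The rest is bookkeeping of constants: propagating $\mu=d/2-1$ consistently ($2\mu=d-2$, $\Gamma(\mu)=\Gamma(d/2-1)$, $Q_{l+\mu-\frac12}^{p+\mu+\frac12}=Q_{l+\frac{d-3}{2}}^{p+\frac{d-1}{2}}$), combining the prefactor powers $(\zeta^2-1)^{\frac{p+\mu}{2}+\frac14}=(\zeta^2-1)^{\frac p2+\frac{d-1}{4}}$ and $(2rr')^p/(2RR')^p=\left(\frac{rr'}{RR'}\right)^p$, and using the identity $\frac{(d-2)\Gamma(d/2-1)}{\Gamma(d/2)}=2$ together with the $\sqrt{2\pi}$ from Theorem~\ref{fourierlog}, the folding factor $2$, and the Gegenbauer prefactor $2^{\mu-\frac12}/\sqrt\pi$ to collapse the overall constant to $(-1)^{d/2-1}(2\pi)^{d/2-1}$. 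I expect the main obstacle to be nothing structural but rather the careful accounting of exactly which terms vanish for $m\ge p+1$ --- in particular confirming that no logarithmic contribution can reach an azimuthal index $\ge p+1$ --- and the precise application of the multi-sum reversal of Lemma~\ref{lp1sumlem} with the correct branch and limits. Once those are settled, the theorem follows by matching a single pair of Fourier coefficients.
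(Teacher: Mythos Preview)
Your proposal is correct and follows essentially the same approach as the paper: compare the azimuthal Fourier expansion of Theorem~\ref{fourierlog} with the Gegenbauer expansion of Theorem~\ref{gegexpand} at $\mu=d/2-1$, observe that for $m\ge p+1$ only the final (log-free) sum on each side survives, expand $C_l^{d/2-1}(\cos\gamma)$ via \eqref{gegasomega}, reverse the multi-sum using the $m\ge p+1$ branch of Lemma~\ref{lp1sumlem}, and match Fourier coefficients. Your write-up is in fact more explicit than the paper's own proof, in particular your remark that the extension of the $l_{d-2}$-range down to $p+1$ (rather than $m$ as Lemma~\ref{lp1sumlem} gives) is harmless because $\Omega_{d-2}^d$ vanishes for $l_{d-2}<m$.
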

\begin{proof}
Again, this proofs starts with the equal logarithmic expansions of \ref{fourierlog} and \ref{gegexpand}.  With the Gegenbauer expansion, the polynomials are rewritten as seen in \eqref{gegasomega}.  The Fourier coefficients of interest only appear in the multi-sum that has $l$ summing over $p+1$ to infinity.  
This multi-sum can be seen in Lemma \ref{lp1sumlem}.  The second term of this multi-sum contains values of $m$ that are greater than $p$, which are the terms needed for the comparison to complete the proof.
\end{proof}
\begin{cor}
Let $m\in\N_0$, $p=k-d/2\in\N_0$, $d=4$, $m \ge p+1$.  Then
\begin{eqnarray}
&&\hspace{-1.25cm}   Q_{m-\frac12}^{p+\frac12}(\chi) = -2 (\zeta^2-1)^{\frac{p}{2}+\frac{3}{4}}(\chi^2-1)^{-\frac{p}{2}-\frac14}\frac{(p+m)!}{(m-p-1)!} \left( \frac{ r r'}{R R'}\right)^p \nonumber \\
    && \hspace{0.5cm}\times \sum_{l_2=p+1}^{\infty} \frac{  2^{2l_2}\left( l_2! \right)^2 (2l_2+1)(l_2-m)!}{ (l_2+m)!} \nonumber\\
&&\hspace{0.75cm}\times (\sin \theta \sin \theta')^{l_2} {\sf P}_{l_2}^m ( \cos \theta_2) {\sf P}_{l_2}^m ( \cos \theta'_2)\sum_{l=l_2}^{\infty}
 \frac{(l+1)(l-l_2)!}{(l+l_2+1)!} C_{l-l_2}^{l_2+1}(\cos \theta)  C_{l-l_2}^{l_2+1}(\cos \theta')
Q_{l-\frac12}^{p+\frac32}(\zeta).
\end{eqnarray}
\end{cor}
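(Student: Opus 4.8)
The plan is to obtain this corollary by simply specializing Theorem~\ref{sphereExpandmgp} to $d=4$ and then collapsing the product of $\Omega$-functions exactly as was done in the proof of Theorem~\ref{binomial2}. Putting $d=4$ in \eqref{addt7.5} one has $(2\pi)^{d/2-1}=2\pi$, $(-1)^{d/2-1}=-1$, $(\zeta^2-1)^{\frac{p}{2}+\frac{d-1}{4}}=(\zeta^2-1)^{\frac{p}{2}+\frac34}$, and $Q_{l+\frac{d-3}{2}}^{p+\frac{d-1}{2}}(\zeta)=Q_{l-\frac12}^{p+\frac32}(\zeta)$; moreover the nested multi-sum $\sum_{l_{d-2}=p+1}^\infty\OmeH{d-2}{m}\cdots\sum_{l=l_2}^\infty\Omeh{1}{l_2}$ shrinks to the double sum $\sum_{l_2=p+1}^\infty\Omega_2^4(l_2,m;\theta_2,\theta_2')\sum_{l=l_2}^\infty\Omega_1^4(l,l_2;\theta,\theta')$, since for $d=4$ there are only the two $\Theta$-layers $k=1,2$.

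The next step is to expand these two remaining $\Omega$-functions via \eqref{standpoly} as products of $\Theta$-functions and to substitute the explicit expression \eqref{thetaGeg}. The $\Theta_2^4$ factors contain $C_{l_2-m}^{m+\frac12}(\cos\theta_2)$, which I would convert to Ferrers functions ${\sf P}_{l_2}^m(\cos\theta_2)$ using \eqref{JacobiGeg}; the $\Theta_1^4$ factors contain $C_{l-l_2}^{l_2+1}(\cos\theta)$, which stay as Gegenbauer polynomials. This is precisely the manipulation already recorded in \eqref{d4omegaF}: it produces the factor $\frac1\pi$, the weight $\frac{2^{2l_2}(l_2!)^2(2l_2+1)(l_2-m)!}{(l_2+m)!}(\sin\theta\sin\theta')^{l_2}$ attached to the outer ($l_2$) sum, and the weight $\frac{(l+1)(l-l_2)!}{(l+l_2+1)!}$ attached to the inner ($l$) sum.

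Finally I would feed this into the $d=4$ form of \eqref{addt7.5}: the constant $-1$ coming from $(-1)^{d/2-1}$, the $2\pi$ from $(2\pi)^{d/2-1}$, and the $\frac1\pi$ from the $\Omega$-collapse combine to the stated overall constant $-2$, while all remaining constants, the factor $\frac{(p+m)!}{(m-p-1)!}$, the $\left(\frac{rr'}{RR'}\right)^p$, and the factorial weight $\frac{(l-p-1)!}{(p+l+2)!}$ on $Q_{l-\frac12}^{p+\frac32}(\zeta)$ fall directly into place. No step here is more than routine bookkeeping of Pochhammer and factorial factors; the only point worth a moment's attention—and the closest thing to an obstacle—is confirming that the lower limit $l=l_2$ of the inner sum inherited from Theorem~\ref{sphereExpandmgp} is the correct one in the regime $m\ge p+1$, which holds because the admissibility chain $p+1\le m\le l_2\le l$ forces $\max(l_2,p+1)=l_2$, so no truncation is lost in the specialization.
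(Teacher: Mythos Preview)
Your proposal is correct and follows essentially the same approach as the paper: specialize \eqref{addt7.5} to $d=4$ and collapse the $\Omega$-product via \eqref{d4omegaF}, exactly as in the proof of Theorem~\ref{binomial2}. Your extra remark on the inner lower limit $l=l_2$ is a nice sanity check but not needed, since Theorem~\ref{sphereExpandmgp} already hands you that limit directly.
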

\begin{proof}
Start with 
\eqref{addt7.5} and set $d=4$.  The $\Omega$-functions can be simplified as in \eqref{d4omegaF}, which completes 
the proof.
\end{proof}
}
\subsection{Addition theorems in generalized Hopf coordinates}


In generalized Hopf coordinates, one has the following 
{binomial} addition theorem valid for $0\le m \le p$.

\begin{thm}
\label{baHopf}
Let $p,m\in\N_0$, $0\le m\le p$, $d=2^q$, $q\ge 2$.
Then
\begin{eqnarray}
&&\hspace{-1cm}Q_{m-\frac12}^{p+\frac12}(\chi)=(-1)^{m+\fdt-1}2^{\fdt-1}(p-m)!(p+m)!(\chi^2-1)^{-\frac{p}{2}-\frac14}
\rrRRp \pdh\nonumber\\[0.2cm]
&&\hspace{-0.5cm}\times \sum_{m_2=0}^{p-m}\epsilon_{m_2}\cos(m_2(\phi_2-\phi_2'))
\cdots
\sum_{m_k=0}^{p-\sum_{i=1}^{k-1}m_i}
\cdots 
\sum_{m_\fdt=0}^{p-\sum_{i=1}^{\fdt-1}m_i} \epsilon_{m_\fdt}\cos(m_\fdt(\phi_\fdt-\phi_\fdt'))\nonumber\\[0.2cm]
&&\hspace{-0.5cm}\times(-1)^M\sum_{n_{\fdt-1}=0}^{\left\lfloor\frac{p-M}{2}\right\rfloor} \PsiH{\fdt-1}{\log_2d}{m_2,m}
\cdots
\sum_{n=0}^{\left\lfloor\frac{p-M-2\sum_{i=2}^{\fdt-1}n_i}{2}\right\rfloor} \PsIH{1}{\log_2d}{l_2,l_3}\nonumber\\[0.2cm]
&&\hspace{-0.5cm}\times \frac{1}{(p-2N-M)!(p+2N+M+d-2)!}
Q_{2N+M+\frac{d-3}{2}}^{p+\frac{d-1}{2}}(\zeta).
\end{eqnarray}
\end{thm}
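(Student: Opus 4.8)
The plan is to specialize the general recipe described at the start of Section~\ref{AdditionthmsinVilenkinspolysphericalcoordinates} to the binomial kernel $f_p(\bfx,\bfxp)=\|\bfx-\bfxp\|^{2p}$ with $p=k-d/2\in\N_0$, $d=2^q$. First I would write $f_p$ in two ways. On one hand, Theorem~\ref{bifourier} gives its azimuthal Fourier cosine series in a rotationally-invariant coordinate system; after stripping the common factor $(2RR')^p$, the coefficient of $\cos(m(\phi-\phi'))$ is an explicit constant times $Q_{m-\frac12}^{p+\frac12}(\chi)$, and this will become the left-hand side of the asserted identity. On the other hand, Theorem~\ref{bigeg} (equivalently \eqref{binomGeg} with $\mu=d/2-1$) gives the finite Gegenbauer expansion of $f_p$ in a Vilenkin polyspherical coordinate system as a sum over $n$ from $0$ to $p$ of $C_n^{d/2-1}(\cos\gamma)\,Q_{n+\frac{d-3}{2}}^{p+\frac{d-1}{2}}(\zeta)$ times explicit constants. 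Equating these two representations of $f_p$, after cancelling $(2rr')^p$ against $(2RR')^p$, is exactly \eqref{binomaddthmform}.

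Next I would expand the critical Gegenbauer polynomial $C_n^{d/2-1}(\cos\gamma)$ via the addition theorem for hyperspherical harmonics \eqref{addthmhypsph}, and then replace the product $Y_n^K(\bfx)\overline{Y_n^K(\bfxp)}$ by the explicit generalized-Hopf expression furnished by Theorem~\ref{hopfaddthm}. After this substitution the right-hand side is a multi-sum over $n=l$ together with all meridional quantum numbers $l_2,\dots,l_{d/2-1}$ and azimuthal quantum numbers $m,m_2,\dots,m_{d/2}$, subject to the Vilenkin branching constraints and to the parity conditions making the surrogates $n_k=(l_k-l_{2k}-l_{2k+1})/2$ nonnegative integers; the factor $\epsilon_m\cos(m(\phi-\phi'))$ appears explicitly but with $m$ sitting as an innermost index. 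The invariance of each $\Psi_k^{\log_2 d}$ under $m_j\mapsto-m_j$ (Theorem~\ref{hopfaddthm}, via \eqref{Jacobinega}--\eqref{Jacobinegb}) is what legitimizes having already collapsed the complex exponentials to cosines.

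The key step is then to reverse the order of summation so that $m$ becomes outermost and the remaining sums run over the surrogates $n_k$ with $l=l_1=2N+M$; this is precisely Lemma~\ref{sumreversal} (the reversal of ${\sf Z}_1$), whose linear change of variables $\varphi$ turns the chain $0\le m\le l_{d/2-1}\le\cdots\le l_2\le l\le p$ together with the parity conditions into the simplex-type bounds $\sum_{i=1}^{d/2}m_i+2\sum_{i=1}^{d/2-1}n_i\le p$ appearing in the statement. Under $\varphi$ the degree of the Legendre function becomes $n+\mu-\tfrac12=2N+M+\tfrac{d-3}{2}$ while its order stays $p+\mu+\tfrac12=p+\tfrac{d-1}{2}$, and $(-1)^n=(-1)^{2N+M}=(-1)^M$ produces the internal $(-1)^M$. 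Finally I would equate the coefficients of $\cos(m(\phi-\phi'))$ on the two sides and solve for $Q_{m-\frac12}^{p+\frac12}(\chi)$: the outermost $\epsilon_m$ cancels between Theorem~\ref{bifourier} and Theorem~\ref{hopfaddthm} (the inner $\epsilon_{m_j}$ survive, as in the statement), and all remaining prefactors---the sign $(-1)^{m+d/2-1}$, the power $2^{d/2-1}$, the factorials $(p-m)!(p+m)!$, the collapse of $(d/2-2)!$ against the $(d-2)/\Gamma(d/2)$ surviving from \eqref{addthmhypsph} (which, using $n+d/2-1=(2n+d-2)/2$, cancels the $(2n+d-2)$ denominator as well), the power $(\chi^2-1)^{-p/2-1/4}$, and the rewriting of $(\zeta^2-1)$-powers through \eqref{zetasquare} into the $\pdh$ factor---are pure bookkeeping.

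The main obstacle I anticipate is organizational rather than conceptual: checking that Lemma~\ref{sumreversal} reproduces exactly the nested upper limits $\sum_{m_k=0}^{p-\sum_{i<k}m_i}$ and $\sum_{n_k=0}^{\lfloor(p-M-2\sum_{i>k}n_i)/2\rfloor}$ written in the theorem, and that the argument $2N+M+\frac{d-3}{2}$ and order $p+\frac{d-1}{2}$ of the Legendre function line up term-by-term with the summand of Theorem~\ref{bigeg} after the substitution $n=2N+M$, with no boundary terms lost when $m$ is pulled outside. Once the constants are tracked the remaining verification is routine.
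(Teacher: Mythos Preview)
Your proposal is correct and follows essentially the same route as the paper: start from Theorems~\ref{bifourier} and~\ref{bigeg}, insert the addition theorem \eqref{addthmhypsph} in the Hopf form of Theorem~\ref{hopfaddthm}, reverse the multi-sum via Lemma~\ref{sumreversal}, and compare Fourier coefficients. One small slip: the constraint region in generalized Hopf coordinates is the tree-type system $l_k\ge l_{2k}+l_{2k+1}$ of Lemma~\ref{sumreversal}, not the linear chain $m\le l_{d-2}\le\cdots\le l$ you wrote (that is the standard-polyspherical case of Lemma~\ref{multisum}); since you invoke Lemma~\ref{sumreversal} and the surrogates $n_k$ correctly, this is only a wording issue and does not affect the argument.
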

\begin{proof}
{Start with the Gegenbauer expansion of a fundamental solution given by Theorem \ref{bigeg}. The hyperspherical harmonics contained within can be expanded with the aid of \eqref{addthmhypsph} and Theorem \ref{hopfaddthm} to be
\begin{eqnarray} \label{batgegharm}
&& \hspace{-0.5cm} C_l^{d/2-1}(\cos\gamma) = \frac{(d-2)}
{(2l+d-2)\Gamma(d/2)} \sum_{K} 
\epsilon_m\cos(m(\phi-\phi'))
\epsilon_{m_2}\cos(m_2(\phi_{m_2}-\phi_{m_2}'))\cdots
\epsilon_{m_\fdt}\cos(m_\fdt(\phi_\fdt-\phi_\fdt'))\nonumber\\
&&
\hspace{3cm}\times\PsiH{\fdt-1}{\log_2d}{m_2,m}
\times\cdots\times
\PsIH{1}{\log_2d}{l_2,l_3}.
\end{eqnarray}
The multi-sum in  \eqref{batgegharm} can be combined with the sum in \eqref{bigeg}, and with the ability to reverse this multi-sum given by Lemma \ref{sumreversal} yields: 
\begin{eqnarray}
&& \hspace{-0.5cm}\hii_k^d(\bfx,\bfxp)= \frac{2^{\frac{d}{2}-\frac12}}{\sqrt{\pi}}\expe^{i\pi(p-\frac{d}{2}+\frac12)}p! (2rr)^p \pdh  \sum_{m=0}^{p-m}\epsilon_{m}\cos(m(\phi-\phi')) \nonumber \\
&&\hspace{0.5cm}\times  \sum_{m_2=0}^{p-m}\epsilon_{m_2}\cos(m_2(\phi_2-\phi_2'))
\cdots
\sum_{m_k=0}^{p-\sum_{i=1}^{k-1}m_i}
\cdots 
\sum_{m_\fdt=0}^{p-\sum_{i=1}^{\fdt-1}m_i} \epsilon_{m_\fdt}\cos(m_\fdt(\phi_\fdt-\phi_\fdt'))\nonumber\\
&&\hspace{1.5cm}\times(-1)^M\sum_{n_{\fdt-1}=0}^{\left\lfloor\frac{p-M}{2}\right\rfloor} \PsiH{\fdt-1}{\log_2d}{m_2,m}
\cdots
\sum_{n=0}^{\left\lfloor\frac{p-M-2\sum_{i=2}^{\fdt-1}n_i}{2}\right\rfloor} \PsIH{1}{\log_2d}{l_2,l_3}\nonumber\\
&& \hspace{1.5cm} \times \frac{1}{(p-2N-M)!(2N + M + p + d -2)!}Q_{l+\frac{d-3}{2}}^{p+\frac{d-1}{2}}(\zeta).
\end{eqnarray}
The Fourier coefficients of this can be compared with the Fourier expansion of a fundamental solution given in Theorem \ref{bifourier}.  This completes the proof
}
\end{proof}


\noindent The simplest example of a {binomial addition theorem for $0 \le m\le p$ in generalized Hopf coordinates occurs in four dimensions}.

\begin{thm} \label{biHopf}
Let $p,m\in\N_0$, $0\le m\le p$, $p=k-d/2\in\N_0$ and $d=4$.
Then
\begin{eqnarray}
&&\hspace{-1cm}Q_{m-\frac12}^{p+\frac12}(\chi)=-2(p-m)!(p+m)!(\chi^2-1)^{-\frac{p}{2}-\frac14}
\rrRRp \pth (\sin\vartheta\sin\vartheta')^{m}
\nonumber\\[0.2cm]
&&\hspace{-0.5cm}\times \sum_{m_2=0}^{p-m}\epsilon_{m_2}\cos(m_2(\phi_2-\phi_2'))
(-1)^{m_2}
(\cos\vartheta\cos\vartheta')^{m_2}
\sum_{n=0}^{\left\lfloor\frac{p-m-m_2}{2}\right\rfloor}
P_n^{(m,m_2)}(\cos(2\vartheta))
P_n^{(m,m_2)}(\cos(2\vartheta'))
\nonumber\\[0.2cm]
&&\hspace{-0.5cm}\times
\frac{(2n+m+m_2+1)(n+m+m_2)!n!
}
{(p-2n-m-m_2)!(p+2n+m+m_2+2)!(n+m)!(n+m_2)!}
Q_{2n+m+m_2+\frac12}^{p+\frac32}(\zeta).
\end{eqnarray}
\end{thm}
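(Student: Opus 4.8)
The plan is to derive Theorem~\ref{biHopf} as the four-dimensional ($q=2$, $d=4$) specialization of Theorem~\ref{baHopf}; no new analytic input is required, and the work reduces to evaluating the generalized-Hopf eigenfunctions in four dimensions together with sign and factorial bookkeeping. I would begin by setting $d=4$ in Theorem~\ref{baHopf}. Then $\fdt-1=1$, so the multi-sum over the surrogate quantum numbers $\mcN$ collapses to the single sum over $n=n_1$ with upper limit $\big\lfloor\tfrac{p-M}{2}\big\rfloor$, where $M=\sum_{j=1}^{2}m_j=m+m_2$ and $N=n_1=n$; hence $l_1=2N+M=2n+m+m_2$, the Gegenbauer degree parameter $2N+M+\tfrac{d-3}{2}$ becomes $2n+m+m_2+\tfrac12$, and the reciprocal factorial becomes $1/[(p-2n-m-m_2)!\,(p+2n+m+m_2+2)!]$. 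Likewise the multi-sum over azimuthal quantum numbers beyond $m$ reduces to $\sum_{m_2=0}^{p-m}\epsilon_{m_2}\cos(m_2(\phi_2-\phi_2'))$, and the product $\PsiH{\fdt-1}{\log_2d}{m_2,m}\cdots\PsIH{1}{\log_2d}{l_2,l_3}$ collapses to the single factor $\Psi_1^2$.

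Next I would evaluate $\Psi_1^2$ explicitly from \eqref{UpsDef} and \eqref{psi2ups}. For $d=4$ and $k=1$ one has $2^{\log_2(d/4)-\lfloor\log_2 1\rfloor}=2^{0}=1$, so $\alpha=\alpha_1^4(l_2)=l_2$ and $\beta=\beta_1^4(l_3)=l_3$; moreover, by the $l_2$- and $l_3$-relations recorded just above Remark~\ref{upperMremark} (with $d/4=1$ and $d/2=2$) one gets $l_2=\sum_{j=2}^{2}m_j=m_2$ and $l_3=\sum_{j=1}^{1}m_j=m$. Substituting $\vartheta_1=\vartheta$, $n_1=n$, $\alpha=m_2$, $\beta=m$ into \eqref{UpsDef} yields
\[
\Upsilon_1^{2}=\sqrt{\frac{(2n+m+m_2+1)(n+m+m_2)!\,n!}{(n+m)!\,(n+m_2)!}}\,(\cos\vartheta)^{m_2}(\sin\vartheta)^{m}\,P_n^{(m,m_2)}(\cos(2\vartheta)),
\]
and then $\Psi_1^2=\Upsilon_1^2(\vartheta)\,\Upsilon_1^2(\vartheta')$ squares the radical into a rational prefactor and produces $(\cos\vartheta\cos\vartheta')^{m_2}(\sin\vartheta\sin\vartheta')^{m}P_n^{(m,m_2)}(\cos(2\vartheta))P_n^{(m,m_2)}(\cos(2\vartheta'))$.

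It then remains to assemble the pieces and tidy the constants. The factor $(\chi^2-1)^{-\frac p2-\frac14}\rrRRp$ and the factor $(p-m)!(p+m)!$ carry over verbatim; $\pdh$ becomes $\pth$ since $p+\tfrac{d-1}{2}=p+\tfrac32$; and $Q_{2N+M+\frac{d-3}{2}}^{p+\frac{d-1}{2}}(\zeta)=Q_{2n+m+m_2+\frac12}^{p+\frac32}(\zeta)$. The overall sign is $(-1)^{m+\fdt-1}(-1)^M=(-1)^{m+1}(-1)^{m+m_2}=(-1)^{m_2+1}$, which together with $2^{\fdt-1}=2$ gives the stated leading $-2$ once the factor $(-1)^{m_2}$ is pulled into the $m_2$-sum; pulling $(\sin\vartheta\sin\vartheta')^{m}$ out in front of that sum, and combining the $(n+m)!(n+m_2)!$ from the denominator of $\Psi_1^2$ with the remaining reciprocal factorials, reproduces the displayed identity exactly.

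The only step requiring genuine care---hence the main obstacle---is the parameter identification: making sure that in four dimensions the Jacobi indices enter as $(\alpha,\beta)=(m_2,m)$ in the order dictated by \eqref{UpsDef} (which contains $P_n^{(\beta,\alpha)}$, thus producing $P_n^{(m,m_2)}$), that $l_2=m_2$ and $l_3=m$ and not the reverse, and that the surrogate relation $l=2n+m+m_2$ is applied consistently in the Gegenbauer order, in the factorials, and in the sum bounds. Once these identifications are fixed, everything else is routine algebraic simplification.
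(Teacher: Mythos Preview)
Your proposal is correct and follows essentially the same approach as the paper: specialize Theorem~\ref{baHopf} to $d=4$, then evaluate the single surviving factor $\Psi_1^2$ via \eqref{UpsDef} and \eqref{psi2ups} to produce the Jacobi-polynomial expression, and finally collect constants. Your treatment is in fact more explicit than the paper's, which simply records the intermediate $d=4$ specialization and the formula for $\Psi_1^2$ without spelling out the parameter identifications $(\alpha,\beta)=(m_2,m)$, $l_2=m_2$, $l_3=m$, or the sign reduction $(-1)^{m+\fdt-1}(-1)^M=-(-1)^{m_2}$ that you carefully verify.
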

\begin{proof}
{Starting with Theorem \ref{baHopf}, let $d=4$.  From this we obtain the following:
\begin{eqnarray} \label{batd4}
&&\hspace{-1cm}Q_{m-\frac12}^{p+\frac12}(\chi)=-2(p-m)!(p+m)!(\chi^2-1)^{-\frac{p}{2}-\frac14}
\rrRRp \pth\nonumber\\[0.2cm]
&&\hspace{-0.5cm}\times \sum_{m_2=0}^{p-m}(-1)^{m_2}\epsilon_{m_2}\cos(m_2(\phi_2-\phi_2')) \sum_{n=0}^{\left\lfloor\frac{p-m-m_2}{2}\right\rfloor}  \Psi_{1}^{2}
\left(
\begin{array}{c} {n} \\ {m,m_2}
\end{array};
\begin{array}{c} {\vartheta} \\[5pt] {\vartheta'}
\end{array}
\right)\nonumber\\[0.2cm]
&&\hspace{0.5cm}\times \frac{1}{(p-2n-m-m_2)!(p+2n+m+m_2+2)!}
Q_{2n+m+m_2+\frac12}^{p+\frac32}(\zeta).
\end{eqnarray}
Then by \eqref{UpsDef}, \eqref{psi2ups}, the function denoted by $\Psi$ can be written in terms of Jacobi polynomials: 
\begin{eqnarray} \label{PsiHopf}
&&\hspace{-1cm} \Psi_{1}^{2}
\left(
\begin{array}{c} {n} \\ {m,m_2}
\end{array};
\begin{array}{c} {\vartheta} \\[5pt] {\vartheta'}
\end{array}
\right)   = \frac{(2n + m + m_2+1)(n+m+m_2)!n!}{(n+m)!(n+m_2)!} ( \sin\vartheta \sin\vartheta' )^m (\cos \vartheta \cos \vartheta')^{m_2}\nonumber \\[0.2cm]
&&\hspace{3.5cm}\times P_n^{(m,m_2)}(\cos(2\vartheta)) P_n^{(m,m_2)}(\cos(2 \vartheta')).
\end{eqnarray}
Using this identity with \eqref{batd4} completes the proof.
}
\end{proof}



In generalized Hopf coordinates, one has the following 
{logarithmic} addition theorem valid for $0\le m \le p$.

\begin{thm} \label{logHopf}
Let $0\le m\le p$, $d=2^q$, $q\ge 2$ and  $p=k-d/2\in\N_0$. Then
\begin{eqnarray}
&&\hspace{-1.0cm} \frac{1}{(p+m)!(p-m)!}
\left(\log(RR')
+\log(\chi+\sqrt{\chi^2-1})+2H_{2p}-H_{p+m}-H_{p-m}
\right)
Q_{m-\frac12}^{p+\frac12}(\chi)\nonumber\\
&&\hspace{-1.0cm} +\frac{1}{(p+m)!}
\sum_{k=0}^{p-m-1}
\frac{(2m+2k+1)
}
{k!(p-m-k)(p+m+k+1)}
\left[1+\frac{k!(p+m)!}{(2m+k)!(p-m)!}\right]
Q_{m-\frac12}^{m+k+\frac12}(\chi)
\nonumber\\
&&\hspace{-1.0cm} +\frac{1}{(p-m)!}
\sum_{k=0}^{m-1}
\frac{2k+1}{(m+k)!(p-k)(p+k+1)}Q_{m-\frac12}^{k+\frac12}(\chi)\nonumber\\
&&\hspace{-1.0cm}=(-1)^{m+\gdt-1}2^{\gdt-1}\left(\frac{rr'}{RR'}\right)^p
(\chi^2-1)^{-\frac{p}2-\frac14}
\left(\frac{r_>^2-r_<^2}{2rr'}\right)^{p+\frac{d-1}2}
\nonumber\\
&&\hspace{-0.5cm}\times
\Biggl\{\ 
\sum_{m_2=0}^{p-m}
\epsilon_{m_2}\cos(m_2(\phi_2-\phi_2'))\cdots
\sum_{m_j=0}^{p-\sum_{i=1}^{j-1}m_i}\epsilon_{m_j}\cos(m_j(\phi_j-\phi_j'))\cdots
\sum_{m_\gdt=0}^{p-\sum_{i=1}^{\gdt-1}m_i}\epsilon_{m_\gdt}\cos(m_\gdt(\phi_\gdt-\phi_\gdt'))
\nonumber\\
&&\hspace{0.0cm}\times
(-1)^M\sum_{n_{\gdt-1}=0}^{\lfloor\frac{p-M}{2}\rfloor}
\PsiH{\gdt-1}{\log_2d}{m_2,m}\cdots
\sum_{n=0}^{\lfloor\frac{p+2n-2N-M}{2}\rfloor}
\PsIH{1}{\log_2d}{l_2,l_3}
\nonumber\\
&&\hspace{0.0cm}\times\biggr(\ 
\frac{2\log r_>+2H_{2p+d-2}+H_p-H_{p+\gdt-1}-H_{p+M+2N+d-2}-H_{p-2N-M}}
{(p-2N-M)!(p+2N+M+d-2)!}
Q_{2N+M+\frac{d-3}2}^{p+\frac{d-1}2}(\zeta)
\nonumber\\
&&\hspace{0.5cm}+\frac{1}{(p+2N+M+d-2)!}
\sum_{k=0}^{p-2N-M-1}
\frac{(4N+2M+2k+d-1)}
{k!(p-k-2N-M)(p+k+2N+M+d-1)}\nonumber\\
&&\hspace{1.0cm}\times 
\left[1+\frac{k!(p+2N+M+d-2)!}{(k+4N+2M+d-2)!(p-2N-M)!}\right]
Q_{2N+M+\frac{d-3}2}^{k+2N+M+\frac{d-1}2}(\zeta)
\nonumber\\
&&\hspace{0.5cm}+\frac{1}{(p-2N-M)!}
\sum_{k=0}^{2N+M+\gdt-2}
\frac{2k+1}{(2N+M+k+\gdt-1)!(p+\gdt-k-1)(p+k+\gdt)}
Q_{2N+M+\frac{d-3}2}^{k+\frac12}(\zeta)\ 
\biggr)
\nonumber\\
&&\hspace{0.0cm}+(-1)^{p+1}
\sum_{m_2=0}^\infty\epsilon_{m_2}\cos(m_2(\phi_2-\phi_2'))\cdots
\sum_{m_\gdt=0}^\infty\epsilon_{m_\gdt}\cos(m_\gdt(\phi_\gdt-\phi_\gdt'))
\nonumber\\
&&\hspace{0.5cm}
\times\sum_{n_{\gdt-1}=0}^\infty
\PsiH{\gdt-1}{\log_2d}{m_2,m}\cdots
\sum_{n_2=0}^\infty
\PsiH{2}{\log_2d}{l_4,l_5}
\nonumber\\
&&\hspace{0.5cm}\times\sum_{n=\max\left(0,\lfloor\frac{p-2N-M}{2}\rfloor+1\right)}^\infty
\frac{(2N+M-p-1)!}
{(2N+M+p+d-2)!}
\PsIH{1}{\log_2d}{l_2,l_3}
Q_{2N+M+\frac{d-3}2}^{p+\frac{d-1}2}(\zeta)
\Biggr\}.
\label{thm13it}
\end{eqnarray}
\end{thm}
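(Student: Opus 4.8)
The plan is to follow the route used for Theorems~\ref{LogPSexpand} and \ref{baHopf}: produce two Fourier expansions in the azimuthal difference $\phi-\phi'$ of the same quantity---one from the rotationally-invariant representation, one from the generalized Hopf representation---and then equate the coefficients of $\cos(m(\phi-\phi'))$ for $0\le m\le p$. The natural starting identity is \eqref{logddthmform}, in which the $\beta_{p,d}$ contributions have already cancelled because they multiply $\|\bfx-\bfxp\|^{2p}$, whose Fourier and Gegenbauer expansions are tied together by the binomial addition theorem (Theorem~\ref{baHopf}).

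First I would expand the left-hand side of \eqref{logddthmform} by applying \eqref{Eulerpolyexp} to $\log(2RR')(\chi-\cos(\phi-\phi'))^p$ and \eqref{limitderivCheby} to $(\chi-\cos(\phi-\phi'))^p\log(\chi-\cos(\phi-\phi'))$, using $T_n(\cos(\phi-\phi'))=\cos(n(\phi-\phi'))$. Merging the $(\chi-\cos(\phi-\phi'))^p$-piece of \eqref{limitderivCheby} (which carries the factor $\log\tfrac{\chi+\sqrt{\chi^2-1}}{2}+2H_{2p}$) with the $\log(2RR')$ term and with the first Chebyshev sum of \eqref{limitderivCheby}, and using $\log(2RR')+\log\tfrac{\chi+\sqrt{\chi^2-1}}{2}=\log(RR')+\log(\chi+\sqrt{\chi^2-1})$, produces---after dividing out the common factor $i(-1)^{p+1}\sqrt{2/\pi}\,p!\,(\chi^2-1)^{p/2+1/4}\epsilon_m(-1)^m$---exactly the leading term on the left-hand side of \eqref{thm13it}; the remaining two Chebyshev sums of \eqref{limitderivCheby} (those carrying $Q_{m-\frac12}^{m+k+\frac12}(\chi)$ and $Q_{m-\frac12}^{k+\frac12}(\chi)$) supply the other two left-hand terms.

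The core step is the right-hand side of \eqref{logddthmform}. I would expand $(\zeta-\cos\gamma)^p$ by \eqref{binomGeg} and $(\zeta-\cos\gamma)^p\log(\zeta-\cos\gamma)$ by \eqref{limitderivGeg}, both with $\mu=\tfrac d2-1$, so that $H_{2p+2\mu}=H_{2p+d-2}$, $H_{p+\mu}=H_{p+\fdt-1}$, and the inner $k$-ranges read $0\le k\le p-l-1$ and $0\le k\le l+\fdt-2$. Then I would replace every Gegenbauer polynomial $C_l^{d/2-1}(\cos\gamma)$ by its expansion in generalized Hopf harmonics via \eqref{addthmhypsph} and Theorem~\ref{hopfaddthm} (as in the proof of Theorem~\ref{baHopf}); the invariance of the $\Psi$-functions \eqref{psi2ups} under $m_j\mapsto-m_j$ guaranteed by Theorem~\ref{hopfaddthm} is what converts the complex-exponential sums over $\Z$ into the factors $\epsilon_{m_j}\cos(m_j(\phi_j-\phi_j'))$. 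The resulting nested sums over $\mcL\cup\mcM$ are then reordered: Lemma~\ref{sumreversal} for the four finite multi-sums (those with $0\le l\le p$, turning their inner bounds into $\lfloor\cdot/2\rfloor$ bounds on the $n_k$'s), and Lemma~\ref{sumHopfinfty} for the single infinite multi-sum (that with $l\ge p+1$), keeping only its $0\le m\le p$ portion with the $\max$-truncated innermost $n$-sum; the $m\ge p+1$ portion, Lemma~\ref{multisumZ3}, belongs to the complementary theorem. Under the relabelling $l=l_1=2N+M$ one has $(p+l+2\mu)!=(p+2N+M+d-2)!$, $(k+2l+2\mu)!=(k+4N+2M+d-2)!$, $H_{p+l+2\mu}=H_{p+2N+M+d-2}$, $H_{p-l}=H_{p-2N-M}$. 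Equating the coefficient of $\cos(m(\phi-\phi'))$ on both sides, cancelling $i\,p!$, collecting $(\chi^2-1)^{-p/2-1/4}$, $(rr'/RR')^p$ and $(\zeta^2-1)^{p/2+(d-1)/4}=\pdh$ via \eqref{zetasquare}, and using $\log(rr')+\log(\zeta+\sqrt{\zeta^2-1})=2\log r_>$ (since $\zeta+\sqrt{\zeta^2-1}=r_>/r_<$) yields \eqref{thm13it}.

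The main obstacle is the multi-sum bookkeeping. One must carry the degeneracy constraint $n_k=\tfrac12(l_k-l_{2k}-l_{2k+1})\in\N_0$ correctly through Lemmas~\ref{sumreversal} and \ref{sumHopfinfty}, and then verify that after $l\mapsto 2N+M$ every harmonic number, every factorial, and every inner $k$-summation range issuing from the five contributions of \eqref{limitderivGeg} matches the corresponding expression in \eqref{thm13it}; the infinite ($l\ge p+1$) contribution, with its $\max$-truncated innermost sum and its $(-1)^{p+1}$ sign, is the most delicate piece. Keeping the overall constant consistent relies on the identity $\Gamma(\tfrac d2-1)\,(l+\tfrac d2-1)\,\frac{d-2}{(2l+d-2)\,\Gamma(d/2)}=1$, which simultaneously absorbs the $\Gamma(\mu)$ of \eqref{binomGeg}, the degree factor $(l+\mu)$ and factorial $(\mu-1)!$ appearing in \eqref{limitderivGeg}, and the normalization constant of \eqref{addthmhypsph}.
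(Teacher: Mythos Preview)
Your proposal is correct and follows essentially the same route as the paper's proof: compare the azimuthal Fourier expansion (Theorem~\ref{fourierlog}, built from \eqref{Eulerpolyexp} and \eqref{limitderivCheby}) with the Gegenbauer expansion (Theorem~\ref{gegexpand}, built from \eqref{binomGeg} and \eqref{limitderivGeg}) specialized to $\mu=\tfrac d2-1$, expand the Gegenbauer polynomials in generalized Hopf harmonics via \eqref{addthmhypsph} and Theorem~\ref{hopfaddthm}, reorder the multi-sums with Lemmas~\ref{sumreversal} and \ref{sumHopfinfty}, substitute $l=2N+M$, and use \eqref{zetasquare} together with $\log(rr')+\log(\zeta+\sqrt{\zeta^2-1})=2\log r_>$. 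Your write-up is in fact more explicit than the paper's about the constant bookkeeping and the harmonic-number and factorial relabellings.
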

\begin{proof}
{
Again, we start by comparing the Fourier and Gegenbauer expansions of the logarithmic fundamental solutions.  For the Gegenbauer expansion, replace $\mu$ with $\frac{d}{2}-1$, use \eqref{zetasquare} and the following relation:
\begin{equation*}
    \log(r r') + \log \left( \zeta + \sqrt{ \zeta^2-1} \right) = 2 \log r_>.
\end{equation*}
The Gegenbauer expansion of Theorem \ref{gegexpand} can be rewritten in Hopf coordinates by use of \eqref{batgegharm}.
Next, we consider the multi-sums. The first sums of $\sum_{l=0}^p \sum_K$, are dealt with by Lemma \ref{sumreversal}. The later sum uses Lemma \ref{sumHopfinfty}.  Using the identity $l = 2N+M$ and compare the Fourier coefficients to complete the proof.
}
\end{proof}


\noindent The simplest example of a {logarithmic addition theorem for $0 \le m\le p$ in generalized Hopf coordinates occurs in four dimensions}.

\begin{cor}
Let $0\le m\le p$, $m\in\N_0$, $p=k-d/2\in\N_0$ where $d=4$. Then
\begin{eqnarray*}
&&\hspace{-1.5cm} \frac{1}{(p+m)!(p-m)!}
\left(\log(RR')
+\log(\chi+\sqrt{\chi^2-1})+2H_{2p}-H_{p+m}-H_{p-m}
\right)
Q_{m-\frac12}^{p+\frac12}(\chi)\\
&&\hspace{-1.25cm} +\frac{1}{(p+m)!}
\sum_{k=0}^{p-m-1}
\frac{(2m+2k+1)
}
{k!(p-m-k)(p+m+k+1)}
\left[1+\frac{k!(p+m)!}{(2m+k)!(p-m)!}\right]
Q_{m-\frac12}^{m+k+\frac12}(\chi)
\\
&&\hspace{-1.25cm} +\frac{1}{(p-m)!}
\sum_{k=0}^{m-1}
\frac{2k+1}{(m+k)!(p-k)(p+k+1)}Q_{m-\frac12}^{k+\frac12}(\chi)\\
&&\hspace{-1.0cm}=-2\left(\frac{rr'}{RR'}\right)^p(\chi^2-1)^{-\frac{p}2-\frac14}
\left(\frac{r_>^2-r_<^2}{2rr'}\right)^{p+\frac32}(\sin\vartheta\sin\vartheta')^m
\\
&&\hspace{-0.5cm}\times
\Biggl\{\ \sum_{m_2=0}^{p-m}
\epsilon_{m_2}\cos(m_2(\phi_2-\phi_2'))(\cos\vartheta\cos\vartheta')^{m_2}(-1)^{m_2}
\\
&&\hspace{0.0cm}\times\sum_{n=0}^{\lfloor\frac{p-m-m_2}{2}\rfloor}
\frac{(2n+m+m_2+1)(n+m+m_2)!n!}{(n+m)!(n+m_2)!}
P_n^{(m,m_2)}(\cos(2\vartheta))
P_n^{(m,m_2)}(\cos(2\vartheta'))
\\
&&\hspace{0.0cm}\times\biggr(\ 
\frac{2\log r_>+2H_{2p+2}+H_p-H_{p+1}-H_{p+2n+m+m_2+2}-H_{p-2n-m-m_2}}
{(p-2n-m-m_2)!(p+2n+m+m_2+2)!}
Q_{2n+m+m_2+\frac12}^{p+\frac32}(\zeta)
\\
&&\hspace{0.5cm}+\frac{1}{(p+2n+m+m_2+2)!}
\sum_{k=0}^{p-2n-m-m_2-1}
\frac{(4n+2m+2m_2+2k+3)
}
{k!(p-k-2n-m-m_2)(p+k+2n+m+m_2+3)}\\
&&\hspace{1.0cm}\times 
\left[1+\frac{k!(p+2n+m+m_2+2)!}{(k+4n+2m+2m_2+2)!(p-2n-m-m_2)!}\right]
Q_{2n+m+m_2+\frac12}^{k+2n+m+m_2+\frac32}(\zeta)
\\
&&\hspace{0.5cm}+\frac{1}{(p-2n-m-m_2)!}
\sum_{k=0}^{2n+m+m_2}
\frac{2k+1}{(2n+m+m_2+k+1)!(p-k+1)(p+k+2)}
Q_{2n+m+m_2+\frac12}^{k+\frac12}(\zeta)\ 
\biggr)
\\
&&\hspace{0.0cm}+(-1)^{p+m+1}\sum_{m_2=0}^\infty\epsilon_{m_2}\cos(m_2(\phi_2-\phi_2'))
(\cos\vartheta\cos\vartheta')^{m_2}
\\
&&\hspace{0.5cm}\times\sum_{n=\max\left(0,\lfloor\frac{p-m-m_2}{2}\rfloor+1\right)}^\infty
\frac{(2n+m+m_2+1)(n+m+m_2)!(2n+m+m_2-p-1)!n!}
{(n+m)!(n+m_2)!(2n+m+m_2+p+2)!}\\
&&\hspace{4.0cm}\times P_n^{(m,m_2)}(\cos(2\vartheta))
P_n^{(m,m_2)}(\cos(2\vartheta'))
Q_{2n+m+m_2+\frac12}^{p+\frac32}(\zeta)
\Biggr\}.
\end{eqnarray*}
\end{cor}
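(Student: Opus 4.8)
The plan is to specialize Theorem~\ref{logHopf} to $d=4$ and then evaluate the single surviving $\Psi$-factor explicitly, exactly paralleling the passage from Theorem~\ref{baHopf} to Theorem~\ref{biHopf} in the binomial case. When $d=4$ one has $\gdt=2$, so $\gdt-1=1$: the nested products over the meridional surrogate quantum numbers $n_k$ in \eqref{thm13it} collapse to the single sum over $n=n_1=N$, the nested products over the azimuthal quantum numbers $m_j$ for $2\le j\le\gdt$ collapse to the single sum over $m_2$, so that $M=\sum_{j=1}^{\gdt}m_j=m+m_2$, and in the infinite block the sum $\sum_{n_2}\PsiH{2}{\log_2d}{l_4,l_5}$ is empty. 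Consequently the index $2N+M$ that appears throughout the Legendre orders, harmonic-number subscripts and factorials becomes $2n+m+m_2$, the only remaining $\Psi$-factor is $\PsIH{1}{\log_2 4}{l_2,l_3}$, and with $d-2=2$ one has $2H_{2p+d-2}=2H_{2p+2}$, $H_{p+\gdt-1}=H_{p+1}$, and so on.

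First I would substitute $d=4$ directly into \eqref{thm13it}, carrying out this bookkeeping: the finite block becomes a double sum over $m_2$ (from $0$ to $p-m$) and $n$ (from $0$ to $\lfloor(p-m-m_2)/2\rfloor$), while the infinite block becomes a double sum over $m_2\in\N_0$ and $n\ge\max(0,\lfloor(p-m-m_2)/2\rfloor+1)$, the three inner $k$-sums taking the forms displayed in the corollary. Next I would replace $\PsIH{1}{\log_2 4}{l_2,l_3}$ by its explicit Jacobi-polynomial form; by \eqref{UpsDef} and \eqref{psi2ups}, and as already recorded in \eqref{PsiHopf}, this factor equals
\[
\frac{(2n+m+m_2+1)(n+m+m_2)!\,n!}{(n+m)!\,(n+m_2)!}\,(\sin\vartheta\sin\vartheta')^{m}(\cos\vartheta\cos\vartheta')^{m_2}\,P_n^{(m,m_2)}(\cos(2\vartheta))\,P_n^{(m,m_2)}(\cos(2\vartheta')),
\]
the identification of the Jacobi parameters $\alpha=\alpha_1^4(l_2)=m_2$, $\beta=\beta_1^4(l_3)=m$ being forced by Remark~\ref{upperMremark}. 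Substituting this, pulling $(\sin\vartheta\sin\vartheta')^{m}$ in front of the whole brace and $(\cos\vartheta\cos\vartheta')^{m_2}$ out of the $n$-sum, and combining the constant $(-1)^{m+\gdt-1}2^{\gdt-1}=-2(-1)^{m}$ of Theorem~\ref{logHopf} with $(-1)^{M}=(-1)^{m}(-1)^{m_2}$ in the finite block (giving $-2(-1)^{m_2}$) and with the infinite block's $(-1)^{p+1}$ (giving $-2(-1)^{p+m+1}$) yields precisely the stated right-hand side.

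There is no essential obstacle here beyond careful bookkeeping. The ranges of the collapsed multi-sums were already established in the proof of Theorem~\ref{logHopf} through Lemmas~\ref{sumreversal} and~\ref{sumHopfinfty}, so only three points require attention: the correct evaluation of $\alpha_1^4$ and $\beta_1^4$ in \eqref{UpsDef} at $d=4$, the collapse of the nested products to a single $m_2$-sum and a single $n$-sum, and tracking the three layers of signs so that the explicit $(-1)^{m_2}$ in the finite block and the $(-1)^{p+m+1}$ in the infinite block emerge correctly; the remaining manipulations are routine simplifications of Pochhammer symbols and harmonic numbers. This completes the proof.
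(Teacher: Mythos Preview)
Your proposal is correct and follows essentially the same approach as the paper: specialize Theorem~\ref{logHopf} to $d=4$, observe that $N=n$ and $M=m+m_2$, and replace the single surviving $\Psi$-factor by its explicit Jacobi-polynomial form via \eqref{PsiHopf}, exactly as in the passage from Theorem~\ref{baHopf} to Theorem~\ref{biHopf}. Your bookkeeping on the sign tracking is more explicit than the paper's terse proof but matches it precisely.
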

\begin{proof}
{
When $d=4$, note that $N=n$ and $M = m + m_2$.  The functions of the Hopf coordinate system
can be simplified as in Theorem \ref{biHopf}.
}
\end{proof}


\noindent {The general logarithmic addition theorem 
for $m\ge p+1$ in generalized Hopf 
coordinates is given as follows.}

\begin{thm}
Let $p,m\in\N_0$, $m\ge p+1$, $d=2^q$, $q\ge 2$.
Then
\begin{eqnarray}
&&\hspace{-1.25cm}Q_{m-\frac12}^{p+\frac12}(\chi)=(-2)^{\gdt-1}2^{\gdt-1}\frac{(p+m)!}{(m-p-1)!}
(\chi^2-1)^{-\frac{p}{2}-\frac14}
\rrRRp \pdh\nonumber\\[0.2cm]
&&\hspace{-0.5cm}\times \sum_{m_2=0}^{\infty}\epsilon_{m_2}\cos(m_2(\phi_2-\phi_2'))\times
\cdots\times
\sum_{m_\gdt=0}^{\infty} \epsilon_{m_\gdt}\cos(m_\gdt(\phi_\gdt-\phi_\gdt'))
\sum_{n_{\gdt-1}=0}^{\infty} \PsiH{\gdt-1}{\log_2d}{m_2,m}\nonumber\\[0.2cm]
&&\hspace{-0.00cm}\times\cdots\times\sum_{n=0}^{\infty} \PsIH{1}{\log_2d}{l_2,l_3}
\frac{(2N+M-p-1)!}{(p+2N+M+d-2)!}
Q_{2N+M+\frac{d-3}{2}}^{p+\frac{d-1}{2}}(\zeta).
\end{eqnarray}
\end{thm}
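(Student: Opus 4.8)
The argument parallels that of Theorem~\ref{sphereExpandmgp}, carried out in generalized Hopf coordinates rather than in standard polyspherical coordinates. I start by equating the two representations of a logarithmic fundamental solution $\li_k^d$ in a Vilenkin polyspherical coordinate system: the azimuthal Fourier expansion of Theorem~\ref{fourierlog} and the Gegenbauer expansion of Theorem~\ref{gegexpand} with $\mu=\gdt-1$, having first used \eqref{zetasquare} together with $\log(rr')+\log(\zeta+\sqrt{\zeta^2-1})=2\log r_>$ to put the radial and hypertoroidal factors into their final form. Each Gegenbauer polynomial $C_l^{\gdt-1}(\cos\gamma)$ appearing on the right-hand side is then expanded over products of normalized hyperspherical harmonics in generalized Hopf coordinates by combining the addition theorem~\eqref{addthmhypsph} with Theorem~\ref{hopfaddthm}, that is, by~\eqref{batgegharm}.

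The decisive point is that, of the five multi-sums generated by Theorem~\ref{gegexpand}, only the last one --- the sum in which $l$ ranges from $p+1$ to $\infty$ --- can produce Fourier modes with azimuthal quantum number $m\ge p+1$. Indeed, in the first four multi-sums the index $l$ satisfies $0\le l\le p$, and since $l=l_1=2N+M$ with $M=\sum_{j=1}^{\gdt}m_j\ge m_1=m$, one has $m\le l\le p$ there. Hence for $m\ge p+1$ the entire contribution comes from the term
\[
(-1)^{\gdt-1}\sum_{l=p+1}^\infty\frac{(l+\gdt-1)(l-p-1)!\,C_l^{\gdt-1}(\cos\gamma)}{(p+l+d-2)!}\,Q_{l+\frac{d-3}{2}}^{p+\frac{d-1}{2}}(\zeta)
\]
times the overall prefactor of Theorem~\ref{gegexpand}. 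Inserting \eqref{batgegharm} turns this into a multi-sum over the surrogate quantum numbers $\mcN\cup\mcM$ subject only to $l=2N+M\ge p+1$; because $m\ge p+1$ already forces $l=2N+M\ge M\ge m\ge p+1$, this constraint is automatic, so the multi-sum is exactly ${\sf Z}_3$ of Lemma~\ref{multisumZ3}. Reversing it places $m$ as the outermost index, after which all of $m_2,\dots,m_{\gdt}$ and $n_{\gdt-1},\dots,n$ run freely over $\N_0$, and $l$ is replaced throughout by $2N+M$.

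It remains to compare the coefficient of $\cos(m(\phi-\phi'))$ on the two sides for a fixed $m\ge p+1$. On the Fourier side this coefficient is, by Theorem~\ref{fourierlog}, proportional to $\frac{(m-p-1)!}{(p+m)!}Q_{m-\frac12}^{p+\frac12}(\chi)$; on the Gegenbauer side it is the reversed multi-sum just described, with the harmonic factors assembled into the $\Psi$-functions via~\eqref{psi2ups} and the $\zeta$-dependence carried by $Q_{2N+M+\frac{d-3}{2}}^{p+\frac{d-1}{2}}(\zeta)$. Solving for $Q_{m-\frac12}^{p+\frac12}(\chi)$ and collecting the powers of $\chi^2-1$, of $rr'/RR'$, and of $(r_>^2-r_<^2)/(2rr')$ --- the last of these supplied by~\eqref{zetasquare} --- yields the asserted identity. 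The only genuinely laborious step is tracking the numerical prefactors and signs: one must check that the quotient of the prefactors of Theorems~\ref{fourierlog} and~\ref{gegexpand}, multiplied by the constant $\tfrac{2(d-2)\pi^{d/2}}{(2l+d-2)\Gamma(d/2)}$ from~\eqref{addthmhypsph} and by the factor $(l+\gdt-1)$, collapses to the constant $(-2)^{\gdt-1}2^{\gdt-1}$ displayed in the statement; the remaining simplifications are routine.
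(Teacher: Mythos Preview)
Your proof is correct and follows essentially the same route as the paper: equate the azimuthal Fourier expansion (Theorem~\ref{fourierlog}) with the Gegenbauer expansion (Theorem~\ref{gegexpand}) in generalized Hopf coordinates via \eqref{batgegharm}, observe that only the tail sum $l\ge p+1$ contributes when $m\ge p+1$, reverse the multi-sum with Lemma~\ref{multisumZ3}, and compare Fourier coefficients. The paper's own proof is a two-line sketch (``Similar to Theorem~\ref{logHopf}\dots Lemma~\ref{multisumZ3} is needed''), so your version is in fact more explicit; the only cosmetic difference is that you cite Theorem~\ref{sphereExpandmgp} as the template rather than Theorem~\ref{logHopf}, but the substance is identical.
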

\begin{proof}
{Similar to Theorem \ref{logHopf}.  For this case only the Fourier coefficients where $m \ge p+1$ are used, therefore Lemma \ref{multisumZ3} is needed.  Simplifying the expression completes the proof.}
\end{proof}

\noindent The simplest example of a {logarithmic addition theorem for $m\ge p+1$ in generalized Hopf coordinates occurs in four dimensions}.

\begin{thm}
Let $p,m\in\N_0$, $m\ge p+1$.
Then
\begin{eqnarray}
&&\hspace{-1cm}Q_{m-\frac12}^{p+\frac12}(\chi)=-2\frac{(p+m)!}{(m-p-1)!}
(\chi^2-1)^{-\frac{p}{2}-\frac14}
\rrRRp \pth(\sin\vartheta\sin\vartheta')^m\nonumber\\[0.2cm]
&&\hspace{-0.5cm}\times \sum_{m_2=0}^{\infty}\epsilon_{m_2}\cos(m_2(\phi_2-\phi_2'))
(\cos\vartheta\cos\vartheta')^{m_2}
\sum_{n=0}^{\infty}
P_n^{(m,m_2)}(\cos(2\vartheta))
P_n^{(m,m_2)}(\cos(2\vartheta'))
\nonumber\\[0.2cm]
&&\hspace{-0.5cm}\times
\frac{(2n+m+m_2+1)(2n+m+m_2-p-1)!(n+m+m_2)!n! }
{(p+2n+m+m_{{2}}+2)!(n+m)!(n+m_2)!}
Q_{2n+m+m_2+\frac12}^{p+\frac32}(\zeta).
\end{eqnarray}
\end{thm}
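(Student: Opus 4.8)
The plan is to obtain this result as the four-dimensional specialization of the preceding general logarithmic addition theorem for $m\ge p+1$ in generalized Hopf coordinates, in exactly the way the $d=4$ corollaries were obtained from Theorems~\ref{logHopf} and~\ref{baHopf}. Concretely, one sets $d=2^q$ with $q=2$ in that theorem. For $d=4$ the collections $\mcN$ and $\mcM$ of surrogate and azimuthal quantum numbers each contain only a single entry beyond $m$: there is one surrogate number $n=n_1$ and one further azimuthal number $m_2$, so $N=n$ and $M=m+m_2$, and the nested multi-sums $\sum_{m_2=0}^\infty\cdots\sum_{m_{\gdt}=0}^\infty$ and $\sum_{n_{\gdt-1}=0}^\infty\cdots\sum_{n=0}^\infty$ of the general theorem each collapse to a single sum, $\sum_{m_2=0}^\infty$ and $\sum_{n=0}^\infty$. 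One then has $2N+M=2n+m+m_2$, so $Q_{2N+M+\frac{d-3}{2}}^{p+\frac{d-1}{2}}(\zeta)=Q_{2n+m+m_2+\frac12}^{p+\frac32}(\zeta)$ and $\pdh=\pth$, while the numerical prefactor collapses to $-2$.

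The next step is to rewrite the single remaining product of Hopf eigenfunctions in terms of Jacobi polynomials via the definitions~\eqref{UpsDef} and~\eqref{psi2ups}, exactly as in the proof of Theorem~\ref{biHopf}. For $d=4$ one has $\alpha=\alpha_1^4(l_2)=l_2=m_2$ and $\beta=\beta_1^4(l_3)=l_3=m$, so that~\eqref{psi2ups} reduces to the explicit expression~\eqref{PsiHopf}:
\[
\frac{(2n+m+m_2+1)(n+m+m_2)!\,n!}{(n+m)!(n+m_2)!}(\sin\vartheta\sin\vartheta')^m(\cos\vartheta\cos\vartheta')^{m_2}P_n^{(m,m_2)}(\cos(2\vartheta))P_n^{(m,m_2)}(\cos(2\vartheta')).
\]
Substituting this, extracting the factor $(\sin\vartheta\sin\vartheta')^m$ in front of the whole double sum while leaving $(\cos\vartheta\cos\vartheta')^{m_2}$ under the $m_2$-sum, and combining the factorial coefficient $1/(p+2N+M+d-2)!=1/(p+2n+m+m_2+2)!$ with the numerator $(2N+M-p-1)!=(2n+m+m_2-p-1)!$ inherited from the general theorem, delivers precisely the displayed right-hand side.

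The argument is pure bookkeeping; the only points requiring care are the correct pairing of the two azimuthal quantum numbers with $\sin\vartheta$ versus $\cos\vartheta$ — equivalently, the assignment $l_2=m_2$, $l_3=m$ dictated by the reversed azimuthal identification adopted here for generalized Hopf coordinates — and the collection of all signs and powers of $2$, so that no residual $(-1)^{m_2}$ survives and the prefactor of the double sum is exactly $-2$. I do not expect a genuine obstacle: the substantive content is already contained in the general $m\ge p+1$ Hopf theorem together with Lemma~\ref{multisumZ3}, and the statement above merely records its simplest special case. One could alternatively rederive it from scratch by paralleling the proof of Theorem~\ref{logHopf} — comparing the Fourier expansion of Theorem~\ref{fourierlog} with the $\mu=d/2-1=1$, $d=4$ Gegenbauer expansion of Theorem~\ref{gegexpand}, expanding the Gegenbauer polynomials through the Hopf form of~\eqref{addthmhypsph} and Theorem~\ref{hopfaddthm}, and reversing the relevant multi-sum with Lemma~\ref{multisumZ3} before matching the $m\ge p+1$ azimuthal Fourier coefficients — but that is strictly more work than the specialization sketched above.
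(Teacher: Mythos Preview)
Your proposal is correct and follows essentially the same route as the paper: the paper's proof is the single line ``Simplifying as in Theorem~\ref{logHopf} for $d=4$ completes the proof,'' which amounts precisely to specializing the preceding general $m\ge p+1$ Hopf theorem to $d=4$ and expanding the lone $\Psi$-factor via~\eqref{PsiHopf}, exactly as you describe. Your write-up is considerably more detailed than the paper's, but the underlying argument is identical.
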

\begin{proof}
{Simplifying as in Theorem \ref{logHopf}  for $d=4$ completes 
the proof.}
\end{proof}

%
%
%

\subsection*{Acknowledgements}
We would like to thank A.F.M.~Tom ter Elst, Matthew Auger
and Rados{\l}aw Szmytkowski for valuable discussions.  


\def\cprime{$'$} \def\dbar{\leavevmode\hbox to 0pt{\hskip.2ex \accent"16\hss}d}

\end{document}